\documentclass{article}
\usepackage[utf8]{inputenc}
\usepackage{amsfonts,amsmath,amssymb,amsthm}
\usepackage{enumerate}
\usepackage[english]{babel}
\usepackage{dsfont}
\usepackage{hyperref}
\usepackage{xcolor,bm}
\usepackage{mathtools}
\mathtoolsset{showonlyrefs}

\newtheorem{theorem}{Theorem}%meant for continuous numbers
\newtheorem{thm}{Theorem}[section]
\newtheorem{lemma}[thm]{Lemma}
\newtheorem{proposition}[thm]{Proposition}

\newtheorem{hypothesis}[theorem]{Hypothesis}
\newtheorem{Hypothesis}{Hypothesis}
\newtheorem{definition}[thm]{Definition}
\newtheorem{example}[thm]{Example}

\newtheorem{remark}[thm]{Remark}

\def\dim{\noindent \hbox{{\bf Proof.} }}
\def\R{\mathbb R}
\def\N{\mathbb N}

\def\E{\mathbb E}
\def\P{\mathbb P}
\def\Q{\mathbb Q}

\def\cald{{\cal D}}

\def\calg{{\cal G}}
\def\calh{{\cal H}}

\def\call{{\cal L}}
\def\calm{{\cal M}}

\def\calp{{\cal P}}

\def\call{{\cal L}}
\def\cals{{\cal S}}

\def\to{\rightarrow}

\def\dis{\displaystyle}

\def\<{\langle}

\def\>{\rangle}

\def\dis{\displaystyle}

\allowdisplaybreaks

\newtheorem{defn}[thm]{Definition}

\title{A Bismut -Elworthy formula for BSDEs with degenerate noise}
\usepackage{authblk}

\author[1]{Davide ADDONA\thanks{davide.addona@unipr.it}}
\author[2]{Federica MASIERO\thanks{federica.masiero@unimib.it}} 

\affil[1]{Dipartimento di Scienza Matematiche, Fisiche e Infomatiche, Universit\`a di Parma, Parma, Italy}

\affil[2]{Dipartimento di Matematica e Applicazioni, Universit\`a di Milano Bicocca, Milano, Italy}

\date{}

\begin{document}

\maketitle

\begin{abstract}
In this paper we derive a Bismut-Elworthy formula under assumptions weaker than the non degeneracy of the noise. By Bismut-Elworthy formula we mean a gradient type estimate on the transition semigroup of a stochastic differential equation in a possibly infinite dimensional Hilbert space. 
We also consider a nonlinear version of the Bismut formula for a backward stochastic differential equation, in analogy to what is done in \cite{futeBismut}, where a non-degenerate noise is considered.
Our study is motivated by applications to stochastic wave equations and to stochastic damped wave equation. 
\end{abstract}

\section{Introduction}

In this paper we investigate a Bismut-Elworthy type formula for backward stochastic differential equations (BSDEs) in a Markovian framework with possible degenerate noise.
\newline Namely, we consider a stochastic differential equation (SDE) which evolves in a real and separable Hilbert space $H$, of the form
\begin{equation}\label{sde}
\left\{\begin{array}{ll}\displaystyle  dX_t =
AX_tdt+B(t,X_t)dt+G(t)dW_t,&  t\in
[s,T]\subset [0,T],\\[1mm]
X_\tau=x, & \tau\in[0,s],
\end{array}\right.
\end{equation}
where $A:D(A)\subseteq H\to H$ is the infinitesimal generator of a strongly continuous semigroup of linear operators $(e^{tA})_{t\geq 0}$ on $H$,  $B:[0,T]\times H\rightarrow H$, $W=(W_t)_{t\geq0}$ is a cylindrical Wiener process on another real and separable Hilbert space $U$,  and $G:[0,T]\to  \mathcal L(U;H)$.
Our fundamental assumption on the diffusion term $G(t)$ is that there exists  $J\in\mathcal L(U,H)$ such that we can write $G(t)=J\widetilde G(t)$, where  $\widetilde G(t)\in \mathcal L(U)$ is invertible with bounded inverse: this is a weaker hypothesis than the non degeneracy of $G$. Besides this, on the drift $B$ we assume that $B(t,x)=J\bar B(t,x)$,  where $\bar B:[0,T]\times H\to U$.
\newline Under suitable assumptions,we are able to prove an analogous of the classical Bismut-Elworthy formula  (see the seminal papers \cite{B,ElworthyLi}), starting from the case when $B=0$. Namely we show that for every $f\in B_b(H)$, 
\begin{align}
\label{Bismut-intro}
\langle (\nabla_xP_{s,t}f)(x),h\rangle_H
={\E}\left[ f(X_t^{s,x})\int_s^t\langle \widetilde u(\sigma),dW_\sigma\rangle_{U}\right], \qquad x,h\in H, \ 0\leq s< t,
\end{align}
where $(P_{s,t})$ is the evolution operator associated to $X$ and it is given by
$$P_{s,t}f(x)=\mathbb E[f(X_t^{s,x})],\qquad  f\in B_b(H),\ x\in H, \ 0\leq s\leq t,$$ and $\widetilde u\in L^2([0,T];U)$ satisfies, for every $0\leq s<t$, 
\begin{align}
\label{formula_fond_intro}
\int_s^te^{(t-\tau)A} G(\tau)\widetilde u(\tau)d\tau= e^{(t-s)A}h, \quad \mathbb P\textup{-a.s}.
\end{align}
Condition \eqref{formula_fond_intro} is obtained by comparing the G\^ateaux derivative and the Malliavin derivative of $X$ and, in general, it is not obvious that, for every $h\in H$, there exists $\widetilde u$ satisfying \eqref{formula_fond_intro}. If $G$ is invertible (and also depends on $x$), then it is possible to show that such a $\widetilde u$ exists for every $h\in H$, but in this case the construction of $\widetilde u$ explicitly depends on the inverse of $G$, and so it is not easily extendable to the degenerate case. The same situation emerges assuming that $GG^*$ is invertible; indeed, it is possible to prove the existence of $\widetilde u$, see for instance \cite{Wang}, but also in this case the inverse of $GG^*$ explicitly appears in $\widetilde u$. Since in our examples neither $G$ nor $GG^*$ are invertible, these results are not applicable.

%see Section \ref{subsez mall-gradient} for details and precise formulation. 
Formula \eqref{Bismut-intro} leads to the classical estimate $|\langle(\nabla_x P_{s,t}f)(x),h\rangle_H| 
\leq C\|f\|_\infty\|h\|_H$, where $C$ is a positive constant which does not depends either on $f$ or $h$. We refer to \eqref{Bismut-intro} as linear Bismut formula since $u(s,x)= \mathbb E[\phi(X_T^{s,x})]=P_{s,T}[\phi](x)$ is the solution of the linear Kolmogorov equation\begin{equation*}
\left\{
\begin{array}
[c]{l}%
-\frac{\partial v}{\partial s}(\tau,x)=\mathcal L_s v\left(  \tau,x\right),\quad x\in H,\  s\in\left[  0,T\right],\\[2mm]
v(T,x)=\phi\left(  x\right)  , \quad x\in H,
\end{array}
\right.  
\end{equation*} 
%\begin{align*}
% (\call_s f)(x)&=\frac{1}{2}(Tr G(s,x)G^*(s,x) \nabla^2 f)(x)+\langle Ax,\nabla f(x)\rangle\\&\,+\langle B(s,x),\nabla f(x)\rangle.
%\end{equation*}
where $(\mathcal{L}_\tau)$ is at least formally the generator of $(P_{s,\cdot})$
\newline If we turn to semilinear Kolmogorov equations in $H$ of the form
\begin{equation}
\left\{
\begin{array}
[c]{l}%
-\frac{\partial v}{\partial s}(s,x)=\mathcal L_s v\left(  s,x\right)
+\psi\left( s,x,v(s,x),\nabla v(s,x)G(s)   \right), \quad x\in H, \ s\in\left[  0,T\right],
\\[2mm]
v(T,x)=\phi\left(  x\right)  , \qquad h\in H,
\end{array}
\right.  \label{Kolmointro}
\end{equation} 
then it is well known that with $\psi$ Lipschitz continuous in $v$ and in $\nabla v G$, there exists a unique mild solution 
$$u(s,x)= \mathbb E[\phi(X_T^{s,x})+\int_s^T\psi(r,X_r^{s,x},Y_r^{s,x},Z_r^{s,x})\,dr]$$
to \eqref{Kolmointro}. Such a solution $u$ can be defined by means of a system of forward-backward stochastic differential equations (FBSDEs). Namely, we consider a BSDE coupled with an SDE
\begin{equation*}
\left\{\begin{array}{ll}\displaystyle  dX_t =
AX_tdt+B(t,X_t)dt+G(t)dW_t,&  t\in[s,T]\subset [0,T],\\[1mm]
X_s=x, & \tau\in[0,s], \\[1mm]
-dY_t =\psi(t,X_t,Y_t,Z_t)\;dt-Z_t\;dW_t,
& t\in[0,T], \\[1mm]
Y_T=\phi(X_T),
\end{array}\right.
\end{equation*}
and it turns out that the solution of equation \eqref{Kolmointro} can be defined as $u(s,x)=Y^{s,x}_s$. In the present paper we are able to prove a Bismut formula for BSDEs (which we refer to as the \textit{semilinear Bismut formula} because of the connection between BSDEs and semilinear Kolmogorov equations)
\[
\mathbb E[
\langle \nabla_x Y_t^{s,x}, h\rangle_H] = \mathbb{E} \left[ \int_t^T \psi(r, X_r^{s,x}, Y_r^{s,x}, Z_r^{s,x})  \int_s^r\langle\widetilde u(\sigma),dW_\sigma\rangle_U \,dr+ \phi(X_T) \int_s^T\langle \widetilde u(\sigma),\,dW_\sigma\rangle_U \right]
\]
for every $0\leq s<t$ and every $h\in H$ such that \eqref{formula_fond_intro} holds. 
This formula is an extension, to the case of non invertible diffusion $G$ (or $GG^*$), of the semilinear Bismut formula studied in \cite{futeBismut} (or in \cite{Wang}) in the case of invertible diffusion; we consider the case of degenerate diffusion since we are
motivated by the regularizing properties of the transition semigroup of the stochastic wave equations, studied in \cite{MaPr24}, and of the  stochastic damped wave equation, first studied in \cite{AddMasPri23} and later in \cite{AddBig24,AddBig26}. Indeed, the stochastic (damped) wave equation can be reformulated as a stochastic evolution equation with degenerate noise, see \cite{AddBig24,AddBig26,AddMasPri23,MaPr,MaPr24}. To ensure maximum generality, we will present our results in an abstract form and then we apply them to the motivating models.
	
\medskip

Gradient derivative formulae for Wiener functionals have been introduced  in \cite{B} and have been extended in \cite{ElworthyLi}. We also refer to \cite{DP3} where the Bismut-Elworthy-Li formula is presented in the infinite dimensional setting, and \cite{Ce} where it has been studied for reaction-diffusion equations.
\newline In \cite{futeBismut} semilinear version of the Bismut-Elworthy formula is considered, with its applications to semilinear Kolmogorov equations and to stochastic optimal control problems; in \cite{Masiero} and in \cite{AddBandMas} ( in this last paper the Banach space setting is considered) a semilinear Bismut formula is considered when the generator $\psi$ in the BSDE is quadratic with respect to $Z$. 

We mention also  papers concerning the Bismut-Elworthy-Li formulae for stochastic differential equations (SDEs) with jumps, among others \cite{RenZhang}  and \cite{Ta} for the case of Poisson jump processes and Poisson random measures, and \cite{Zhang} for
SDEs driven by $\alpha$ stable processes
Moreover in \cite{RenZhang-arxiv} the semilinear version of the  Bismut-Elworthy-Li formulae for BSDEs with Poisson random measure is studied.

In all the aforementioned literature, the problem is studied under the assumption of non-degeneracy of the noise. As previously noticed, in \cite{Wang} the author weakens the assumption on the diffusion coefficients, requiring a sort of non degeneracy of the covariance operator. However, this approach does not cover the case of stochastic (damped) wave equations which can be reformulated as abstract evolution equations with highly non degenerate noise. Roughly speaking, such equations can be reformulated in a product space for the pair position-velocity and the noise affects only the evolution of the equation for the state, and this gives rise to the degeneracy in the noise coefficient. In \cite{FanRoZha} the authors consider Bismut type formulas for 
BSDEs,  and they are able to consider a wider class of  forward equations than \cite{futeBismut} including also McKean-Vlasov equations, but they don't cover the case of stochastic (damped) wave equations.  We also mention the recent paper \cite{ScaZa} where the authors study a Bismut formula for particular stochastic evolution equations arising in thermodynamics and phase separation models in which the noise has special non degeneracy.

The paper is organized as follows. In Section \ref{Sec:2} we state the main assumptions on the coefficients of the forward equation \eqref{sde}, recall the main ingredients of Malliavin calculus, identify the G\^ateaux and the Malliavin derivative of $X$ and show that it leads to \eqref{formula_fond_intro}. Finally, we prove the linear Bismut formula \eqref{Bismut-intro} and its estimate, which also holds for functions $f$ with polynomial growth. In Section \ref{sec:fbsde}, we introduce the system of forward-backward differential equations and prove the semilinear Bismut formula for the process $Y$. As usual, at first we prove such a result for smooth coefficients and $B=0$, and then we address the general case by Girsanov's theorem and a classical approximating procedure. In Section \ref{sec:pde}, we apply the results obtained in the previous section to identify $Z$ with the directional gradient of $Y$ and to provide existence and uniqueness to the semilinear Kolmogorov equation \eqref{Kolmointro}. Finally, in Sections \ref{sec:exa_wave} and \ref{sec:exa_damped} we show that our abstract results apply to a class of nonlinear stochastic wave equations and of stochastic damped wave equations, respectively.

\section{A general abstract setting-with drift}
\label{Sec:2}
Let us consider a complete probability space $(\Omega, \mathcal F,\mathbb P)$, and let $H$ be a separable Hilbert space. For every $T>0$, every $s\in[0,T)$ and every $x\in H$, we consider the SPDE, evolving in $H$, given by
\begin{align}
\label{eq_nonlin_1}
dX_t^{s,x}=AX_t^{s,x}dt+B(t,X_t^{s,x})dt+G(t)dW_t, \quad t\in[s,T],\  X_s^{s,x}=x \in H,
\end{align}
where $W=(W_t)_{t\geq0}$ is a cylindrical Wiener process on a separable Hilbert space $U$, $A:D(A)\subseteq H\rightarrow H$ generates a strongly continuous semigroup $(e^{tA})_{t\geq0}$, $B:[0,T]\times H\rightarrow H$ and $G:[0,T]\to  \mathcal L(U;H)$.
%belongs to $\mathcal G^{1}(H;\mathcal L(U;H))$, $G$ has linear growth and it is (locally) Lipschitz continuous. 
We consider the natural filtration $(\mathcal F_t)_{t\in[0,T]}$ of $W$, augmented with the $\mathbb P$-null sets of $\mathcal F_T$, which satisfies the usual conditions.

We assume the following hypotheses on the coefficients of \eqref{eq_nonlin_1}.
\begin{hypothesis}
\label{hyp:base}
\begin{itemize}
\item[\rm(i)]
There exist a measurable and bounded function $\bar B:[0,T]\times H\to U$ and $J\in\mathcal L(U;H)$ such that $B(t,x)=J\bar B(t,x)$ for every $(t,x)\in[0,T]\times H$. Further, 
%The drift $B:[0,T]\times H\to H$ is measurable and \textcolor{blue}{bounded} and 
there exist positive constants $C_B$ and $\gamma_B\in[0,1)$ such that 
%\textcolor{red}{ho tolto la crescita lineare di $B$ che chiediamo la limitatezza}
%usiamo mai $\gamma_B$?Fe: no maio lo lascerei per maggiore generalità Da: ok, mettiamo B invece di $\bar B$? semigruppo applicato a }
\begin{align}
%& \|e^{tA}\bar B(\tau,x)\|\leq C_B t^{-\gamma_B}(1+|x|_H), \label{hyp:base1B}\\
& \|e^{tA} B(\tau,x)-e^{tA} B(\tau,y)\|\leq C_B t^{-\gamma_B}|x-y|_H, \label{hyp:base2B}
\end{align}
for every $t\in(0,T]$, every $\tau\in[0,T]$ and every $x,y\in H$.
Moreover, $e^{tA}B(\tau,\cdot)\in\mathcal G^1(H;H)$ for every $t\in(0,T]$ and every $\tau\in[0,T]$. 
\item[\rm(ii)] 
%The diffusion $G:[0,T]\times H\to \mathcal L(U;H)$ is such that for every $u\in U$ the mapping $G(\cdot,\cdot,\cdot)u:[0,T]\times H\to H$ is measurable, $e^{tA}G(\tau,x)\in\mathcal L_2(U;H)$ for every $t\in(0,T]$, every $\tau\in[0,T]$ and every $x\in H$ and there exist positive constants $C_G$ and $\gamma_G\in\left(0,\frac12\right)$ such that
%\begin{align}
%& \|e^{tA}G(\tau,x)\|_{\mathcal L_2(U;H)}\leq C_G t^{-\gamma_G}(1+|x|_H), \label{hyp_base1}\\
%& \|e^{tA}G(\tau,x)-e^{tA}G(\tau,y)\|_{\mathcal L_2(U;H)}\leq C_G t^{-\gamma_G}|x-y|_H, \label{hyp_base2} \\
%& \|G(\tau,x)\|_{\mathcal L(U;H)}\leq C_G(1+|x|_H)    \label{hyp_base3}
%\end{align}
%for every $t\in(0,T]$, every $\tau\in[0,T]$ and every $x,y\in H$.
The diffusion $G$ is such that $e^{tA}G(\tau)\in
\mathcal L_2(U;H)$ 
for every $t\in(0,T]$ and $\tau\in[0,T]$ and there exist positive constants $C_G$ and $\gamma_G\in\left(0,\frac12\right)$
 \begin{equation}
\|e^{tA}G(\tau)\|_{\mathcal L_2(U;H)}\leq C_G t^{-\gamma_G}, \qquad t\in(0,T], \ \tau\in[0,T]
.\label{hyp_base1}
\end{equation}
\end{itemize}
\end{hypothesis}
Under the above assumptions, from \cite[Proposition 3.2]{FuhTes2002} it follows that for every $p\in[2,\infty)$ equation \eqref{eq_nonlin_1} admits a unique mild solution $X^{s,x}\in L^p_{\mathcal P}(\Omega;C([s,T];H))$, where $X^{s,x}=(X_t^{s,x})_{t\in[s,T]}$ is a predictable process with continuous paths in $H$, satisfying, $\mathbb P$-a.s.,
\begin{align}
\label{mild_sol_nonlineare_nodrift}
X_t^{s,x}=e^{(t-s)A}x+\int_s^te^{(t-r)A} B(r,X_r^{s,x})dr+\int_s^te^{(t-r)A} G(r)dW_r, \qquad \forall t\in[s,T],
\end{align}
and
\begin{align}
\label{stima_sup_X}
\mathbb E\sup_{\sigma\in[s,T]}|X^{s,x}_\sigma|^p\leq C(1+|x|)^p,    
\end{align}
where $C>0$ is a constant which depends on $p$, $C_B$, $C_G$, $\gamma_B$, $\gamma_G$, $T$ and $M:=\sup_{t\in [0,T]}\|e^{tA}\|_{\mathcal L(H)}$. \\
We extend the process $X^{s,x}$ in $[0,s]$ by setting $X_t^{s,x}=x$ for every $t\in[0,s]$ and we still denote such an extension by $X^{s,x}$. From \cite[Proposition 3.3]{FuhTes2002} it follows that for every $p\in[2,\infty)$ the mapping $(s,x)\mapsto X^{s,x}$ belongs to $\mathcal G^{0,1}([0,T]\times H;L^p_{\mathcal P}(\Omega;C([0,T];H)))$ and the process $\nabla_xX^{s,x}h$ is the unique mild solution to the first variation equation
\begin{align*}
d\Xi_t=A\Xi_tdt+\nabla_xB(t,X_t^{s,x})\Xi_t^{s,x}dt,
%+\nabla_xG(t,X_t^{s,x})Z_t^{s,x}dW_t,
\quad \forall t\in[s,T], \ \Xi_{t}=h, \quad \forall t\in[0,s],
\end{align*}
i.e., $\nabla_xX^{s,x}h=(\nabla_x X_t^{s,x}h)_{t\in[0,T]}$ satisfies, $\mathbb P$-a.s.,
\begin{align}
\label{mild_solu_first_var}
\left\{
\begin{array}{ll}
\nabla_x X_t^{s,x}h
=  e^{(t-s)A}h+\displaystyle \int_s^te^{(t-\sigma)A}\nabla_x B(\sigma,X_\sigma^{s,x})\nabla_xX_\sigma^{s,x}hd\sigma,  & \forall t\in[s,T], \\
%\qquad\qquad \quad +\displaystyle \int_s^te^{(t-\sigma)A}\nabla_x G(\sigma,X_\sigma^{s,x})\nabla_xX_\sigma^{s,x}hdW_\sigma, &  \forall t\in[s,T], \vspace{1mm} \\
\nabla_x X_t^{s,x}h = h, & \forall t\in[0,s).
\end{array}
\right. 
\end{align}
\subsection{Malliavin calculus}
% Let us consider the space $K=L^2(0,T;U)$ and
Let $K$ be a separable Hilbert space and let $W:K\rightarrow L^2(\Omega,\mathcal F,\mathbb P)$ be an isonormal Gaussian process, i.e., $W(k)$ is a real Gaussian random variable for every $k\in K$ and
\begin{align*}
\mathbb E[W(k_1)W(k_2)]=\langle k_1,k_2\rangle_K, \qquad k_1,k_2\in K.    
\end{align*}
Assume that $\mathcal F$ is generated by $\{W(k):k\in K\}$. It is well-known that the space $\mathcal S$ of functions $F:\Omega\rightarrow \mathbb R$ such that $F=\varphi(W(k_1),\ldots,W(k_n)) $ with $n\in\mathbb N$, $k_1,\ldots,k_n\in K$ and $\varphi\in C^\infty(\R^n)$ has at most polynomial growth, together with its derivatives, is dense in $L^p(\Omega,\mathcal F,\mathbb P)$ for every $p\in[1,\infty)$. 
Further, given a separable Hilbert space $V$, we denote by $\mathcal S(V)$ the vector space generated by functions of the form $F\otimes v$, where $F\in \mathcal S$ and $v\in V$. It is not hard to prove that $\mathcal S(V)$ is dense in $L^p(\Omega,\mathcal F,\mathbb P;V)$ for every $p\in[1,\infty)$. If $V=\R$ then we write $\mathcal S$ instead of $\mathcal S(\R)$.

We introduce the operator $D:\mathcal S(V)\rightarrow L^p(\Omega,\mathcal F,\mathbb P;\mathcal L_2(K;V))$ defined as
\begin{align*}
DF=\sum_{\ell=1}^n\sum_{r=1}^m\frac{\partial \varphi_r}{\partial\xi_\ell}(W(k_1),\ldots,W(k_n))k_\ell\otimes v_r
\end{align*}
on functions $F$ of the form
\begin{align}
\label{vect_smo_fct}
F=\sum_{r=1}^m\varphi_r(W(k_1),\ldots,W(k_n))v_r\in\mathcal S(V),
\end{align}
$n,m\in \mathbb N$, $k_1,\ldots,k_n\in K$, $v_1,\ldots,v_m\in V$ and $\varphi_r\in S$ for every $r=1,\ldots,m$. The operator $D$ is closable in $L^p(\Omega,\mathcal F,\mathbb P; V)$ for every $p\in[1,\infty)$, we still denote by $D$ its closure and we denote by $\mathbb D^{1,p}(V)$ the domain of its closure.

We denote by $\delta:{\rm dom}(\delta)\subseteq L^2(\Omega,\mathcal F,\mathbb P;\mathcal L_2(K;V))\to L^2(\Omega,\mathcal F,\mathbb P;V)$ the adjoint operator of $D$ in $L^2(\Omega, \mathcal F,\mathbb P;V)$, i.e., 
\begin{align*}
{\rm dom}(\delta) & :=
\left\{
\begin{array}{l}
F\in L^2(\Omega,\mathcal F,\mathbb P;\mathcal L_2(K;V)):\exists M\in L^2(\Omega,\mathcal F,\mathbb 
P;V)\textrm{ such that}\vspace{1mm}\\
\displaystyle \int_\Omega \langle F,D\psi\rangle_{\mathcal L_2(K;V)} d\mathbb P=\int_{\Omega} \langle M,\psi\rangle_{V} d\mathbb P, \ \ \ \forall \psi\in \mathbb D^{1,2}(V)
\end{array}
\right\}, \\
\delta F & :=M, \qquad F\in {\rm dom}(\delta).
\end{align*}
Since $D$ is a closed and densely defined operator in $L^2(\Omega,\mathcal F,\mathbb P;V)$, it follows that $\delta$ is a closed and densely defined operator in $L^2(\Omega,\mathcal F,\mathbb P;\mathcal L_2(K;V))$.

% Let us consider the adjoint operator $\delta:{\rm dom}(\delta)\rightarrow L^2(\Omega,\mathcal F,\mathbb P;V)$ of $D$ in $L^2(\Omega,\mathcal F,\mathbb P;V)$. From ...(Bogachev) it follows that $\mathbb D^{1,2}(L_2(K;V))\subseteq {\rm dom}(\delta)$ and $\|\delta u\|_{L^2}\leq \|u\|_{\mathbb D^{1,2}}$.

In the particular situation $K=L^2(0,T;U)$, $V=H$ and $W(k):=\int_0^T\langle k(t), dW_t\rangle _{U}$, where $W$ is the $U$-valued cylindrical Wiener process introduced above, we infer that $DW(k)=k$ for every $k\in K$. In general, for every $F\in \mathbb D^{1,2}(V)$ we get $DF\in L^2(\Omega;\mathcal L_2(K;H))\sim L^2(\Omega\times [0,T];\mathcal L_2(U;H))$. Indeed, if $F\in\mathcal S(H)$ is as in \eqref{vect_smo_fct} and we assume, without loss of generality, that the elements of $\{v_1,\ldots,v_m\}$ are orthonormal in $H$ and the elements of $\{k_1,\ldots,k_n\}$ are orthonormal in $K$, then
\begin{align*}
\|DF\|_{L^2(\Omega;\mathcal L_2(K;H))}^2
= &\mathbb E\sum_{j=1}^\infty\left|\int_0^T\langle DF(s),k_j(s)\rangle_Uds\right|_H^2
= \mathbb E\sum_{j=1}^n\left|\sum_{r=1}^m\frac{\partial \varphi_r}{\partial \xi_j}v_r\int_0^T|k_j(s)|^2_Uds\right|_H^2 \\
= & \sum_{j=1}^n\sum_{r=1}^m\mathbb E\left|\frac{\partial \varphi_r}{\partial \xi_j}\right|^2,
\end{align*}
and, for every orthonormal basis $\{u_\ell:\ell\in \N\}$ of $U$,
\begin{align*}
\|DF\|_{L^2(\Omega\times [0,T];\mathcal L_2(U;H))}^2
= & \mathbb E\int_0^T\sum_{\ell=1}^\infty|\langle DF(s),u_\ell\rangle _U|_H^2ds
= \mathbb E  \int_0^T\sum_{\ell=1}^\infty\left|\sum_{j=1}^n\sum_{r=1}^m\frac{\partial \varphi_r}{\partial \xi_j}v_r\langle k_j(s),u_\ell\rangle_U\right|_H^2ds\\
= & \sum_{r=1}^m\mathbb E \int_0^T\sum_{\ell=1}^\infty\left(\sum_{j=1}^n\frac{\partial \varphi_r}{\partial \xi_j}\langle k_j(s),u_\ell\rangle_U\right)^2ds \\
= & \sum_{r=1}^m\sum_{j,h=1}^n\mathbb E\left|\frac{\partial \varphi_r}{\partial \xi_j}\right|\left|\frac{\partial \varphi_r}{\partial \xi_h}\right|\int_0^T\sum_{\ell=1}^\infty\langle k_j(s),u_\ell\rangle_U\langle k_h(s),u_\ell\rangle_Uds
=  \sum_{r=1}^m\sum_{j=1}^n\mathbb E\left|\frac{\partial \varphi_r}{\partial \xi_j}\right|^2.
\end{align*}
By density, we infer that $\|DF\|_{L^2(\Omega;\mathcal L_2(L^2(0,T;U);H))}=\|DF\|_{L^2(\Omega\times [0,T];\mathcal L_2(U;H))}$ for every $F\in \mathbb D^{1,2}(H)$. We set $D_sF(\omega):=DF(\omega)(s)=DF(\omega,s)$ for every $(\omega,s)\in\Omega\times [0,T]$.

Finally, we introduce the set $\mathbb L^{1,2}(H)$, whose elements are those processes $u\in L^2(\Omega\times [0,T];H)$ such that $u_r\in \mathbb D^{1,2}(H)$ for almost every $r\in[0,T]$ and there exists a measurable version of $D_su_r$ such that
\begin{align*}
\|u\|_{\mathbb L^{1,2}(H)}
=\|u\|_{L^2(\Omega\times [0,T];H)}^2+\mathbb E\int_0^T\int_0^T\|D_su_r\|^2_{\mathcal L_2(U;H)}drds<\infty.
\end{align*}

From \cite[Proposition 3.5]{FuhTes2002} it follows that the solution $X^{s,x}$ of equation \eqref{eq_nonlin_1} belongs to $\in \mathbb L^{1,2}(H)$ and there exists a version of $DX^{s,x}$ such that for every $\tau\in[0,T)$ the process $(D_\tau X_t^{s,x})_{t\in(\tau,T]}$ is predictable with values in $\mathcal L_2(U;H)$. Moreover, $X_t^{s,x}\in \mathbb D^{1,2}(H)$ for every $t\in[0,T]$ and, $\mathbb P$-a.s.,
\begin{align*}
D_\tau X^{s,x}_t
& =  \int_s^te^{(t-r)A}\nabla_x B(r,X^{s,x}_r)D_\tau X^{s,x}_r dr 
+\mathds1_{[s,t]}(\tau)e^{(t-\tau)A}G(\tau),  && 0\leq \tau <t\leq T, \\
D_\tau X_t^{s,x} & =0, && 0\leq  t\leq \tau <T.
\end{align*}

Let us fix $s\in[0,T]$ and consider $s<t\leq T$. Then, for every $\widetilde u\in L^2(\Omega\times [0,T];U)$ we get, $\mathbb P$-a.s.,
\begin{align}
\notag
\langle DX^{s,x}_t,\widetilde u\rangle_{L^2(0,T;U)}
= & \int_0^TD_\tau X_t^{s,x} \widetilde u(\tau) d\tau \\
= & \int_s^te^{(t-r)A}\nabla_x  B(r,X^{s,x}_r)\langle DX^{s,x}_r,\widetilde u\rangle_{L^2(0,T;U)}dr 
%\\&+\int_s^te^{(t-r)A}\nabla_x  G(r,X^{s,x}_r)\langle DX^{s,x}_r,\widetilde u\rangle_{L^2(0,T;U)}dW_r  
+\int_s^te^{(t-\tau)A} G(\tau)\widetilde u(\tau)dr.
\label{mild_sol_mall_der}
\end{align}

\subsection{Identification of gradient and Malliavin derivative}
\label{subsec:mall-gradient}

From now on we assume that $B=0$, and we will come back later to the case of $B$ not identically equal to zero.
\newline Let us fix $h\in H$ and compare \eqref{mild_solu_first_var} and \eqref{mild_sol_mall_der} when $B=0$. If we find a family $(\widetilde u_t)_{t\in(s,T]}\subseteq L^2([0,T];U)$ such that for every $t\in(s,T]$ we get
\begin{align}
\label{link_gat_der_mall_der}
\int_s^te^{(t-\tau)A} G(\tau)\widetilde u_t(\tau)d\tau= e^{(t-s)A}h, \quad \mathbb P\textup{-a.s},
\end{align}
then we deduce that $\nabla_x X^{s,x}_th=\langle DX^{s,x}_t,\widetilde u_t\rangle_{L^2(0,T;U)}$ $\mathbb P$-a.s. for every $t\in(s,T]$. We stress that the family $(\widetilde u_t)_{t\in(s,T]}$ also depends on $s,x$ and $h$.
\newline Let us introduce the semigroup $(P_t)_{t\geq0}\in\mathcal L(B_b(H))$ defined as $(P_tf)(x)=\mathbb E[f(X_t^x)]$ for every $f\in B_b(H)$, every $x\in H$ and every $t\in[0,T]$, where $X^x=(X_t^x)_{t\in[0,T]}$ denotes the unique mild solution to \eqref{mild_sol_nonlineare_nodrift} with $s=0$ and $B=0$. Let $h\in H$ be such that there exists $(\widetilde u_t)_{t\in (0,T]} \subseteq L^2([0,T];U)$ such that \eqref{link_gat_der_mall_der} holds true. Then, $P_tf\in C_b^1(H)$ for every $f\in C^1_b(H)$ and $t\in(0,T]$ and we get
\begin{align}
\langle (\nabla_xP_tf)(x),h\rangle_H
= & \mathbb E[\nabla f(X_t^x)\nabla_xX_t^xh]
= \mathbb E[\nabla f(X_t^x)\langle DX_t^x,\widetilde u_t\rangle_{L^2(0,T;U)}] \notag \\
= & \mathbb E[\langle D(f(X_t^x)),\widetilde u_t\rangle_{L^2(0,T;U)}], \qquad t\in(0,T],
\label{bismut_astr_1}
\end{align}
where we have exploited the fact that $f(X_t^x)\in \mathbb D^{1,2}(H)$ and $D(f(X_t^x))=\nabla f(X_t^x)DX_t^x$. Assume that $\widetilde u_t\in {\rm dom}(\delta)$. Then, integrating by parts in \eqref{bismut_astr_1} it follows that
\begin{align}
\label{int_parit_derivata}
\langle (\nabla_xP_tf)(x),h\rangle_H
= \mathbb E[f(X_t^x)\delta(\widetilde u_t)], \qquad t\in(0,T].
\end{align}
Therefore, if we consider $f\in B_b(H)$ and we pointwise approximate it with a uniformly bounded sequence $(f_n)_{n\in\N}\subseteq C_b^1(H)$, then from \eqref{int_parit_derivata} we deduce that $P_tf$ is differentiable along $h$ and 
\begin{align*}
\langle (\nabla_x P_tf)(x),h\rangle_H=\mathbb E[f(X_t^x)\delta(\widetilde u_t)], \qquad t\in(0,T].
\end{align*}
Finally, for every $t\in(0,T]$ we get
\begin{align}
\label{stima_der_semigruppo}
|\langle (\nabla_x P_tf)(x),h\rangle_H|
=& |\mathbb E[f(X_t^x)\delta(\widetilde u_t)]|
\leq \|f\|_\infty \mathbb  E[|\delta(\widetilde u_t)|]
\leq  \|f\|_\infty (\mathbb  E[|\delta(\widetilde u_t)|^2])^{\frac12}.
\end{align}
We recall that $L^2_{\mathcal P}(\Omega,\mathcal F,\mathbb P;K)\subseteq {\rm dom}(\delta)$, where $L^2_{\mathcal P}(\Omega,\mathcal F,\mathbb P;K)$ is the space of $K$-valued $L^2$-predictable processes,  $\delta(v)=\int_0^T\langle v(t),dW_t\rangle_{U}$ for every $v\in L^2_{\mathcal P}(\Omega,\mathcal F,\mathbb P;K)$ and
\begin{align}
\label{stima_L2_div}
\mathbb E[|\delta(v)|^2]^{\frac12}=\mathbb E[\|v\|_{L^2(\Omega,\mathcal F,\mathbb P;K)}^2]^{\frac12}, \qquad v\in L^2_{\mathcal P}(\Omega,\mathcal F,\mathbb P;K).
\end{align}
Hence, since $\widetilde u_t\in K\subseteq L^2_{\mathcal P}(\Omega,\mathcal F,\mathbb P;K)$ for every $t\in(0,T]$, then from \eqref{stima_der_semigruppo} and \eqref{stima_L2_div} we infer that
\begin{align*}
  |\langle (\nabla_x P_tf)(x),h\rangle_H|
\leq \|f\|_\infty \mathbb \|\widetilde u_t\|_{L^2([0,T];U)}.
\end{align*}
Again, we recall that the family $(\widetilde u_t)_{t\in(0,T]}$ also depends on $x$ and $h$.

We introduce the space $C_K(H)$ of continuous functions with polynomial growth with respect to $x\in H$, that is there exists $K\in \N$ such that the functions
\begin{equation*}\label{eq:polgrowth}
H\ni x\mapsto \dfrac{f(x)}{(1+\vert x\vert^2) ^{K/2}},
\end{equation*}
is bounded and we endow $C_K(H)$ with the norm
\begin{equation*}
\Vert f \Vert_{C_K} =\sup_{x\in H}  \dfrac{f(x)}{(1+\vert x\vert^2) ^{K/2}},
\end{equation*}
so that $C_K(H)$ turns ut to be a Banach space. Note that if $K=0$, $C_0(H)=C_b(H)$ and $\Vert \cdot \Vert_{C_0}=\Vert \cdot \Vert_\infty$.

So if we repeat the previous calculations, starting from $f\in C_K(H)$ and taking \eqref{stima_sup_X} into account, then we get 
\begin{align*}
\label{stima_der_semigruppo_poli}
|\langle (\nabla_x P_tf)(x),h\rangle_H|
=& |\mathbb E[f(X_t^x)\delta(\widetilde u_t)]|
\leq \|f\|_{C_K} (1+\vert x\vert^2) ^{K/2})\mathbb  E[|\delta(\widetilde u_t)|] \notag \\
\leq &  \|f\|_{C_K} (1+\vert x\vert^2) ^{K/2}) (\mathbb  E[|\delta(\widetilde u_t)|^2])^{\frac12},
\end{align*}
where $\widetilde u$ satisfies \eqref{link_gat_der_mall_der}, and, in analogy to what we have done before, we end up with
\begin{align*}
\|\langle \nabla_x P_tf,h\rangle\|_{C_K} 
\leq \|f\|_{C_K} \mathbb \|\widetilde u_t\|_{L^2([0,T];U)}.
\end{align*}

% \subsection{The second order derivative}
% Let us fix $t>0$, $f\in C^1_b(H)$ and $h,k\in H$, and assume that . Then, we have
% \begin{align*}
% \langle (\nabla^2P_tf)(x)h,k\rangle_H
% = & \langle \nabla(\langle (\nabla P_t)f,h\rangle_H)(x),k\rangle_H \\
% = & \langle \nabla\mathbb E[f(X_t^x)\delta(\widetilde u)],k\rangle_H
% \end{align*}

\begin{remark}\label{rm-final} It turns out that in order to obtain first order differentiability of $P_tf$ for continuous functions $f$, it is crucial to prove that \eqref{link_gat_der_mall_der} holds true.
\newline In the literature it is well-known that when $G(t)$ is invertible \eqref{link_gat_der_mall_der} holds true and it is possible to prove the so called Bismut formula, see \cite{B}, \cite{DP3} and the references therein, see also \cite{Ce} where stochastic reaction diffusion equations and Schauder estimates for there transition semigroups are investigated by means of Bismut formulae. In the next Sections we consider stochastic evolution equations with diffusion coefficient possibly degenerate, and we apply our results to the stochastic wave equation and to the stochastic damped wave equation: we are able to prove the analogous of the Bismut formula even if these equations admit an abstract riformulation with $G(t)$ not invertible.
\end{remark}

\begin{remark}
%\label{remark:G-const}
In the present paper we assume that $G$ does not depend on $x$. We are aware that in finite dimension  a similar estimates is proved in \cite{CriDel} for operators $G$ depending also on $x$. The extension of this estimate to infinite dimensions is an open problem up to our knowledge. 
\end{remark}

\section{Bismut-Elworthy formula for forward-backward system}
\label{sec:fbsde}

In this section we consider the following forward-backward system of stochastic differential equations, where the forward equation is given by \eqref{eq_nonlin_1}. So for given $s\in [0,T]$ and $x\in H$, we consider 
\begin{equation}\label{fbsde}
\left\{\begin{array}{ll}\dis  dX_t =
AX_tdt+B(t,X_t)dt+G(t)dW_t,&  t\in
[s,T]\subset [0,T],\\[1mm]
X_\tau=x, & \tau\in[0,s], \\[1mm]
-dY_t =\psi(t,X_t,Y_t,Z_t)\;dt-Z_t\;dW_t,
 & t\in[0,T], \\[1mm]
  Y_T=\phi(X_T),
\end{array}\right.
\end{equation}
for the unknown $(X,Y,Z)$, also denoted by
$(X^{s,x},Y^{s,x},Z^{s,x})$ to stress the dependence on the initial conditions $s$ and $x$. 
%The process $X$, which is solution of the forward equation (\ref{forward}), has been extended for $0\leq s\leq t$ by setting $X_s=x$ for $0\leq s\leq t$. 
The second equation is of backward type for the unknown $(Y,Z)$,
it depends on the process $X$ and we will refer to it as backward stochastic differential equation (BSDE) in a Markovian framework in the following.
Under suitable assumptions on the coefficients
 $\psi:[0,T]\times H\times \R\times U
\rightarrow\mathbb{R}$
and $\mathbb{\phi}:H\rightarrow\mathbb{R}$, we will look for a solution consisting of a pair of predictable processes,
taking values in $\mathbb{R}\times U$, such that $Y$ has
continuous paths and
\[
\|\left( Y,Z\right)\|^2_{\cals^2\times\calm^2}:=
\mathbb{E}\sup_{\tau\in\left[ 0,T\right] }\left\vert Y_{\tau}\right\vert
^{2}+\mathbb{E}\int_{0}^{T}\left\vert Z_{\tau}\right\vert ^{2}d\tau<\infty,
\]
see e.g. \cite{PaPe1} for the case where the generator $\psi$ is assumed to be Lipschitz continuous with respect to $Y$ and $Z$, as stated in the following assumptions.
\begin{hypothesis}
\label{ip-psiphi-li} The function $\phi$ is continuous and the function $\psi$ is measurable, moreover for every fixed $t\in[0,T]$ the map $\psi(t,\cdot,\cdot,\cdot):H\times\R\times U\rightarrow\R$
is continuous. There exist nonnegative constants $L_\psi,\,K_\psi,\,m,\,K_\phi$
such that
\begin{align*}
&\vert \psi(t,x,y_1,z_1)- \psi(t,x,y_2,z_2)\vert\leq L_\psi\left(\vert y_1-y_2\vert+\vert z_1-z_2\vert
\right),\\
& \vert \psi(t,x,0,0)\vert\leq K_\psi(1+|x|^m), \qquad 
\vert \phi(x)\vert\leq K_\phi(1+\vert x\vert^m),
\end{align*}
for every $t\in[0,T]$, $x\in H$, $y_1,y_2\in\R$ and $z_1,z_2\in U$.
\end{hypothesis}

\begin{theorem}\label{teo-ex-bsde}Let $(X,Y,Z)$ be solution of the forward-backward system (\ref{fbsde})
and assume that Hypotheses \ref{hyp:base} and \ref{ip-psiphi-li} hold true.
Then there exists a unique solution of the Markovian FBSDE in (\ref{fbsde}) such that, for every $p\in[2,\infty)$, 
\[
 \Vert Y\Vert _{\cals^p}+\Vert Z\Vert_{\calm^p}:=\E[\sup_{s\in[0,T]}|Y_s|^p]+\E\left(\int_0^T|Z_s|^2ds\right)^{\frac p2}\leq C(1+|x|^m),
\]
where $C$ is a positive constant that may depend on $A,\,B,\,G,\, K_\psi,\,L_\psi,\,K_\phi,\,T$ and $p$.
\end{theorem}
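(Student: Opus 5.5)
Since the forward equation in \eqref{fbsde} is decoupled from $(Y,Z)$, the plan is to solve it first and then treat the backward equation as a standard Lipschitz BSDE with a random but known terminal condition and generator. By \cite[Proposition 3.2]{FuhTes2002}, under Hypothesis \ref{hyp:base} there is a unique mild solution $X=X^{s,x}\in L^q_{\mathcal P}(\Omega;C([0,T];H))$ for every $q\in[2,\infty)$, and by \eqref{stima_sup_X} it satisfies $\E\sup_{\sigma\in[0,T]}|X_\sigma|^q\leq C(1+|x|)^q$. We then set $\xi:=\phi(X_T)$ and $f(t,\omega,y,z):=\psi(t,X_t(\omega),y,z)$. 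These satisfy:
\begin{itemize}
\item[(a)] $f$ is predictable in $(t,\omega)$, by the measurability of $\psi$, its continuity in $x$, and the predictability of $X$;
\item[(b)] $f$ is Lipschitz in $(y,z)$ with constant $L_\psi$, uniformly in $(t,\omega)$;
\item[(c)] $|\xi|\leq K_\phi(1+|X_T|^m)$ and $|f(t,0,0)|\leq K_\psi(1+\sup_\sigma|X_\sigma|^m)$.
\end{itemize}
Combining (c) with \eqref{stima_sup_X} (with $q=pm$) gives $\xi\in L^p(\Omega)$ and $\E(\int_0^T|f(t,0,0)|dt)^p<\infty$ for every $p\geq2$.

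For existence and uniqueness, we invoke the Pardoux--Peng theorem \cite{PaPe1}, which applies since the filtration is the augmented Brownian filtration of the cylindrical Wiener process $W$. Its proof goes as follows. For $p=2$, we define the map $\Phi:(y,z)\mapsto(Y,Z)$ by
$$Y_t=\E\Big[\xi+\int_t^T f(r,y_r,z_r)dr\Big|\mathcal F_t\Big],$$
with $Z$ obtained from the martingale representation theorem for $U$-cylindrical Brownian filtrations. We show that $\Phi$ is a contraction on $\cals^2\times\calm^2$ endowed with the weighted norm $\E\int_0^Te^{\beta t}(|Y_t|^2+|Z_t|^2)dt$, for $\beta$ large, using It\^o's formula for $e^{\beta t}|Y_t-Y'_t|^2$. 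Continuity of paths and the $\cals^2$ bound then follow from Doob's inequality. Uniqueness in the larger classes follows from the same $L^2$ argument. On $[0,s]$, $X$ is constant and equal to $x$, so nothing changes there.

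For the $L^p$ estimate, we apply It\^o's formula to $|Y_t|^2$, which gives
$$|Y_t|^2+\int_t^T|Z_r|^2dr=|\xi|^2+2\int_t^TY_rf(r,Y_r,Z_r)dr-2\int_t^TY_rZ_rdW_r.$$
We bound $2|Y_r||f(r,Y_r,Z_r)|\leq 2|Y_r||f(r,0,0)|+2L_\psi|Y_r|^2+2L_\psi|Y_r||Z_r|$ and absorb the term $\frac12\int|Z|^2$ by Young's inequality. We then raise to the power $p/2$, take the supremum, and treat the stochastic integral with the Burkholder--Davis--Gundy inequality: $\E\sup|\int Y Z dW|^{p/2}\leq C\E(\sup|Y|^2\int|Z|^2)^{p/4}\leq \varepsilon\E\sup|Y|^p+C_\varepsilon\E(\int|Z|^2)^{p/2}$. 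Combining this with Gronwall's lemma (or with an exponential weight $e^{\beta t}$), we obtain the classical a priori estimate
$$\E\sup_t|Y_t|^p+\E\Big(\int_0^T|Z_t|^2dt\Big)^{p/2}\leq C_p\,\E\Big[|\xi|^p+\Big(\int_0^T|f(t,0,0)|dt\Big)^p\Big].$$
Inserting (c) and \eqref{stima_sup_X} bounds the right-hand side by $C(1+|x|^{m})^p$. Hence the $p$-th moments are bounded by $C(1+|x|^m)^p$, which is the claimed bound once the norms are read as $p$-th roots (as the definition of $\Vert\cdot\Vert_{\cals^p}$ in the statement is written without roots).

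The main technical point is this $L^p$ estimate for $p>2$. The BDG absorption is legitimate only if $\E\sup|Y|^p$ is a priori finite, so we first run the argument with a localization: we stop at $\tau_n=\inf\{t:|Y_t|+\int_0^t|Z_r|^2dr\geq n\}$ and pass to the limit by Fatou's lemma. Alternatively, we can use the fixed point directly in $\cals^p\times\calm^p$, which also gives the solution in that space. Either way the constants depend only on $p$, $L_\psi$, $T$ and, through \eqref{stima_sup_X}, on the constants in Hypothesis \ref{hyp:base}, $K_\psi$ and $K_\phi$, as stated.
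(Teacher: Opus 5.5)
Your reduction is the one the paper relies on. The paper gives no argument for this theorem beyond its appeal to the standard Lipschitz BSDE theory of \cite{PaPe1}. Your plan matches that: solve the decoupled forward equation, check via \eqref{stima_sup_X} that $\xi=\phi(X_T)$ and $f(t,y,z)=\psi(t,X_t,y,z)$ satisfy the Pardoux--Peng hypotheses with data in every $L^p$, then invoke existence, uniqueness and the classical $L^p$ estimate. You are also right that the bound must be read as $C(1+|x|^m)^p$ for the $p$-th moments, equivalently $C(1+|x|^m)$ for the $p$-th roots. For $p>1$ and $m>0$ the moments are in general of order $|x|^{mp}$.

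However, the derivation you sketch for the $L^p$ estimate with $p>2$ does not close. After BDG you are left with $\varepsilon\,\E\sup_t|Y_t|^p+C_\varepsilon\,\E(\int_0^T|Z_t|^2dt)^{p/2}$ on the right-hand side. The only available bound for $\E(\int_0^T|Z_t|^2dt)^{p/2}$, from the same It\^o identity and again via BDG, has the form data $+\,C\,\E\sup_t|Y_t|^p$ with a constant $C$ that is not small. Substituting gives the coefficient $\varepsilon+CC_\varepsilon$ in front of $\E\sup_t|Y_t|^p$. Neither Gronwall nor an exponential weight helps, because the obstruction is the stochastic integral, not the $dr$-terms.

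The stopping-time localization has a similar problem. On $[0,\tau_n]$ the terminal value is $Y_{\tau_n}$, not $\xi$, and letting $n\to\infty$ requires uniform integrability of $|Y_{\tau_n}|^p$, which is what you are trying to prove.

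The clean fix is your own alternative. By Doob's inequality, BDG and the martingale representation, the Picard map $\Phi$ is a contraction with constant at most $\frac12$ in $\cals^p\times\calm^p$ on intervals of length $\eta=\eta(p,L_\psi)$. Its fixed point therefore satisfies $\|(Y,Z)\|\le 2\|\Phi(0,0)\|\le C\big(\E|\xi|^p+\E(\int_0^T|f(t,0,0)|dt)^p\big)^{1/p}$, and finitely many backward steps cover $[0,T]$. This gives existence in $\cals^p\times\calm^p$ and the estimate at the same time.

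If you prefer the It\^o route, take conditional expectations in the weighted identity to get $e^{\beta t}|Y_t|^2\le\E\big[e^{\beta T}|\xi|^2+2\int_0^Te^{\beta r}|Y_r||f(r,0,0)|dr\,\big|\,\mathcal F_t\big]$. Then apply Doob's maximal inequality with exponent $p/2>1$. This bounds $\E\sup_t|Y_t|^p$ without involving $Z$, using $\sup_t|Y_t|\in L^p$ (known from the fixed point) to absorb the term containing $|Y_r|$. Only afterwards use BDG for the $Z$-term.
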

%\dim This result substantially follows from \cite{Kob}.

\qed

%Many other results concerning not bounded final data and generator with polynomial growth with respect to $x$have been proved after \cite{Kob}, we cite here \cite{BriHu2006} and \cite{BriHu2008}.

%We also cite the Feynman-Kac formula, proved in \cite{BriFu} when all the coefficients are differentiable and in the case of $\psi$ quadratic with respect to $z$, and generalized e.g. in \cite{Mas1} to nonsmooth coefficients, and in \cite{MR} to the case of $\psi$ superquadratic with respect to $z$.More precisely, l

We now derive a non linear version of the Bismut-Elworthy
formula as the one proved in \cite{futeBismut}, under assumption \eqref{link_gat_der_mall_der}, that in particular holds true if we consider $G$
bounded and invertible.

We also consider the following additional hypothesis.
\begin{hypothesis}
\label{hyp:operatorG}
We assume that $G(t)=J\widetilde G(t)$, where $J\in\mathcal L(U,H)$ is the operator introduced in Hypothesis \ref{hyp:base}(i) and $\widetilde G:[0,T]\to \mathcal L(U)
$ is such that $\widetilde G(t)$ is invertible with bounded inverse for every $t\geq 0$, i.e., and $\sup_{t\in[0,T]}\|\widetilde G(t)^{-1}\|_{\mathcal L(U)}<\infty$. Moreover, there exists a positive constant $C$ such that
\begin{align}
\label{stima_norma_L2_utilde}
\|\widetilde u_{Ja,s,x,r}\|_{L^2(s,r;U)}\leq \frac{C\|a\|_U}{(r-s)^{\frac12}}, \qquad 0\leq s<r<T.
\end{align}    
\end{hypothesis}

To enlighten the notation, we set
\begin{equation}\label{girsanov-density}
\bm{\Psi}:=\exp\left\{
{\displaystyle-\int_{s}^{T}}
\left\langle \widetilde G(r)^{-1}\bar B\left( r, X_r^{s,x}\right)  ,dW_{r}\right\rangle_U -\frac{1}{2}
{\displaystyle\int_{s}^{T}}
\left\|  \widetilde G(r)^{-1}\bar B\left( r, X_r^{s,x}\right)    \right\|  _{U}^{2}ds\right\},
\end{equation}
where $\bar B$ is the function introduced in Hypothesis \ref{hyp:base}(i), and we denote by $\widetilde \E $ the expectation with respect to the probability measure $\widetilde \P $ defined by $\dfrac{d\widetilde \P}{d \P}:=\bm{\Psi}$, and by $\widetilde W$ the $U$-cylindrical Wiener process defined as $d\widetilde W_t=dW_t+\widetilde G(t)^{-1}\bar B(t,X^{s,x}_t)dt$. 

Finally, we recall the following approximation result which will be exploited in the proof of Theorem \ref{teoBismutLip}. A proof of this result can be found in \cite[Lemma 4.3]{futeBismut}.
\begin{lemma}
\label{lem:approx_psi_phi}
Let $\psi$ and $\phi$ satisfy Hypothesis \ref{ip-psiphi-li}. Hence, there exist sequences $(\psi_n)$ and $(\phi_n)$ such that
\begin{enumerate}[\rm(i)]
\item for every $n\in\N$ we have $\phi_n\in\mathcal G^{1}(H,\R)$ and $\psi(t,\cdot,\cdot,\cdot)\in\mathcal G^{1,1,1}(H\times \R\times U,\R)$ for every $t\in[0,T]$;
\item for every $n\in\N$ there exist constants $C_n>0$ and $m_n\geq0$ such that 
\begin{align*}
|\nabla_x\psi_n(t,x,y,z)h|\leq C_n|h|(1+|z|)(1+|x|+|y|)^{m_n}, \qquad |\nabla_x\phi_n(x)h|\leq C_n|h|
\end{align*}
for every $t\in[0,T]$ and every $(x,y,z,h)\in H\times \R\times U\times H$;
\item there exist positive constants $\widetilde L_\psi$, $\widetilde K_\psi$ and $ \widetilde K_\phi$, which only depend on $L_\psi,K_\psi$, $K_\phi$ and $m$, such that
\begin{align*}
& |\psi_n(t,x,y_1,z_1)-\psi_n(t,x,y_2,z_2)|\leq \widetilde L_\psi(|y_1-y_2|+|z_!-z_2|), \\
& |\psi_n(t,x,0,0)\leq K_\psi(1+|x|^m), \qquad |\phi_n(x)|\leq K_\phi(1+|x|^m)
\end{align*}
for every $n\in\N$, every $t>0$ and every $(x,y,z)\in H\times \R\times U$;
\item $(\psi_n)$ and $(\phi_n)$ pointwise converge to $\psi$ and $\phi$, respectively, as $n$ tends to infinity.
\end{enumerate}
\end{lemma}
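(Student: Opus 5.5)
The plan is to construct $\phi_n$ and $\psi_n$ explicitly in three steps: truncate, project onto finite-dimensional subspaces, and mollify. Each step preserves the growth and Lipschitz bounds in (iii) up to harmless constants, while producing the smoothness and derivative bounds in (i)–(ii).

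\emph{Truncation and projection.} Fix orthonormal bases $\{e_k\}$ of $H$ and $\{u_k\}$ of $U$. Let $P_n$ and $Q_n$ be the orthogonal projections onto $\mathrm{span}\{e_1,\dots,e_n\}$ and $\mathrm{span}\{u_1,\dots,u_n\}$, identified with $\R^n$. First set
\[
\phi^{(n)}:=(-n)\vee(\phi\wedge n),\qquad \psi^{(n)}(t,x,y,z):=(-n)\vee(\psi(t,x,y,z)\wedge n).
\]
Clamping is $1$-Lipschitz, so the Lipschitz bound in $(y,z)$ is kept with constant $L_\psi$. Likewise $|\phi^{(n)}|\le|\phi|$, and
\[
|\psi^{(n)}(t,x,0,0)|\le|\psi(t,x,0,0)|.
\]
Continuity of $\psi(t,\cdot,\cdot,\cdot)$ and of $\phi$ is preserved. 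The functions are now bounded by $n$, which is what will give the bounds in (ii).

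\emph{Mollification.} Let $\rho_n$ be a smooth mollifier on $\R^n$ (respectively $\R\times\R^n$) supported in the ball of radius $1/n\le 1$. Define
\[
\phi_n(x):=\int_{\R^n}\rho_n(\xi)\,\phi^{(n)}(P_nx-\xi)\,d\xi ,
\]
and, analogously,
\[
\psi_n(t,x,y,z):=\int\rho_n(\xi)\rho_n(\eta,\zeta)\,\psi^{(n)}(t,P_nx-\xi,\,y-\eta,\,Q_nz-\zeta)\,d\xi\, d\eta\, d\zeta .
\]
Here $\xi$, $\zeta$ are read as elements of the spans of $e_1,\dots,e_n$ and $u_1,\dots,u_n$.

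These functions depend on finitely many coordinates through $C^\infty$ convolutions, hence are Gâteaux (indeed Fréchet) differentiable, which gives (i). Measurability in $t$ follows from Fubini.

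Differentiating under the integral and moving the derivative onto $\rho_n$ gives
\[
|\nabla_x\phi_n(x)h|\le n\|\nabla\rho_n\|_{L^1}|h|,
\]
and the same bound for $\nabla_x\psi_n$, which is stronger than (ii) with $m_n=0$.

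For (iii), note that projection and averaging are contractions. So $\psi_n$ is $L_\psi$-Lipschitz in $(y,z)$. Using $|P_nx-\xi|\le|x|+1$, the polynomial growth bounds become
\[
|\phi_n(x)|\le K_\phi\bigl(1+(|x|+1)^m\bigr)\le 2^mK_\phi(1+|x|^m).
\]
For $\psi_n(t,x,0,0)$ there is an extra term $L_\psi\int\rho_n(\eta,\zeta)(|\eta|+|\zeta|)\le 2L_\psi$. Both are absorbed into constants $\widetilde K_\phi$ and $\widetilde K_\psi$ depending only on $K_\phi$, $K_\psi$, $L_\psi$ and $m$.

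\emph{Pointwise convergence (iv).} This is the step needing care, because the truncation level, the projection and the mollification radius all change with $n$ at once. Fix $(t,x,y,z)$. The integrand is evaluated at points $(P_nx-\xi,\,y-\eta,\,Q_nz-\zeta)$, which converge to $(x,y,z)$ uniformly over the supports, since $P_nx\to x$, $Q_nz\to z$ and the radius $1/n\to0$. By continuity of $\psi(t,\cdot,\cdot,\cdot)$, the values $\psi$ at these points converge uniformly to $\psi(t,x,y,z)$. In particular they are eventually bounded by $|\psi(t,x,y,z)|+1<n$, so the clamp becomes inactive. Since $\rho_n$ integrates to $1$, it follows that $\psi_n(t,x,y,z)\to\psi(t,x,y,z)$, and the same argument applies to $\phi_n$.
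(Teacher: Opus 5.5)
Your construction is correct and proves the lemma as stated, reading the constants in (iii) as $\widetilde K_\psi,\widetilde K_\phi$, which is clearly what is intended.

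The paper gives no argument of its own. It defers to Lemma~4.3 of Fuhrman--Tessitore, and in the proofs of Proposition~\ref{prop-identif-Z} and Theorem~\ref{teo_fey_kac} it calls the approximants ``inf-sup convolutions''.

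Your route is the elementary cylindrical-function approach: clamp at level $n$, compose with $P_n,Q_n$, and mollify at scale $1/n$ in finitely many coordinates. What it buys:
\begin{itemize}
\item It needs only continuity of $\phi$ and of $\psi(t,\cdot,\cdot,\cdot)$, plus joint measurability for Fubini.
\item It yields $C^\infty$ approximants with the explicit bound $|\nabla_x\phi_n(x)h|\le n\|\nabla\rho_n\|_{L^1}|h|$.
\item It keeps the Lipschitz constant $L_\psi$ in $(y,z)$, since clamping, projection and averaging are all contractions.
\item It reduces (iv) to a continuity argument at a single point.
\end{itemize}
A Lasry--Lions inf-sup regularization would instead act directly on $H\times\R\times U$ and give $C^{1,1}$ approximants. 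It is naturally set up for bounded (or suitably growth-controlled) functions, though, so a truncation would be needed there as well.

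In either approach, truncation is what produces a \emph{bounded} $\nabla_x\phi_n$ as demanded in (ii). Mollifying a polynomially growing $\phi$ alone only gives $|\nabla_x\phi_n(x)h|\le C_n(1+|x|)^m|h|$, so that step of yours is genuinely needed.

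One harmless slip: $1+(1+|x|)^m\le 2^m(1+|x|^m)$ fails for $0<m<1$ (take $x=0$). Use $1+2^m$ in place of $2^m$.
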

\begin{theorem}\label{teoBismutLip}
Assume that Hypotheses \ref{hyp:base}, \ref{ip-psiphi-li} and \ref{hyp:operatorG} and condition \eqref{link_gat_der_mall_der} hold true.
Then for every $0\leq s< t\leq T$ and $x,h\in H$ 
% such that $\alpha$
% \begin{align*}
% [t,T]\ni r\mapsto \|\widetilde u_{h,s,x,r}\|_{L^2([s,r];U)}\in L^1([t,T]),    
% \end{align*}
we get the following Bismut formula
\begin{align}\label{Bismut-lip}
\widetilde{\E}\left[ \langle \nabla_x\,Y^{s,x}_t,h\rangle \right]
= & 
\widetilde{\E}\int_t^T\psi\left(r,X_r^{s,x},Y_r^{s,x},Z_r^{s,x}\right)
\int_s^r\langle \widetilde u_{h,s,x,r}(\sigma),d\widetilde W_\sigma\rangle_{U}\,dr \notag \\
&+\widetilde{\E}\displaystyle\int_t^T Z_r\widetilde G(r)^{-1}\bar B(r,X_r^{s,x})\int_s^r\langle\widetilde u_{h,s,x,r}(\sigma),d\widetilde W_\sigma\rangle_U dr \notag \\
& 
+\widetilde{\E}\left[  \phi(X_T^{s,x})\int_s^T\langle \widetilde u_{h,s,x,T}(\sigma),d\widetilde W_\sigma\rangle_{U}\right],
\end{align}
where $\widetilde \E $ is the expectation with respect to the probability measure $\widetilde \P$ such that $\dfrac{d\widetilde \P}{d \P}:=\bm{\Psi}$, where $\bm\Psi$ is defined in \eqref{girsanov-density}.
\newline Moreover, if $h=Ja$, $a\in U$, then equality \eqref{Bismut-lip} holds true also for $t=s$ and there exists a positive constant $C$, which depends on $A,\bar B,\widetilde G,J,K_\phi,K_\psi,L_\psi$ and $T$, such that
\begin{align}
\label{stima_Gder_Y_lip}
|\langle \nabla_xY_s^{s,x},Ja\rangle_U|
\leq \frac{C(1+|x|^m)|a|_U}{(T-s)^\frac12}, \qquad s\in[0,T), \ a\in U.
\end{align}
\end{theorem}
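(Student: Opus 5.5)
The plan follows the strategy of \cite{futeBismut}. First I treat the driftless case $B=0$ with smooth coefficients. Then I remove the drift by Girsanov's theorem. Finally I pass to general $\psi,\phi$ by the approximation of Lemma \ref{lem:approx_psi_phi}.

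\emph{Step 1: $B=0$, smooth coefficients.} Take $\psi,\phi$ as in Lemma \ref{lem:approx_psi_phi}(i)--(ii). The results on Markovian BSDEs with differentiable coefficients (as in \cite{futeBismut}, \cite{PaPe1}) give that $x\mapsto (Y^{s,x},Z^{s,x})$ is Gâteaux differentiable. They also give that $Y_r\in\mathbb D^{1,2}$ and $Z_r\in\mathbb D^{1,2}$, with $D_\sigma Y_r=Z_\sigma$-type identifications; in particular $D_\sigma Y_\sigma=Z_\sigma$. Fix $r\in(s,T]$ and use \eqref{link_gat_der_mall_der} with $\widetilde u_{h,s,x,r}$. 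Then $\nabla_xX_\rho h=\langle DX_\rho,\widetilde u_{h,s,x,r}\rangle$ for the relevant times, because $\widetilde u$ is deterministic.

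I will not compare $\nabla_x Y$ and $DY$ pathwise. Instead I write $Y^{s,x}_t=\E^{\mathcal F_t}[\phi(X_T)+\int_t^T\psi(r,X_r,Y_r,Z_r)dr]$ and compute the derivative of the expectation $\E[Y_t]$. Each term $\E[\nabla_x\psi(r,\cdot)\nabla_x X_r h+\dots]$ is rewritten as the directional derivative of $P_{s,r}$-type quantities. Concretely, I define $v(r,y)=$ the mild solution of the Kolmogorov equation, so that $Y_r=v(r,X_r)$ and $\psi(r,X_r,Y_r,Z_r)=:\Psi_r(X_r)$ is a function of $X_r$. Then
\[
\E[Y_t]=\E\Big[\phi(X_T)+\int_t^T\Psi_r(X_r)\,dr\Big]=P_{s,T}\phi(x)+\int_t^T P_{s,r}\Psi_r(x)\,dr .
\]
Differentiating in $x$ along $h$ and applying the linear Bismut formula \eqref{int_parit_derivata} to each $P_{s,r}$ with the deterministic control $\widetilde u_{h,s,x,r}$ gives \eqref{Bismut-lip} without the drift term. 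Here $\delta(\widetilde u)=\int_s^r\langle\widetilde u(\sigma),dW_\sigma\rangle$.

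\emph{Step 2: general drift.} Under $\widetilde\P$, with density $\bm\Psi$, $X$ solves the driftless equation driven by $\widetilde W$. This uses $B=J\bar B=J\widetilde G\,\widetilde G^{-1}\bar B=G\,\widetilde G^{-1}\bar B$, and $\bar B$ bounded gives Novikov's condition. The BSDE rewrites as $-dY=[\psi+Z\widetilde G^{-1}\bar B]dt-Zd\widetilde W$. Applying Step 1 under $\widetilde\P$ with generator $\psi+Z\widetilde G^{-1}\bar B$ yields the second line of \eqref{Bismut-lip}. There is a subtlety: the new generator depends on $x$ through $\bar B$, which is only measurable. I handle it by first regularizing $\bar B$, or by noting that under $\widetilde\P$ the dependence enters only through $X$, which is treated as a function of $X_r$ as in Step 1.

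\emph{Step 3: approximation and the estimate.} For $\psi_n,\phi_n$, the uniform bounds in Lemma \ref{lem:approx_psi_phi}(iii) and the a priori estimates of Theorem \ref{teo-ex-bsde} give $(Y^n,Z^n)\to(Y,Z)$ in $\mathcal S^2\times\mathcal M^2$. The right-hand sides converge by dominated convergence, using \eqref{stima_sup_X} and Itô isometry. For the left-hand side, the right-hand side formula shows that the directional derivatives of $x\mapsto\widetilde\E Y^n_t$ converge locally uniformly in $x$, so the limit is differentiable and the identity passes to the limit.

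For $h=Ja$ and $t=s$, I use \eqref{stima_norma_L2_utilde}. Cauchy--Schwarz with Itô isometry bounds the $r$-integrand by $C(1+|x|^m)|a|(r-s)^{-1/2}$, which is integrable on $[s,T]$. The terminal term gives $(T-s)^{-1/2}$. Together with $\int_s^T (r-s)^{-1/2}dr\le C$ and the change of measure, whose densities have all moments, this gives \eqref{stima_Gder_Y_lip}.

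The main obstacle is Step 1's identification $Y_r=v(r,X_r)$ with enough regularity of $\Psi_r$ to apply the linear formula. It also requires controlling the singularity at $r=s$ so that $t=s$ is allowed only when $h=Ja$.
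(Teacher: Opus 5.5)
Your Step 1 is a legitimate alternative to the paper's route. The paper differentiates the integral BSDE, substitutes $\nabla_xX_rh=\int_s^rD_\sigma X_r\,\widetilde u_{h,s,x,r}(\sigma)\,d\sigma$, and uses the Malliavin chain rule and duality term by term. You instead apply the linear formula \eqref{int_parit_derivata} to each $P_{s,r}\Psi_r$, which needs only the Markovian representation of $(Y,Z)$. Two small corrections: take $v(r,y):=Y^{r,y}_r$ rather than ``the mild solution'', since Section~\ref{sec:pde} constructs that from this very theorem; and $\nabla_xX_\rho h=\langle DX_\rho,\widetilde u_{h,s,x,r}\rangle$ holds only at $\rho=r$, which is all you use.

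The genuine gap is the estimate \eqref{stima_Gder_Y_lip}. The $r$-integrands in \eqref{Bismut-lip} contain $Z_r$, through $\psi(r,X_r,Y_r,Z_r)$ and through $Z_r\widetilde G(r)^{-1}\bar B(r,X_r)$. Theorem~\ref{teo-ex-bsde} controls $Z$ only in $\mathcal M^p$, not pointwise in $r$. For merely continuous $\phi$, $\widetilde\E|Z_r|^2$ is in general unbounded as $r\to T$. So your claim that Cauchy--Schwarz bounds the integrand by $C(1+|x|^m)|a|_U(r-s)^{-1/2}$ is unjustified. The $\mathcal M^2$ bound does not rescue it either: $\int_s^T\widetilde\E[|Z_r||\delta(\widetilde u_{Ja,s,x,r})|]\,dr\le\|Z\|_{\mathcal M^2}\big(\int_s^T\|\widetilde u_{Ja,s,x,r}\|^2_{L^2(s,r;U)}dr\big)^{1/2}$, and \eqref{stima_norma_L2_utilde} only yields $\int_s^T(r-s)^{-1}dr=\infty$.

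The missing idea is the bootstrap the paper uses. By the Markov property, $Z^{s,x}_r=\nabla_xY^{r,X^{s,x}_r}_rG(r)=\nabla_xv(r,X_r)J\widetilde G(r)$, so $Z$ is controlled by the very quantity you are estimating. Set $\||\nabla_xYJ\||:=\sup_{t\in[T-\eta,T)}\sup_{x\in H}(T-t)^{1/2}(1+|x|)^{-m}\sup_{|a|_U\le1}|\langle\nabla_xY^{t,x}_t,Ja\rangle_H|$. The $Z$-term and the drift term are then bounded by $C\||\nabla_xYJ\||(1+|x|^m)|a|_U\int_s^T(T-r)^{-1/2}(r-s)^{-1/2}dr$. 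This is finite precisely because $Z$ involves only derivatives along $J$ (Remark~\ref{rm:Ga}). Multiplying by $(T-s)^{1/2}\le\eta^{1/2}$ puts a factor $\eta^{1/2}$ in front of $\||\nabla_xYJ\||$, which you absorb for $\eta$ small, and then you iterate backward on intervals of length $\eta$.

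This argument needs two things. The norm must be a priori finite for the regularized coefficients, which is what the regularity in Lemma~\ref{lem:approx_psi_phi}(i)--(ii) is for. The resulting constant must depend only on $\widetilde L_\psi,\widetilde K_\psi,\widetilde K_\phi,\|\bar B\|_\infty,\widetilde G,T$, i.e.\ be uniform in $n$. The uniform weighted bound $|Z^n_r|\le C(1+|X_r|^m)(T-r)^{-1/2}$ also supplies the dominating function your Step 3 needs to pass to the limit in the $Z$-terms at $t=s$, where $\mathcal M^2$-convergence of $Z^n$ alone is not enough. So the real obstacle is not just the singularity of $\widetilde u$ at $r=s$ but its interaction with the singularity of $Z$ at $r=T$.
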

\begin{proof}
At first, we assume that $\phi,\psi$ and $\bar B$ are differentiable with respect to $x,y$ and $z$. Finally, we will remove such an assumption by means of an approximating procedure. 

We begin by considering the case $\bar B=0$.
For all $h\in H$, the process $\nabla_xX^{s,x}h=(\nabla_x X_t^{s,x}h)_{t\in[0,T]}$ satisfies, $\mathbb P$-a.s., equation \eqref{mild_solu_first_var}. From \eqref{link_gat_der_mall_der}, we deduce that for every $h$ there exists a function $\widetilde u_{h,s,x,t}\in L^2([0,T];U)$ such that $\nabla_x X^{s,x}_th=\langle DX^{s,x}_t,\widetilde u_{h,s,x,t}\rangle_{L^2(0,T;U)}$ $\mathbb P$-a.s. for every $t\in(s,T]$. 
\newline 
Let us write in integral form the FBSDE in the system \eqref{fbsde}:
\begin{equation}
\label{bsde-int}
Y_t^{s,x}=\displaystyle\int_t^T\psi(r,X_r^{s,x},Y_r^{s,x},Z_r^{s,x})\;dr-\displaystyle\int_t^T Z_r\;dW_r+\phi(X_T^{s,x}), \qquad t\in[s,T].
\end{equation}
Recall also that $\forall t\in [s,T],\,\xi\in H$, the function $(t,\xi)\mapsto Y_t^{t,\xi}$ is deterministic and we set
\begin{equation*} 
%\label{v}
v(t,\xi):=Y_t^{t,\xi}.
\end{equation*}
By the Markov property, we have also that
$Y_t^{s,x}=Y_t^{t,X_t^{s,x}}=v(t,X_t^{s,x})$. 
Moreover, for every $t\in [s,T]$ and $\xi\in H$, since $Z_t^{t,\xi} =\nabla_\xi v(t,\xi)G(t)$ (see \cite[Proposition 3.6(iii)]{futeBismut}), the function $(t,y)\mapsto Z_t^{t,\xi}$ is deterministic. We set
\begin{equation*} 
%\label{vZ}
Z_t^{t,\xi}:=\bar v(t,\xi)
\end{equation*}
and, again by the Markov property, we have also that
$Z_t^{s,x}=Z_t^{t,X_t^{s,x}}=\bar v(t,X_t^{s,x})$.
\newline 
If we differentiate the integral FBSDE \eqref{bsde-int} with respect to $x\in H$, the initial datum of the forward equation in \eqref{fbsde}, consider the regularity of $X,Y$ and $Z$ (see \cite[Proposition 3.5]{futeBismut}) and take the expectation, then we get, for every $h \in H$ and $t\in[s,T]$,
\begin{align}
\label{bsde-int-diff}
\E\langle \nabla_x Y_t^{s,x}, h\rangle_H
= & \E[\nabla\phi(X_T^{s,x})\langle\nabla_x X_T^{s,x},h\rangle_H]+\E\displaystyle\int_t^T\left[\nabla_x\psi(r,X_r^{s,x},Y_r^{s,x},Z_r^{s,x})\langle\nabla_x X_r^{s,x},h\rangle_H\right. \notag
\\
&+\nabla_y\psi(r,X_r^{s,x},Y_r^{s,x},Z_r^{s,x})\nabla_\xi v(r, X_r^{s,x})\langle\nabla_x X_r^{s,x},h\rangle_H  \notag \\ 
& \left.+\nabla_z\psi(r,X_r^{s,x},Y_r^{s,x},Z_r^{s,x})\nabla_\xi \bar v(r, X_r^{s,x})\langle\nabla_x X_r^{s,x},h\rangle_H\right]dr.
\end{align}
We know that there exist functions $\widetilde u_{h,s,x,r}\in L^2([0,T];U)$, $r\in(s,T]$, such that
\begin{equation*}
\langle\nabla_x X_r^{s,x},h\rangle_H=\int_s^rD_\sigma X_r^{s,x}\widetilde u_{h,s,x,r}(\sigma)d\sigma, \qquad r\in(s,T].
\end{equation*}
So, substituting into \eqref{bsde-int-diff} we get
\begin{align*}
\E\langle \nabla_x Y_t^{s,x}, h\rangle_H&=\E\int_s^T\nabla\phi(X_T^{s,x})D_\sigma X_T^{s,x}\widetilde u_{h,s,x,T}(\sigma)\,d\sigma
\\ \nonumber &+\E\displaystyle\int_t^T\left[\nabla_x\psi(r,X_r^{s,x},Y_r^{s,x},Z_r^{s,x})\int_s^rD_\sigma X_r^{s,x}\widetilde u_{h,s,x,r}(\sigma)\,d\sigma\right. 
\\
\nonumber&+\nabla_y\psi(r,X_r^{s,x},Y_r^{s,x},Z_r^{s,x})\nabla_\xi v(r, X_r^{s,x})\int_s^rD_\sigma X_r^{s,x}\widetilde u_{h,s,x,r}(\sigma)\,d\sigma\\&+\left.\nabla_z\psi(r,X_r^{s,x},Y_r^{s,x},Z_r^{s,x})\nabla_\xi \bar v(r, X_r^{s,x})\int_s^rD_\sigma X_r^{s,x}\widetilde u_{h,s,x,r}(\sigma)d\sigma \right]dr
\end{align*}
for every $t\in[s,T]$. Taking into account the Malliavin differentiability of $Y$ and $Z$ (see \cite[Proposition 3.6(ii)]{futeBismut}), we infer that
\begin{equation*}
D_\sigma Y_r^{s,x} =D_\sigma (v(r,X_r^{s,x}))=\nabla_\xi v(r, X_r^{s,x})D_\sigma X_r^{s,x}
\end{equation*}
and
\begin{equation*}
D_\sigma Z_r^{s,x} =D_\sigma (\bar v(r,X_r^{s,x}))=\nabla_\xi \bar v(r, X_r^{s,x})D_\sigma X_r^{s,x}
\end{equation*}
for every $r\in(s,T]$ and $\sigma\in[0,T]$. Integrating by parts and recalling the expression of $\delta(\widetilde u_{h,s,x,r})$, $r\in(s,T]$, we obtain
\begin{align}
\label{bsde-int-derMall}
\E\langle \nabla_x Y_t^{s,x}, h\rangle_H
&=\E\int_s^TD_\sigma\phi(X_T^{s,x})\widetilde u_{h,s,x,T}(\sigma)\,d\sigma
+\E\displaystyle\int_t^T \int_s^rD_\sigma\psi(r,X_r^{s,x},Y_r^{s,x},Z_r^{s,x})\widetilde u_{h,s,x,r}(\sigma)\,d\sigma\,dr \nonumber \\
& =\E\phi(X_T^{s,x})\int_s^T\langle \widetilde u_{h,s,x,T}(\sigma),\,dW_\sigma\rangle_U
+\E\displaystyle\int_t^T \psi(r,X_r^{s,x},Y_r^{s,x},Z_r^{s,x})\int_s^r\langle\widetilde u_{h,s,x,r}(\sigma),dW_\sigma\rangle_U dr
\end{align}
for every $t\in[s,T]$.

Now let us go back to \eqref{fbsde} with $B\neq 0$. From Hypothesis \ref{hyp:base}(i), there exists $\bar B:[0,T]\times H\to U$ such that for every $\tau \in [0,T]$ and $x \in H$, $B(\tau,x)=J\bar B(\tau,x)$, so by a Girsanov change of probability measure in \eqref{fbsde} there exists a probability measure $\widetilde{\mathbb{P}}$ such that the process $\widetilde W$ given by $d\widetilde W_t=dW_t+\widetilde G(t)^{-1}\bar B(t,X_t)dt$ is a $U$-valued cylindrical Wiener process in the probability space $(\Omega,\mathcal F,\widetilde{ \mathbb P})$ and the FBSDE \eqref{fbsde} can be rewritten as
\begin{equation}\label{fbsde-tilde}
\left\{\begin{array}{ll}\dis  dX_t =
AX_tdt+G(t)d\widetilde W_t,&  t\in
[s,T]\subset [0,T],\\[1mm]
X_\tau=x, & \tau\in[0,s], \\[1mm]
-dY_t =\psi(t,X_t,Y_t,Z_t)\;dt+Z_t\widetilde G(t)^{-1}\bar B (t,X_t)\,dt-Z_t\;d\widetilde W_t,
 & t\in[0,T], \\[1mm]
Y_T=\phi(X_T).
\end{array}\right.
\end{equation}
Now we apply the previous results to the FBSDE \eqref{fbsde-tilde} and we get, by \eqref{bsde-int-derMall}, 
\begin{align}
\label{bsde-int-derMall-tilde-E}
\widetilde\E\langle \nabla_x Y_t^{s,x}, h\rangle
=&\widetilde\E\phi(X_T^{s,x})\int_s^T\langle \widetilde u_{h,s,x,T}(\sigma), d\widetilde W_\sigma\rangle_U
+\widetilde\E\displaystyle\int_t^T \psi(r,X_r^{s,x},Y_r^{s,x},Z_r^{s,x})\int_s^r\langle\widetilde u_{h,s,x,r}(\sigma),d\widetilde W_\sigma\rangle_U dr \nonumber \\
&+\widetilde\E\displaystyle\int_t^T Z_r^{s,x}\widetilde G(r)^{-1}\bar B(r,X_r^{s,x})\int_s^r\langle\widetilde u_{h,s,x,r}(\sigma),d\widetilde W_\sigma\rangle_Udr.
\end{align}
We take $t=s$, we recall that $\nabla_xY_{s}^{s,x}$ is deterministic and $Z_r^{s,x}=Z_r^{r,X_r^{s,x}}=\nabla_xY_r^{r,X_r^{s,x}}G(r)$ and consider the norm
\begin{align*}
\||\nabla_xYJ\||:=\sup_{t\in[T-\eta,T)}\sup_{x\in H}(T-t)^\frac12(1+|x|)^{-m}\sup_{a\in U,\  |a|\leq 1}|\langle \nabla_xY(t,t,x),Ja\rangle_H|,
\end{align*}
where $\eta\in(0,T]$ has to be properly chosen. From \eqref{bsde-int-derMall-tilde-E}, for every $h=Ja$, $a\in U$, and $s\in[T-\eta,T)$ we infer that
\begin{align*}
|\langle \nabla_xY_s^{s,x},Ja\rangle_H|
\leq & \sqrt{2}K_\phi\widetilde \E[1+|X_T^{s,x}|^{2m}]^{\frac12}|\|\widetilde u_{Ja,s,x,T}\|_{L^2([s,T],U)}+2K_\psi\int_s^T\|\widetilde u_{Ja,s,x,r}\|_{L^2([s,r];U)}dr \\
& +2K_\psi\int_s^T\widetilde \E[|X_r^{s,x}|^{2m}]^{\frac12}\|\widetilde u_{Ja,s,x,r}\|_{L^2([s,r];U)}dr \\ 
& +2L_\psi\int_s^T\widetilde \E [|Y_r^{s,x}|^2 +|\nabla_xY_r^{r,X_r^{s,x}} G(r)|^2]^{\frac12}\|\widetilde u_{Ja,s,x,r}\|_{L^2([s,r],U)}dr \\
& + \|\bar B\|_\infty\int_s^T\widetilde{\mathbb E}[|\nabla_xY_r^{r,X_r^{s,x}}J|^2]^{\frac12}\|\widetilde u_{Ja,s,x,r}\|_{L^2([s,r];U)}dr \\
\leq & \frac{CK_\phi(1+|x|^m)|a|_U}{(T-s)^\frac12}+C(K_\psi+L_\psi)(1+|x|^m)(T-s)^{\frac12}|a|_U \\
& +
C(\|\bar B\|_\infty+L_\psi\sup_{t\in[0,T]}\|\widetilde G(t)\|_{\mathcal L(U)})\||\nabla_xYJ\||\widetilde {\mathbb E}\int_s^T\frac{(1+|X_r^{s,x}|_H^m)}{(T-r)^{\frac12}(r-s)^\frac12} ds \\
\leq & \frac{C(1+|x|^m)K_\phi|a|_U}{(T-s)^\frac12}+C(K_\psi+L_\psi)(1+|x|^m)(T-s)^{\frac12}|a|_U \\
& +C(\|\bar B\|_\infty+L_\psi\sup_{t\in[0,T]}\|\widetilde G(t)\|_{\mathcal L(U)})\||\nabla_xYJ\||(1+|x|_H^m)|a|_U,
\end{align*}
for some positive constant $C$, where we have used Hypothesis \ref{ip-psiphi-li} and estimate \eqref{stima_norma_L2_utilde}. It follows that
\begin{align*}
\||\nabla_xYJ\||
\leq C(K_\phi+(K_\psi+\widetilde L_\psi)\eta)+C(\|\bar B\|_\infty+\widetilde L_\psi\sup_{t\in[0,T]}\|\widetilde G(t)\|_{\mathcal L(U)})\||\nabla_xYJ\||\eta^{\frac12}.
\end{align*}
Then, it is possible to choose $\eta\in(0,T]$ such that
\begin{align*}
\||\nabla_xYJ\||
\leq 2C(K_\phi+(K_\psi+\widetilde L_\psi)\eta).
\end{align*}
Iterating this argument, it follows that there exists a positive constant $C$, which only depends on $A,\bar B,\widetilde G,G,K_\phi,K_\psi,L_\psi$ and $T$, such that
\begin{align*}
|\langle \nabla_xY_s^{s,x},Ja\rangle_U|
\leq \frac{C(1+|x|^m)|a|_U}{(T-s)^\frac12}, \qquad s\in[0,T), \ a\in U.
\end{align*}
To prove the result in the general case, i.e., when $B$ satisfies Hypothesis \ref{hyp:base}(i) and $\phi$ and $\psi$ satisfy Hypothesis \ref{ip-psiphi-li}, can be obtained by considering the solution $(X,Y_n,Z_n)$ to the FBSDE \eqref{fbsde}, with $\phi$, $\psi$ and $\bar B$ replaced by $\phi_n$, $\psi_n$ and $\bar B_n$, $n\in\N$, respectively, and arguing as in the final part of the proof of \cite[Theorem 4.2]{futeBismut}. We simply notice that the approximating sequence $(\bar B_n)$ can be defined by means of convolution (see \cite[Lemma 4.3]{futeBismut}), that its supremum norm is $\|\bar B\|_\infty$ and that, in the proof of the convergence, the difference $Z_{n,r}^{s,x}\widetilde G(r)^{-1}\bar B_n(r,X_r^{s,x})-Z_{r}^{s,x}\widetilde G(r)^{-1}\bar B(r,X_r^{s,x})$ can be estimated by
\begin{align*}
|Z_{n,r}^{s,x}&\widetilde G(r)^{-1} \bar B_n(r,X_r^{s,x})-Z_{n,r}^{s,x}\widetilde G(r)^{-1}\bar B_n(r,X_r^{s,x})|\\
\leq & |Z_{n,r}^{s,x}\widetilde G(r)^{-1}\bar B_n(r,X_r^{s,x})-Z_{r}^{s,x}\widetilde G(r)^{-1}\bar B_n(r,X_r^{s,x})| \\
& \!+\!|Z_{r}^{s,x}\widetilde G(r)^{-1}\bar B_n(r,X_r^{s,x})-Z_{r}^{s,x}\widetilde G(r)^{-1}\bar B_n(r,X_r^{s,x})| \\
\leq & \|\bar B\|_\infty|Z_{n,r}^{s,x}-Z_{r}^{s,x}|+|Z_{r}^{s,x}B_n(r,X_r^{s,x})-Z_{r}^{s,x}B(r,X_r^{s,x})|
\end{align*}
and the result follows by means of the convergence of $(Y_{n}^{s,x},Z_{n}^{s,x})$ to $(Y^{s,x},Z^{s,x})$ in $\mathcal S^2\times \mathcal M^2$, the pointwise convergence of $(B_n)$ to $B$ and the dominated convergence theorem.
\end{proof}

\begin{remark}\label{rm:Ga} Notice that we need that the apriori estimate on the derivative of $Y$ is integrable in $(0,T)$. Since in our examples, the full derivative of $Y$ blows up as $t^{-\frac32}$ (or worst) near $0$, this is the reason why we confine ourself to only consider the derivative of $Y$ along the direction of $J$.
\end{remark}

Under a different set of assumptions, we deduce the differentiability of $Y$ along other directions.
\begin{hypothesis}
\label{hyp:operatorG1}
We assume that $G(t)=J\widetilde G(t)$, where $J\in\mathcal L(U,H)$ is the operator introduced in Hypothesis \ref{hyp:base}(i) and $\widetilde G:[0,T]\to \mathcal L(U)
$ is such that $\widetilde G(t)$ is invertible with bounded inverse for every $t\geq 0$, i.e., $\sup_{t\in[0,T]}\|\widetilde G(t)^{-1}\|_{\mathcal L(U)}<\infty$. We further assume that $J=J_1S$, $J$ being the operator introduced in Hypothesis \ref{hyp:base}$(i)$, where $J_1\in\mathcal L(U,H)$, $S\in\mathcal L(U)$, and there exists positive constants $C,\beta$, with $\beta<\frac12$, such that
\begin{align}
\label{stima_norma_L2_utilde_1}
\|\widetilde u_{J_1a,s,x,r}\|_{L^2(s,r;U)}\leq \frac{C\|a\|_U}{(r-s)^{\frac12+\beta}}, \qquad 0\leq s<r<T.
\end{align}    
\end{hypothesis}

We set 
\begin{equation*}
\bm{\Psi}:=\exp\left\{
-{\displaystyle\int_{s}^{T}}
\left\langle \widetilde G(r)^{-1}P\bar B\left( r, X_r^{s,x}\right)  ,dW_{r}\right\rangle_U -\frac{1}{2}
{\displaystyle\int_{s}^{T}}
\left\|  \widetilde G(r)^{-1}P\bar B\left( r, X_r^{s,x}\right)    \right\|  _{U}^{2}ds\right\},
\end{equation*}
where $P$ is the projection on ${\rm Ker}(S)^\perp$, $\bar B$ is the function introduced in Hypothesis \ref{hyp:base}(i) and we denote by $\widetilde \E $ the expectation with respect to the probability measure $\widetilde \P$ defined by $\dfrac{d\widetilde \P}{d\P}:=\bm{\Psi} $, and by $\widetilde W$ the $U$-cylindrical Wiener process defined as $d\widetilde W_t=dW_t+\widetilde G(t)^{-1}P\bar B(t,X^{s,x}_t)dt$.

\begin{theorem}\label{teoBismutLip1}
Assume that Hypotheses \ref{hyp:base}, \ref{ip-psiphi-li} and \ref{hyp:operatorG1} and condition \eqref{link_gat_der_mall_der} hold true.
Then for every $0\leq s< t\leq T$ and $x,h\in H$ 
we get the following Bismut formula
\begin{align}\label{Bismut-lip2}
\widetilde{\E}\left[ \langle \nabla_x\,Y^{s,x}_t,h\rangle \right]
= & 
\widetilde{\E}\int_t^T\psi\left(r,X_r^{s,x},Y_r^{s,x},Z_r^{s,x}\right)
\int_s^r\langle \widetilde u_{h,s,x,r}(\sigma),d\widetilde W_\sigma\rangle_{U}\,dr \notag \\
&+\widetilde{\E}\displaystyle\int_t^T Z_r\widetilde G(r)^{-1}P\bar B(r,X_r^{s,x})\int_s^r\langle\widetilde u_{h,s,x,r}(\sigma),d\widetilde W_\sigma\rangle_U dr \notag \\
& 
+\widetilde{\E}\left[  \phi(X_T^{s,x})\int_s^T\langle \widetilde u_{h,s,x,T}(\sigma),d\widetilde W_\sigma\rangle_{U}\right].
\end{align}
Moreover, if $h=Ja$, $a\in U$, then equality \eqref{Bismut-lip} holds true also for $t=s$ and there exists a positive constant $C$, which depends on $A,\bar B,\widetilde G,J_1,S,K_\phi,K_\psi,L_\psi$ and $T$, such that
\begin{align}
\label{stima_Gder_Y_lip_1}
|\langle \nabla_xY_s^{s,x},Ja\rangle_U|
\leq \frac{C(1+|x|^m)|a|_U}{(T-s)^{\frac12+\beta}}, \qquad s\in[0,T), \ a\in U.
\end{align}
\end{theorem}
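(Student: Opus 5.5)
The plan is to repeat the proof of Theorem \ref{teoBismutLip}, changing only the Girsanov transformation and the a priori estimate. The first reduction is to smooth data: assume $\phi$, $\psi$ and $\bar B$ are differentiable, and remove this at the end through Lemma \ref{lem:approx_psi_phi}. The approximation of $\bar B$ by convolutions keeps $\|P\bar B_n\|_\infty\le\|\bar B\|_\infty$, so the limiting argument at the end of the proof of Theorem \ref{teoBismutLip} carries over verbatim.

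\emph{Case $\bar B=0$.} The computation leading to \eqref{bsde-int-derMall} used only \eqref{link_gat_der_mall_der}, the Markov representation $Y_r^{s,x}=v(r,X_r^{s,x})$, $Z_r^{s,x}=\bar v(r,X_r^{s,x})$, the chain rule for $D$, and the duality between $D$ and $\delta$. Hence it holds unchanged.

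\emph{Girsanov step.} Decompose $\bar B=P\bar B+(I-P)\bar B$. Since $(I-P)\bar B$ takes values in ${\rm Ker}(S)$, we get $J(I-P)\bar B=J_1S(I-P)\bar B=0$, so $B=J\bar B=JP\bar B$. Also $JP\bar B=J\widetilde G(\widetilde G^{-1}P\bar B)=G\,\widetilde G^{-1}P\bar B$. The shift $d\widetilde W_t=dW_t+\widetilde G(t)^{-1}P\bar B(t,X_t)dt$ is bounded, because $\widetilde G^{-1}$ is uniformly bounded and $\bar B$ is bounded. Novikov's condition therefore holds, $\bm\Psi$ is a density, and under $\widetilde\P$ the system becomes \eqref{fbsde-tilde} with $\bar B$ replaced by $P\bar B$. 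The forward equation now has no drift. Applying the $\bar B=0$ formula, with generator $\psi(t,x,y,z)+z\widetilde G(t)^{-1}P\bar B(t,x)$, gives \eqref{Bismut-lip2} for $t>s$.

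One point needs care here: $\widetilde u_{h,s,x,r}$ is deterministic and solves \eqref{link_gat_der_mall_der}, which involves only $A$ and $G$. It is therefore the same function for the drift-free system under $\widetilde\P$. The only subtlety is that the modified generator is still Lipschitz in $z$, which holds because $P\bar B$ is bounded.

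\emph{The case $t=s$ with $h=Ja$, and the estimate.} This is the main obstacle. The difficulty is that \eqref{stima_norma_L2_utilde_1} controls $\widetilde u$ only along directions $J_1a$, not $Ja$. Since $Ja=J_1(Sa)$, the formula for $h=Ja$ coincides with the formula for $h=J_1(Sa)$. Together with $\|Sa\|\le\|S\|\|a\|$, this gives
\[
\|\widetilde u_{Ja,s,x,r}\|_{L^2(s,r;U)}\le C\|S\|\,|a|_U\,(r-s)^{-\frac12-\beta}.
\]
The exponent $\frac12+\beta<1$ keeps this integrable in $r$ near $s$, which is what allows the formula to pass to $t=s$ by letting $t\downarrow s$ using dominated convergence.

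For the a priori bound, introduce the weighted norm
\[
\||\nabla_xYJ\||:=\sup_{t\in[T-\eta,T)}\sup_{x\in H}(T-t)^{\frac12+\beta}(1+|x|)^{-m}\sup_{|a|_U\le1}|\langle\nabla_xY(t,t,x),Ja\rangle_H|.
\]
Then estimate the three terms of \eqref{Bismut-lip2} at $t=s$ as in Theorem \ref{teoBismutLip}, using Hypothesis \ref{ip-psiphi-li}, the moment bound \eqref{stima_sup_X} under $\widetilde\P$, and the bound on $\widetilde u$ above. The $Z$-dependent terms are handled through $Z_r=\nabla_xY_r^{r,X_r}J\widetilde G(r)$, which is controlled by $\||\nabla_xYJ\||\,(T-r)^{-\frac12-\beta}$.

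The resulting convolution integral is
\[
\int_s^T (T-r)^{-\frac12-\beta}(r-s)^{-\frac12-\beta}\,dr = c_\beta\,(T-s)^{-2\beta}.
\]
It is finite because $\frac12+\beta<1$. Multiplying by $(T-s)^{\frac12+\beta}$ leaves a factor $\eta^{\frac12-\beta}$, which is small when $\eta$ is small. The $\phi$-term contributes $CK_\phi(1+|x|^m)|a|_U$, and the $\psi(r,x,0,0)$-terms contribute a positive power of $\eta$. Choosing $\eta$ so that the coefficient of $\||\nabla_xYJ\||$ is at most $\frac12$ and absorbing, then iterating backward on intervals of length $\eta$ as before, yields \eqref{stima_Gder_Y_lip_1}.
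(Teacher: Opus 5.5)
Your proof is correct and follows the paper's overall plan: smooth data with $\bar B=0$, a Girsanov shift by $\widetilde G^{-1}P\bar B$, weighted-norm absorption via the Beta integral that yields the factor $\eta^{\frac12-\beta}$, backward iteration, and approximation. The one real difference is the direction in which you close the fixed-point estimate.

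The paper works with $\||\nabla_xYJ_1\||$ rather than $\||\nabla_xYJ\||$. It writes $G(r)=J_1S\widetilde G(r)$ and $B=J_1S\bar B$. Both $Z$-terms then become $\nabla_xYJ_1$ composed with $S\widetilde G(r)$ and with $SP\bar B=S\bar B$, respectively, so the constants are $\sup_t\|S\widetilde G(t)\|$ and $\|S\bar B\|_\infty$. The control bound \eqref{stima_norma_L2_utilde_1} is used for $h=J_1a$ exactly as assumed.

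This gives the stronger estimate $|\langle\nabla_xY_s^{s,x},J_1a\rangle_H|\le C(1+|x|^m)|a|_U(T-s)^{-\frac12-\beta}$. That is differentiability along the larger family of directions $J_1a$; in the damped wave example these are $(0,a)$ rather than $(0,\Lambda^{-\varepsilon}a)$. \eqref{stima_Gder_Y_lip_1} then follows from $Ja=J_1(Sa)$, at the cost of a factor $\|S\|$.

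You instead close directly in the $J$-norm. You transfer the control bound through $\widetilde u_{Ja}=\widetilde u_{J_1(Sa)}$ and bound the $Z$-terms with $\sup_t\|\widetilde G(t)\|$ and $\|\bar B\|_\infty$. This is slightly more direct for the statement as written. However, it gives up the $J_1$-directional bound, which cannot be recovered from a $J$-directional one when $S$ is not onto.
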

\begin{proof}
The proof follows the line of that of Theorem \ref{teoBismutLip}, noticing that $G(t)=J_1S\widetilde G(t)$ and $B(t,x)=J_1S\bar B(t,x)$ for every $t\in[0,T]$ and $x\in H$. Hence, we can apply Girsanov theorem to obtain system \eqref{fbsde-tilde} with $d\widetilde W_t=dW_t+\widetilde G(t)^{-1}P\bar B(t,X_t)dt$.

Then, one considers the norm
\begin{align*}
\||\nabla_xYJ_1\||:=\sup_{t\in[T-\eta,T)}\sup_{x\in H}(T-t)^{\frac12+\beta}(1+|x|)^{-m}\sup_{a\in U,\  |a|\leq 1}|\langle \nabla_xY(t,t,x),J_1a\rangle_H|,
\end{align*}
where $\eta\in(0,T]$ has to be properly chosen, obtaining
\begin{align*}
|\langle \nabla_xY_s^{s,x},J_1a\rangle_H|
\leq & \sqrt{2}K_\phi\widetilde \E[1+|X_T^{s,x}|^{2m}]^{\frac12}|\|\widetilde u_{J_1a,s,x,T}\|_{L^2([s,T],U)}+2K_\psi\int_s^T\|\widetilde u_{J_1a,s,x,r}\|_{L^2([s,r];U)}dr \\
& +2K_\psi\int_s^T\widetilde \E[|X_r^{s,x}|^{2m}]^{\frac12}\|\widetilde u_{J_1a,s,x,r}\|_{L^2([s,r];U)}dr \\ 
& +2L_\psi\int_s^T\widetilde \E [|Y_r^{s,x}|^2 +|\nabla_xY_r^{r,X_r^{s,x}} G(r)|^2]^{\frac12}\|\widetilde u_{J_1a,s,x,r}\|_{L^2([s,r],U)}dr \\
& + \|S\bar B\|_\infty\int_s^T\widetilde{\mathbb E}[|\nabla_xY_r^{r,X_r^{s,x}}J_1|^2]^{\frac12}\|\widetilde u_{J_1a,s,x,r}\|_{L^2([s,r];U)}dr \\
\leq & \frac{C(1+|x|^m)K_\phi|a|_U}{(T-s)^{\frac12+\beta}}+C(K_\psi+L_\psi)(1+|x|^m)(T-s)^{\frac12-\beta}|a|_U \\
& +C(\|S\bar B\|_\infty+L_\psi\sup_{t\in[0,T]}\|S\widetilde G(t)\|_{\mathcal L(U)})\||\nabla_xYJ_1\||(1+|x|_H^m)|a|_U(T-s)^{-2\beta}.
\end{align*}
This gives
\begin{align*}
\||\nabla_xYJ_1\||
\leq C(K_\phi+(K_\psi+\widetilde L_\psi)\eta)+C(\|S\bar B\|_\infty+\widetilde L_\psi\sup_{t\in[0,T]}\|S\widetilde G(t)\|_{\mathcal L(U)})\||\nabla_xYJ_1\||\eta^{\frac12-\beta}.
\end{align*}
The conclusion of the proof can be derived arguing as above.
\end{proof}

\begin{remark}
Notice that, having proved a Bismut-Elworthy type formula for the FBSDE \eqref{fbsde} as a byproduct we get a Bismut-Elworthy type formula for the transition semigroup of a perturbed Ornstein Uhlenbeck process, without  asking differentiability of the drift. Indeed we can proceed as in the proof of Theorem \ref{teoBismutLip}: we start from the forward backward system \eqref{fbsde} with $\psi=0$ and we perform the Girsanov change of measure to arrive at
\begin{equation*}
\left\{\begin{array}{ll}\dis  dX_t =
AX_tdt+G(t)d\widetilde W_t,&  t\in
[s,T]\subset [0,T],\\[1mm]
X_\tau=x, & \tau\in[0,s], \\[1mm]
-dY_t =Z_t\bar B(t,X_t)dt-Z_t\;d\widetilde W_t,
 & t\in[0,T], \\[1mm]
  Y_T=\phi(X_T),
\end{array}\right.
\end{equation*}
getting the Bismut formula \eqref{Bismut-lip} with $\psi=0$:
\begin{align*}
\widetilde{\E}\left[ \langle \nabla_x\,Y^{s,x}_t,h\rangle \right]
= 
&+\widetilde{\E}\displaystyle\int_t^T Z_r\widetilde G(r)^{-1}\bar B(r,X_r^{s,x})\int_s^r\langle\widetilde u_{h,s,x,r}(\sigma),d\widetilde W_\sigma\rangle_U dr \notag \\
& 
+\widetilde{\E}\left[  \phi(X_T^{s,x})\int_s^T\langle \widetilde u_{h,s,x,T}(\sigma),d\widetilde W_\sigma\rangle_{U}\right].
\end{align*}
Notice that in the case of $G$ invertible with bounded inverse the result is true also in the case of multiplicative noise, that is $G$ can depend on also on $x$ 
and this is an improvement towards \cite{BoFu} and \cite{Ma07} since we don't require the drift to be differentiable.
\newline 
We underline that if equation \eqref{eq_nonlin_1} is well posed under regularity assumptions on the drift weaker than Lipschitz continuity, then Bismut formula \eqref{Bismut-lip} proved in Theorem \ref{teoBismutLip} remains true. This is the case of the wave equation, which is well posed if the drift is $\alpha$-H\"older continuous with $\alpha\in\left(\frac23,1\right)$, see \cite{MaPr,MaPr24}, and of the damped wave equation, which is well posed if the drift is $\theta$-H\"older continuous for a suitable $\theta\in (0,1)$, see \cite{AddBig24,AddBig26,AddMasPri23}.
\end{remark}

\section{Connections with PDEs}
\label{sec:pde}

In this section we aim to apply the Bismut formula obtained in Theorem \ref{teoBismutLip} to the solution of the semilinear Kolmogorov equation.
\newline We start by proving that $Z$ is the directional derivative of $Y$, in a sense that we make precise in the following proposition. %\textcolor{red}{
Moreover, we recall that, due to the invertibility assumption on $\widetilde G(s)$ (see Hypothesis \ref{hyp:operatorG}), derivations along $G(s)$ are equivalent with derivations along $G$.

\begin{proposition}\label{prop-identif-Z}
Under the assumptions of Theorem \ref{teoBismutLip}, $\forall\, s\in[0,T)$ and almost every $x\in H$,
\begin{equation}\label{identif-Z}
Z^{s,x}_s = \nabla_x\,Y^{s,x}_sG(s).
\end{equation}
    
\end{proposition}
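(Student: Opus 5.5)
Our strategy is the standard one from \cite{futeBismut}: prove \eqref{identif-Z} first for regular coefficients, then pass to the limit with the approximations of Lemma \ref{lem:approx_psi_phi} (and the convolution approximations $\bar B_n$ used at the end of the proof of Theorem \ref{teoBismutLip}).

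\textbf{Step 1: smooth coefficients.} Assume $\phi_n,\psi_n,\bar B_n$ are as in Lemma \ref{lem:approx_psi_phi}. Then, as recalled in the proof of Theorem \ref{teoBismutLip} (via \cite[Proposition 3.6]{futeBismut}), $v_n(t,\xi)=Y_t^{n,t,\xi}$ is Gâteaux differentiable in $\xi$ and $Z^{n,s,x}_r=\nabla_\xi v_n(r,X_r^{s,x})G(r)$ for a.e.\ $r$, $\mathbb P$-a.s. The identification comes from the Malliavin chain rule $D_\sigma Y_r=\nabla_\xi v_n(r,X_r)D_\sigma X_r$ together with $D_rX_r=G(r)$ and $Z_r=D_rY_r$. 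In particular $Z_s^{n,s,x}=\nabla_xY_s^{n,s,x}G(s)$. Because of the factorization $G(s)=J\widetilde G(s)$ with $\widetilde G(s)$ invertible, only derivatives in directions $Ja$ enter, and these are exactly the ones controlled by \eqref{stima_Gder_Y_lip}.

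\textbf{Step 2: passage to the limit in $Z$.} By BSDE stability, $(Y^n,Z^n)\to(Y,Z)$ in $\mathcal S^2\times\mathcal M^2$, so along a subsequence
\[
Z_r^{n,s,x}\to Z_r^{s,x}\quad\text{in } L^2(\Omega\times[s,T];U).
\]
Writing $Z_r^{n,s,x}=\nabla v_n(r,X_r^{s,x})J\widetilde G(r)$, it suffices to show that $\nabla v_n(r,\xi)J$ converges pointwise in $(r,\xi)$ to $\nabla v(r,\xi)J$. Then, via the law of $X_r^{s,x}$ and Fubini, we obtain $Z_r^{s,x}=\nabla v(r,X_r^{s,x})G(r)$ for a.e.\ $r$.

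\textbf{Step 3: pointwise convergence of the derivatives.} This is done with the Bismut formula \eqref{Bismut-lip} at $t=s$ with $h=Ja$, written for both $n$ and the limit. The right-hand sides converge by dominated convergence, using:
\begin{itemize}
\item the uniform polynomial bounds in Lemma \ref{lem:approx_psi_phi}(iii);
\item the uniform bound \eqref{stima_Gder_Y_lip}, applied to the term $Z_r\widetilde G^{-1}\bar B$;
\item the integrability of $\|\widetilde u_{Ja,s,x,r}\|_{L^2}\le C(r-s)^{-1/2}$ from \eqref{stima_norma_L2_utilde}.
\end{itemize}
Note that the Girsanov density also depends on $n$ through $\bar B_n$; its convergence in $L^p$ follows from the boundedness $\|\bar B_n\|_\infty\le\|\bar B\|_\infty$. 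This yields $\langle\nabla_xY^{n,s,x}_s,Ja\rangle\to\langle\nabla_xY^{s,x}_s,Ja\rangle$ for every $(s,x)$. The Gâteaux differentiability of $v$ along $J$ in the limit is itself part of Theorem \ref{teoBismutLip}.

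\textbf{Step 4: from $r>s$ to $r=s$.} Identifying $Z_r^{s,x}$ for a.e.\ $r$ still has to be transferred to the value $Z^{s,x}_s$. Since $Z$ is only defined $dr\otimes d\mathbb P$-a.e., we interpret $Z^{s,x}_s$ via the Markov property as $\bar v(s,x)$, where $\bar v$ is the deterministic function with $Z_r^{s,x}=\bar v(r,X_r^{s,x})$. Step 2 gives $\bar v(r,\cdot)=\nabla v(r,\cdot)G(r)$ almost everywhere with respect to the law of $X_r^{s,x}$. This is also where the restriction to "almost every $x$" in the statement comes from.

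The main obstacle is Step 4. Making this precise requires either continuity of $\nabla v(\cdot,\cdot)J$ in $(r,\xi)$, which one would try to derive from the Bismut representation and continuity of $\widetilde u$ in $r$, or a measure-theoretic argument over $(s,x)$ that only yields the identity off a null set. We would try the second route: compare the two representations of $Z$ for the family indexed by initial times and use Fubini in $(s,x)$.
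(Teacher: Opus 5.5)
\textbf{Assessment: genuine gap in Step 4.}

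Your Steps 1--3 reproduce the paper's argument. You use smooth approximations and the smooth-case identification $Z^{n,s,x}_r=\nabla v_n(r,X^{s,x}_r)G(r)$ from \cite{FuhTes2002} together with the Markov property. You obtain convergence of $\langle\nabla v_n(r,\xi),Ja\rangle$ from \eqref{Bismut-lip} at $t=s$, and extract an a.e.\ convergent subsequence of $Z^n$ from the $\mathcal M^2$-convergence. This gives $Z^{s,x}_r=\nabla v(r,X^{s,x}_r)G(r)$ for $dr\otimes d\P$-a.e.\ $(r,\omega)$, as in the paper.

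Two smaller points. First, since you keep $X$ independent of $n$, $\bar B_n$ may enter only the generator $\psi_n+z\widetilde G^{-1}\bar B_n$ of the transformed system \eqref{fbsde-tilde}, under the fixed measure $\widetilde\P$. If it entered the forward drift, $X$ would depend on $n$ as well, and pointwise convergence of $\nabla v_n J$ would no longer suffice. Second, in Step 3 the bound \eqref{stima_Gder_Y_lip}, uniform in $n$, is needed also for the $z$-dependence of $\psi_n$, not only for the $\bar B$ term. The reason is that \eqref{stima_norma_L2_utilde} only gives $\|\widetilde u_{Ja,s,x,r}\|^2\le C(r-s)^{-1}$, which is not integrable at $r=s$, so the $\mathcal M^2$-convergence of $Z^n$ cannot be used directly there.

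The genuine gap is Step 4. You leave it open, and the measure-theoretic route you say you would take cannot deliver the statement.
\begin{itemize}
\item An identity valid $dr\otimes d\P$-a.e.\ never sees the single instant $r=s$.
\item A function $\bar v$ with $Z^{s',x'}_r=\bar v(r,X^{s',x'}_r)$ is determined only up to modifications invisible in $\mathcal M^2$. Redefining $\bar v$ on the whole slice $\{s\}\times H$ is such a modification, so setting $Z^{s,x}_s:=\bar v(s,x)$ carries no information.
\item A Fubini argument over initial data gives the identity only for a.e.\ $s$, and only off a set of $x$ that is null for some auxiliary law (e.g.\ that of $X^{s',x'}_s$). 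The statement, however, is for every $s$.
\end{itemize}

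The missing idea is the first alternative you list and then set aside; it is the one the paper uses. The Bismut representation \eqref{Bismut-lip} at $t=s$, $h=Ja$, should show that $(r,\xi)\mapsto\langle\nabla v(r,\xi),Ja\rangle$ is continuous on $[0,T)\times H$. This needs continuity in the initial data of $X^{s,x}$, $Y^{s,x}$, $Z^{s,x}$ and of $\widetilde u_{Ja,s,x,\cdot}$. It also uses the integrable majorant $C(1+|x|^m)(T-r)^{-1/2}(r-s)^{-1/2}$ coming from \eqref{stima_Gder_Y_lip} and \eqref{stima_norma_L2_utilde}.

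Given this continuity, $r\mapsto\nabla v(r,X^{s,x}_r)G(r)$ is a distinguished version of $Z^{s,x}$, with continuous trajectories when $\widetilde G$ is continuous, as in the examples. Its value at $r=s$ is $\nabla v(s,x)G(s)=\nabla_xY^{s,x}_sG(s)$ for every $s$ and every $x$, so no exceptional set is needed. The paper only asserts this continuity (``by \eqref{Bismut-lip} it turns out that $Z$ has continuous trajectories''). Still, this is the step that gives $Z^{s,x}_s$ a meaning and closes the argument, and you need to prove it rather than bypass it.
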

\dim We start by recalling that  $\forall s\in [0,T],\,x\in H$, the function $(s,x)\mapsto Y_s^{s,x}$ is deterministic and we set
\begin{equation*} 
%\label{v}
v(s,x):=Y_s^{x,x}, \qquad x\in H, \ s\in[0,T].
\end{equation*}
By the Markov property, we have also that
$Y_t^{s,x}=Y_t^{t,X_t^{s,x}}=v(t,X_t^{s,x})$ for every $0\leq s<t\leq T$ and $x\in H$. 
%%%%%%%%%%%%%
%Moreover, see \cite{FuhTes2002} and also \cite[Proposition 3.6(iii)]{futeBismut}, if the coefficients are differentiable  for every $s\in [0,T]$ and $x\in H$, $Z_s^{s,x} =\nabla_x v(s,x)G(s)$ (see \cite[Proposition 3.6(iii)]{futeBismut}), the function $(t,y)\mapsto Z_t^{t,\xi}$ is deterministic. We set
%\begin{equation} \label{vZ}Z_t^{t,\xi}:=\bar v(t,\xi)
%end{equation}
%and, again by the Markov property, we have also that$Z_t^{s,x}=Z_t^{t,X_t^{s,x}}=\bar v(t,X_t^{s,x})$
%%%%%%%%%%%%%%%%%%%%%%%%%%%%
It is well known (see \cite{FuhTes2002}) that if the coefficients $\psi$, $\bar B$ and $\phi$ are differentiable, then $Z_s^{s,x} =\nabla_x Y^{s,x}_s G(s)$.
We have just shown in the proof of Theorem \ref{teoBismutLip} that, with coefficients $\psi$ and $\phi$ only continuous, under the assumptions of Theorem \ref{teoBismutLip}, $x\rightarrow Y_s^{s,x}$ is differentiable in the directions $Ga,\,a\in U$. 
\newline Now let $\phi$ and $\psi$ be approximated by their inf-sup convolutions $\phi_n$ and $\psi_n$, and let  $(Y^{n,s,x},Z^{n,s,x})$ be the solution to the FBSDE in (\ref{fbsde}) with generator $\psi_n$ in the place of $\psi$ and final datum $\phi_n$ in the place of $\psi$; again by theorem \ref{teoBismutLip} for every $a \in U$ and every $s\in[0,T)$ the sequence $(\nabla_xY^{n,s,x}_s G(s)a)_{n\in\N}$ converges to $\nabla_xY^{s,x}_s Ga$ almost surely in $\Omega$, which implies that 
$$
\nabla_xY^{n,s,x}_s G(s)a\rightarrow \nabla_xY^{s,x}_s G(s)a.
$$
%By the aforementioned identification of $Z$ with the directional derivative of $Y$ we have that $Z_s^{n,s,x} =\nabla_x Y^{n,s,x}_s G(s),\; \text{ for a.a. } s\in[0,T]$.
Moreover, we know that, by the aforementioned identification of $Z^n$ (see \cite{FuhTes2002}), we have that for every $s\in[0,T]$ and $x\in H$, 
\begin{equation}\label{identif-Z-n}
Z^{n,s,x}_s= \nabla_xY^{n,s,x}_s G(s),\quad \P \text{ a.s.}\end{equation}
By the Markov property we know that for any $0\leq s \leq t\leq T$.
\begin{equation}\label{identif-Z-n-1}
Z^{n,s,x}_t=    Z^{n,t,X_t^{s,x}}_t= \nabla Y^{n,t,X_t^{s,x}}_t G(t),
\end{equation}
and so by \eqref{Bismut-lip} it turns out that $Z$ has continuous trajectories. Moreover, for every $0\leq s\leq t\leq T$ it converges to $\nabla Y^{n,t,X_t^{s,x}}_t G(t)$ $\mathbb P$-a.s. Since $Z^{n,s,x} \rightarrow Z^{s,x}$ in $\mathcal M^2$ and then, passing to a subsequence, $Z^{n_j,s,x}_t \rightarrow Z^{s,x}_t,\,\mathbb P-$ a.s. for a.a. $t\in [s,T]$, we deduce that $Z^{s,x}_s=\nabla Y^{s,x}_s G(s)$ $\P -$ a.s. for a.a. $t\in [0,T]$.
\qed

\subsection{Solution of the related Kolmogorov equation}

Now we consider the semilinear nonautonomous Kolmogorov equation associated to the process $X^{s,x}$ solution to \eqref{eq_nonlin_1}. Namely, let $\mathcal L(s) $, at least formally, be given be
$$
(\call(s) f)(x)=\frac{1}{2}(Tr G(s)G^*(s) \nabla^2 f)(x)+\<Ax,\nabla f(x)\>+\<B(s,x),\nabla f(x)\>.
$$
and the associated Cauchy problem
\begin{equation}
\left\{
\begin{array}{ll}
\displaystyle\frac{\partial v}{\partial s}(s,x)=-\call(s) v\left( s,x\right)
+\psi\left( s,x,v(s,x),\nabla v(s,x)G(s)   \right),& s\in\left[  0,T\right]
,\text{ }x\in H, \\[3mm]
v(T,x)=\phi\left(  x\right)  , & x\in H.
\end{array}
\right.  
\label{Kolmo_1}
\end{equation}
We introduce the notion of mild solution of the non linear Kolmogorov
equation (\ref{Kolmo_1}), see e.g. \cite{DP3}. 
%Let $P_{s,t},\,s\leq t\leq T$,the transition semigroup related to the process $X^{t,x}$ solution of the forward equation (\ref{forward}),namely, for every bounded and measurable function $\phi:H\rightarrow\R$
%\[
 %P_{t,\tau}[\phi](x)=\E\phi(X_\tau^{t,x}).\]
Since to $(\call(s))_{s\in[0,T]}$ it is, at least formally, associated the evolution operator 
$(P_{s,t})_{0\leq s\leq t\leq T}$, the variation of constants formula for (\ref{Kolmo_1}) gives:%
\begin{equation}
v(s,x)=P_{s,T}\left[  \phi\right]  \left(  x\right)  +\int_{s}^{T}%
P_{s,t}\left[  \psi(t,\cdot, v(t,\cdot), \nabla v\left( t,\cdot\right)G(t) 
\right]  \left(  x\right)  dt,
\text{\ \ }s\in\left[  0,T\right]  ,\text{ }x\in H. \label{solmildkolmo}%
\end{equation}
We can give the definition of mild solution for the non linear Kolmogorov equation (\ref{Kolmo_1}).
\begin{definition}
\label{defsolmildkolmo}A function $v:\left[  0,T\right]  \times H\rightarrow\mathbb{R}$ is a mild
solution of the non linear Kolmogorov equation (\ref{Kolmo_1}) if $v$ is continuous and $$ (t,x)\mapsto \dfrac{v(t,x)}{(1+\vert x\vert ^2)^{K/2}}$$ belongs to
$\in C_{b}\left(  \left[  0,T\right]  \times H\right)  $, it is differentiable in the directions of $J$ 
%(\textcolor{red}{mettere delle notazioni ancui fare riferimento?})
and equality (\ref{solmildkolmo}) holds.
\end{definition}

\begin{theorem}\label{teo_fey_kac} Assume that Hypotheses \ref{hyp:base} and \ref{ip-psiphi-li} and condition \eqref{link_gat_der_mall_der} hold true, let $(X^{s,x},Y^{s,x},Z^{s,x})$ be the solution of the forward-backward system (\ref{fbsde}) and let
\begin{equation}\label{U}
 \widetilde U_{Ja,s,x,r}:=\int_s^r \widetilde u_{Ja,s,x,r} (t)\,dW_t.
\end{equation}
Then, the nonlinear Kolmogorov equation \eqref{Kolmo_1} has a unique mild solution $v$ given by the formula
$$v(s, x) = Y_s^{s,x} ,\qquad     (s, x) \in [0, T ] \times H.$$
 Moreover, we have, $\P$-a.s.,
\[
Y_s^{t,x} = v(s, X_s^{t,x} ),\quad       Z_s^{t,x} =\nabla_x v(s, X_s^{t,x} )\nabla_x  X_s^{t,x}G(s).
\]
\
and 
\begin{equation}\label{stime-teoKolmo}
\vert v(s,x)\vert\leq C,\qquad \vert \nabla_x Jv(s,x)\vert\leq C\left(T-s\right)^{-1/2},
\end{equation}
with
\begin{equation}\label{Bismut-teoKolmo}
 <\nabla_x\,v(s,x),Ja>_H=
\E\int_s^T\psi\left(r,X_r^{s,x}, v(r,X_r^{s,x}),\nabla_x v(r,X_r^{s,x})G(r)\right) \widetilde U_{Ja,s,x,r}\,dr
+\E\left[  \phi(X_T^{s,x}) \widetilde U_{Ja,s,x,T}\right].
\end{equation}
\end{theorem}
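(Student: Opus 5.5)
We prove existence by checking that $v(s,x):=Y_s^{s,x}$ is a mild solution, and uniqueness by representing any mild solution through the FBSDE (\ref{fbsde}).

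\emph{Step 1: properties of $v$.} Since $(X^{s,x},Y^{s,x},Z^{s,x})$ is adapted to the filtration generated by the increments of $W$ after time $s$, the quantity $Y_s^{s,x}$ is deterministic. By Theorem \ref{teo-ex-bsde}, $|v(s,x)|\le C(1+|x|^m)$, so $v(s,x)/(1+|x|^2)^{K/2}$ is bounded for $K\ge m$. Continuity of $v$ follows from the continuous dependence of $X^{s,x}$ on $(s,x)$ in $L^p_{\mathcal P}(\Omega;C([0,T];H))$ and standard stability estimates for Lipschitz BSDEs. The Markov property gives $Y_t^{s,x}=v(t,X_t^{s,x})$. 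Theorem \ref{teoBismutLip} shows that $v(s,\cdot)$ is differentiable along $Ja$ and yields (\ref{stime-teoKolmo}) through (\ref{stima_Gder_Y_lip}). Proposition \ref{prop-identif-Z}, combined with the Markov property, gives $Z_t^{s,x}=\nabla_x v(t,X_t^{s,x})G(t)$. Since $G(t)=J\widetilde G(t)$, only derivatives along $J$ are involved. Formula (\ref{Bismut-teoKolmo}) is then (\ref{bsde-int-derMall}) with $t=s$ and $B=0$, after substituting $Y_r=v(r,X_r)$ and $Z_r=\nabla v(r,X_r)G(r)$. When $B\neq0$ we use the Girsanov-transformed version (\ref{bsde-int-derMall-tilde-E}); the stated formula is read in that framework or with $B=0$.

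\emph{Step 2: $v$ solves (\ref{solmildkolmo}).} Take $t=s$ in (\ref{bsde-int}) and take expectations. The stochastic integral $\int Z\,dW$ is a true martingale because $Z\in\mathcal M^2$. This gives
\[
v(s,x)=\E\phi(X_T^{s,x})+\int_s^T\E\,\psi\big(r,X_r^{s,x},v(r,X_r^{s,x}),\nabla v(r,X_r^{s,x})G(r)\big)dr .
\]
By the definition of $P_{s,r}$, this is exactly (\ref{solmildkolmo}). The integrand is integrable in $r$ because $|\nabla v(r,\cdot)G(r)|\le C(1+|x|^m)(T-r)^{-1/2}$ and the moments of $X$ are controlled by (\ref{stima_sup_X}).

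\emph{Step 3: uniqueness.} Let $\bar v$ be another mild solution, differentiable along $J$, and set $\bar Y_t:=\bar v(t,X_t^{s,x})$. We construct $\bar Z$ by martingale representation of the square-integrable martingale
\[
M_t=\E\Big[\phi(X_T)+\int_s^T\psi\big(r,X_r,\bar v(r,X_r),\nabla\bar v(r,X_r)G(r)\big)dr\,\Big|\,\mathcal F_t\Big].
\]
The Markov property together with (\ref{solmildkolmo}) gives $M_t=\bar Y_t+\int_s^t\psi(\dots)dr$, and we write $M_t=M_s+\int_s^t\bar Z_r\,dW_r$.

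The key point, and the main obstacle, is to identify $\bar Z_r=\nabla\bar v(r,X_r)G(r)$. This is the only step where directional differentiability of the mild solution is used, and $\bar v$ is only known to be differentiable along $J$. The natural route is to compute the joint quadratic variation of $\bar Y$ and $W$. Equivalently, we apply the identification already used in Proposition \ref{prop-identif-Z} to the linear BSDE with frozen generator $g(r,x)=\psi(r,x,\bar v,\nabla\bar vG)$: for this BSDE, Theorem \ref{teoBismutLip} with generator independent of $(y,z)$ shows that its $Z$ equals the $J$-directional gradient of its value function, and that value function is $\bar v$ by (\ref{solmildkolmo}). With this identification, $(\bar Y,\bar Z)$ solves the BSDE in (\ref{fbsde}). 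Uniqueness in Theorem \ref{teo-ex-bsde} then gives $\bar Y=Y$, hence $\bar v(s,x)=\bar Y_s=Y_s^{s,x}=v(s,x)$.
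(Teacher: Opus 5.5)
Your argument is correct in outline, and for existence you take a different route from the paper.

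\textbf{Existence: how the two routes differ.} The paper approximates $\phi,\psi$ by the inf-sup convolutions of Lemma \ref{lem:approx_psi_phi}. It uses the smooth-data identities $v^n(s,x)=Y^{n,s,x}_s$, $\nabla v^nG=Z^n$, and passes to the limit inside the mild equation \eqref{kolmo-mild-n}. This tacitly requires pointwise convergence of $\nabla v^n(r,\cdot)G(r)$ and a dominating function. Both come from the Bismut formula, from \eqref{stima_Gder_Y_lip} holding uniformly in $n$ (Lemma \ref{lem:approx_psi_phi}(iii)), and from integrability of $(T-r)^{-1/2}$.

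You instead take the identification $Z_r^{s,x}=\nabla v(r,X_r^{s,x})G(r)$, $d\P\otimes dr$-a.e., from Proposition \ref{prop-identif-Z} and the Markov property. You then obtain \eqref{solmildkolmo} just by taking expectations in \eqref{bsde-int} at $t=s$ and using Fubini. Integrability is automatic from $Y\in\mathcal S^2$, $Z\in\mathcal M^2$ and the growth of $\psi$, so the gradient bound you invoke is not even needed there.

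Your route needs no second passage to the limit beyond the one already inside Theorem \ref{teoBismutLip} and Proposition \ref{prop-identif-Z}. The paper's route recovers the mild equation as a limit of the smooth ones, at the price of controlling $\nabla v^nG$ uniformly in $n$.

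\textbf{Uniqueness and reading of the statement.} Your uniqueness step is the frozen-generator argument that the paper only cites from \cite{futeBismut}. Your readings of the statement are the right ones: growth $C(1+|x|^m)$ in \eqref{stime-teoKolmo}, $Z=\nabla v\,G$ without the factor $\nabla_xX$, and \eqref{Bismut-teoKolmo} for $B=0$ (otherwise in the Girsanov form \eqref{Bismut-lip}). Both you and the paper tacitly use Hypothesis \ref{hyp:operatorG}. For non-smooth data, take the formula at $t=s$ from \eqref{Bismut-lip}, not from \eqref{bsde-int-derMall}, which was derived only for smooth coefficients.

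\textbf{The step that needs tightening.} The frozen generator $g(r,x)=\psi(r,x,\bar v(r,x),\nabla\bar v(r,x)G(r))$ is not covered by Hypothesis \ref{ip-psiphi-li}. So Theorem \ref{teoBismutLip} and Proposition \ref{prop-identif-Z} cannot be quoted for it as they stand, for three reasons:
\begin{enumerate}
\item Definition \ref{defsolmildkolmo} gives no bound on $\nabla\bar vJ$.
\item It gives no continuity of $x\mapsto\nabla\bar v(r,x)J$, which the inf-sup convolution approximation needs.
\item Even for $v$ itself, the only available bound \eqref{stima_Gder_Y_lip} blows up as $r\to T$, so $g$ is never polynomially bounded uniformly in $r$.
\end{enumerate}
The same missing bound is what you need for $M$ to be square integrable, which you assert without justification.

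The standard remedy is to prove uniqueness in the class of mild solutions with $\nabla\bar v(r,\cdot)J$ continuous and $|\nabla\bar v(r,x)J|\le C(T-r)^{-1/2}(1+|x|^k)$. Then $|g(r,x)|\le C(T-r)^{-1/2}(1+|x|^{k'})$ for some $k'$, and $M$ is square integrable. The proofs of Theorem \ref{teoBismutLip} and Proposition \ref{prop-identif-Z} then go through for this generator, because the singularity enters only through $\int_s^T(T-r)^{-1/2}(r-s)^{-1/2}\,dr<\infty$ and through dominated convergence in the BSDE stability estimate. This yields $\bar Z_r=\nabla\bar v(r,X_r)G(r)$ and closes your argument.
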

\dim
Let $\phi_n$ and $\psi_n$ be the inf-sup convolution
of $\phi$ and $\psi$ respectively, as introduced in Lemma \ref{lem:approx_psi_phi}, let $(Y^{n,t,x},Z^{n,t,x})$ be the solution
of the BSDE in (\ref{fbsde}) with final datum $\phi_n$ and generator $\psi_n$,  and let $v^n$ be the mild solution of a Kolmogorov equation with final datum $\phi_n$ instead of $\phi$ and semilinear term $\psi_n$ instead of $\psi$, namely
 \begin{equation}
v^n(s,x)=P_{s,T}\left[  \phi_n\right]  \left(  x\right)  +\int_{s}^{T}%
P_{s,r}\left[  \psi_n(r,\cdot, v^n(r,\cdot), \nabla v^n\left(  r,\cdot\right)G(r)) \right]  \left(  x\right)  dr.
\text{\ \ }s\in\left[  0,T\right]  ,\text{ }x\in H. \label{kolmo-mild-n}%
\end{equation}
Since $\phi_n$ and $\psi_n$ are differentiable, by Theorem \ref{teo_fey_kac} we already know that
$$
v^n(s,x)=Y^{n,s,x}_s, \qquad\nabla_x v^n(s,x) G(s)=Z^{n,s,x}_t,
$$
moreover
\[
 \lim_{n\rightarrow\infty} Y^{n,s,x}_s=Y^{s,x}_s,
\]
where $(Y^{t,x},Z^{t,x})$ is a solution to the backward equation in (\ref{fbsde}). By Theorem
\ref{teoBismutLip} we know that $Y$ satisfies (\ref{Bismut-teoKolmo})
and by \ref{prop-identif-Z} we get the identification \eqref{identif-Z} of $Z$ with $\nabla v G$. So it turns out that $v(s, x) = Y_s^{s,x}$  for $(s, x) \in [0, T ] \times H.$
 Moreover, we have, $\P$-a.s.,
\[
Y_s^{t,x} = v(s, X_s^{t,x} ),\quad       Z_s^{t,x} =\nabla_x v(s, X_s^{t,x} )\nabla_x  X_s^{t,x}G(s).
\]
and passing to the limit in \eqref{kolmo-mild-n} we get 
\begin{equation}
v(s,x)=P_{s,T}\left[  \phi\right]  \left(  x\right)  +\int_{t}^{T}%
P_{s,r}\left[  \psi(r,\cdot, v^n(r,\cdot), \nabla v^n\left(  r,\cdot\right)G(r)) \right]  \left(  x\right)  dr.
\text{\ \ }s\in\left[  0,T\right]  ,\text{ }x\in H. \label{kolmo-mild}%
\end{equation}
can be obtained as in proposition \ref{prop-identif-Z}, and the proof is concluded. Uniqueness of the solution follows by proposition \ref{prop-identif-Z} and the link between semilinear Kolmogorov equations and BSDEs, see also \cite{futeBismut}, proof of Theorem 4.2 for more details. \qed

\section{The stochastic wave equation}
\label{sec:exa_wave}
In this section we consider a semilinear stochastic wave equation of the following form
%non-linear stochastic wave equation:
\begin{equation} 
\left\{
\begin{array}
[c]{l}%
\frac{\partial^{2} y}{\partial t^{2}}\left( t \right)  =-\Lambda y\left( t\right)  +C\left(  t,y(t)\right) +\sigma(t)\dot{W}\left( t\right), \;\;\; 
\\[2mm]
%y\left(  \tau,0\right)  =y\left(  \tau,1\right)  =0,\\
y\left(  0\right)  =x_{0} \in U, \;\;\; \;\;
\frac{\partial y}{\partial t}\left(  0\right)  =x_{1} \in V',\;\;\; \tau \in (0,T].
\end{array}
\right.  \label{wave}
\end{equation}
where
$\Lambda : \mathcal D (\Lambda) \subset U \to U $ is  a positive self-adjoint operator on a real separable Hilbert space $U$ such that there exists $\Lambda^{-1}: U \to U$ of trace class  and 
$\left\{ W(t)= W_{t},t\geq0\right\}  $ is a cylindrical Wiener process  on $U$, the drift $C:[0,T] \times U \to U$  is a bounded measurable function, Lipschitz continuous with respect to the second variable, uniformly in $t \in [0,T]$.

An example of equation \eqref{wave} is given by the stochastic wave equation, 
\begin{equation}
 \left\{
 \begin{array}
 [c]{l}%
 \frac{\partial^{2}}{\partial t ^{2}}y\left(  t,\xi\right)  =\frac
 {\partial^{2}}{\partial\xi^{2}}y\left(  t,\xi\right)  + \bar C\left(  t
 ,\xi, y(t, \xi)
% \frac{\partial y\left(  \tau,\xi\right)}{\partial \tau }
\right) +\sigma(t)\dot{W}\left(  t,\xi\right), \;\;\; \xi \in (0,1),\\
 y\left( t,0\right)  =y\left( t,1\right)  =0,\\
 y\left(  0,\xi\right)  =x_{0}\left(  \xi\right)  , \;\;\; 
 \frac{\partial y}{\partial t}\left(  0,\xi\right)  =x_{1}\left(  \xi\right),\;\;\; t \in (0,T],\;\;\; \xi \in [0,1].
 \end{array}
 \right.  \label{wave-concreta}
 \end{equation}
 Here    $b : \left[  0,T\right]  \times\left[  0,1\right]
\times\mathbb{R} \to \R $  is measurable and,  for  $t
\in\left[  0,T\right]  ,$ a.e. $\xi\in\left[  0,1\right]  ,$ the map $b\left(
t,\xi,\cdot\right)  :\mathbb{\mathbb{R}}  \rightarrow\mathbb{R}$ is continuous. Moreover, 
there exists $c_{1} \in L^{\infty}([0,1])$,
 such that, for  $t\in\left[0,T\right]  $ and a.e. $\xi\in\left[  0,1\right],$  
\[
\left| \bar C\left(  t,\xi,x\right) -\bar C\left( t,\xi,y\right)  \right|
\leq c_{1}\left(  \xi\right)  \left|  x-y\right|
%+\vert z-w\vert^\beta
\]
$ x,\,y \in \R$. Further, we require that 
$\left| \bar C\left(  t,\xi,x\right)  \right|  \leq c_{2}\left(  \xi\right)
,$  for $t \in [0,T]$, $x \in \R$ and a.e. $\xi \in [0,1]$, with $c_{2} \in L^2 (\left[  0,1\right] ) $. A similar model, with the drift $\bar C$ H\"older continuous, is discussed in \cite{MaPr24}. 
\newline We assume that the diffusion $\sigma $ in \eqref{wave} and \eqref{wave-concreta} is bounded and invertible.
\begin{hypothesis} \label{ip-sigma-wave}
We assume that $[0,T]\times [0,1]\ni (t,\xi)\mapsto\sigma(t,\xi)$ is continuous, bounded from above and from below by a positive constant.
To lighten the notation, we set 
$\|\sigma\|_\infty := \|\sigma\|_{L^\infty([0,T]\times [0,1])}$.
\end{hypothesis}
 
We set $V=D(\Lambda^{1/2})$ and $V':=\overline{U}^{|\cdot|_{V'}}$ where $|\cdot |_{V'}:=|\Lambda^{-1/2}\cdot|_U$. $V'$ turns out to be a separable Hilbert space, and we still denote by $\Lambda$ the extension of $\Lambda$ on $V'$. In particular, in $V'$ we have $D(\Lambda^{1/2})=U$ and $D(\Lambda)=V$. Further, assume that the embedding $U\subseteq V'$ is Hilbert-Schmidt.

We set $H:=U\times V'$ endowed with inner product
\begin{align*}
\langle\begin{pmatrix}
u_1 \\ v_1
\end{pmatrix}, 
\begin{pmatrix}
u_2 \\ v_2
\end{pmatrix}
\rangle_H:=\langle u_1,u_2\rangle_U+\langle v_1,v_2\rangle_{V'}
\end{align*}
for every $u_1,u_2\in U$ and $v_1,v_2\in V'$. 

%To clarify this we introduce 
Equation \eqref{wave} can be reformulated as an abstract evolution equation in the product space $H:=  U \times  V' $
where the state variable is given by the pair position-velocity, while it is not well posed 
%We recall why we consider the space $H$ instead of
in the more usual space $K = V \times U$. 
%It is well known, see e.g. \cite{DP1}, that the linear stochastic wave equation with additive noise, that is, equation \eqref{wave} with $C\equiv 0$, is well defined in $H$. 
%On the other hand, it is not well-posed in the more usual space $\calh = V \times U$ indeed its solutions do not evolve in $\calh=V \times U$ even if $x_0 \in V$ and $x_1 \in U$. 
The space $H$ is endowed with the inner product $\langle x,y \rangle_H$ 
$= \langle x_1 , y_1  \rangle_U $ + $\langle x_2 , y_2  \rangle_{V'} $ and norm $|x|_H = 
(\langle x,x \rangle_H)^{1/2}$, $x,y \in H$. In the sequel we will also denote $\langle \cdot, \cdot\rangle_H$ and $|\cdot|_H$ by $\langle \cdot, \cdot\rangle $ and $|\cdot|$.
In $H$ one considers the unbounded wave operator $A$  which generates a unitary group 
$e^{tA},\,t\in\R$:
%(cf. \eqref{ci11})
\begin{equation}\label{A}
\mathcal{D}\left(  A\right)  = V 
\times U,
\text{ \ \ \ \ }%
A\left(
\begin{array}
[c]{c}%
y\\
z
\end{array}
\right)  =\left(
\begin{array}
[c]{cc}%
0 & \rm{Id}\\
-\Lambda & 0
\end{array}
\right)  \left(
\begin{array}
[c]{c}%
y\\
z
\end{array}
\right)  ,\text{ \ for every }\left(
\begin{array}
[c]{c}%
y\\
z
\end{array}
\right)  \in\mathcal{D}\left(  A\right),
\end{equation}.
%The operator $A$ is the generator of the contractive group
\[
e^{tA}
\left(
\begin{array}[c]{c}%
y\\z
\end{array}
\right) 
=\begin{pmatrix}
\cos(\Lambda^{1/2}t) & \displaystyle\frac{1}{\Lambda^{1/2}}\sin (\Lambda^{1/2}t) \\ \\
-{\Lambda^{1/2}}\sin (\Lambda^{1/2}t) & \cos(\Lambda^{1/2}t)
\end{pmatrix}\left(
\begin{array}[c]{c}%
y\\z
\end{array}
\right)  ,\text{ \ \ }t\in\mathbb{R},\;\; \left(
\begin{array}
[c]{c}%
y\\
z
\end{array}
\right)  \in H.
\]

We consider the operators $ J\in L(U;H)$ and $\widetilde G(t)$, $t\in[0,T]$, defined as, $\forall\,u\in \,U$,
\begin{align*}
Ju:=\begin{pmatrix}
0 \\  u
\end{pmatrix}, \qquad \widetilde G(t)u=\sigma(t)u.
\end{align*}
By Hypothesis \ref{ip-sigma-wave} we get that $\forall t \in [0,T]$, $\widetilde G(t)$
is invertible with bounded inverse denoted by $\widetilde G(t)^{-1}$.
Equation \eqref{wave} can be written as an abstract evolution equation in $H$ as 
%\textcolor{red}{manca il drift?}
\begin{equation}
\label{wa1}
dX_{t}^{0,x}  = A X_{t}^{0,x} dt+B(t,X_{t}^{0,x}) dt+G(t)dW_t
,\; t\in\left[  0,T\right], \; 
X_{0}^{0,x} =x \in H,
\end{equation} which admits a unique solution in $H$ since thanks to the assumption on $\bar C$ and to \ref{ip-sigma-wave} Hypothesis \ref{hyp:base} holds true with $\gamma_B=\gamma_G=0.$
%it turns out that  $G\in L_2(U,H)$and 
%\begin{equation}\label{sigma-hs}
% \sup_{t \in [0,T]} \| e^{tA} G \|_{L_2(U, H)} < \infty, \;\;\; T>0,
%\end{equation}
%So Hypothesis \ref{hyp:base} (with $\gamma=0$) holds true, so by \cite{FuhTes2002}, see also \cite[Theorem 6.10]{DP1}.
%
%for which we aim at studying well-posedness, our SPDE %(\ref{wave111})
%(\ref{waveequation-holder}) 
%can be rewritten in an abstract way as an
%equation in $H$ of the following form:
%\begin{equation}
%\left\{
%\begin{array}
%[c]{l}%
%dX_\tau  =AX_\tau  d\tau+GB(\tau,X_\tau)d\tau+GdW_\tau  ,%\text{ \ \ \ }\tau\in\left[
%t,T\right] \\
%X_t  =x,
%\end{array}
%\right.  \label{waveeqabstract-holder}%
%\end{equation}
%where 
%\[
% X_\tau :=
% \left(
% \begin{array}{l}

% \textcolor{blue}{\begin{remark}\label{rm:wave-Holder}
% Notice that in \cite{MaPr} and \cite{MaPr24} it is proved that the wave equation \eqref{wave} $($\eqref{wa1} if we consider its abstract formulation$)$ is well posed even if the drift is assumed to be only $\alpha$-H\"older continuous of order $\alpha >2/3$. Moreover the results contained in Theorem \ref{teoBismutLip}, in particular Bismut formula in \eqref{Bismut-lip}, remain true also under H\"older continuity assumption on the drift. Indeed, the way in which we state the Bismut formula only requires existence and uniqueness of the wave equation and boundedness of the drift, but no further regularity on the drift.
% \end{remark}
% }

Now we recall that for the class of stochastic wave equations \eqref{wave}, condition \eqref{link_gat_der_mall_der}, holds true for any $h\in K$, while it is not true
%. Indeed in this case we cannot prove that condition \eqref{link_gat_der_mall_der} holds 
for any $h\in H$, indeed this would imply the strong Feller property for the wave transition semigroup, which is not true, see e. g. \cite{DP2}.

More precisely following \cite{MaPr24}, we have
\begin{proposition}\label{prop:link_gat_der_mall_der}
Let $A$ be defined as in \eqref{A}, let $\Lambda : \mathcal{D} (\Lambda) \subseteq U \to U $ be a positive self-adjoint operator on a separable Hilbert space $U$. Then for any $h\in K $ there exists $\widetilde u_t$ such that 
\begin{align}
\label{link_gat_der_mall_der_onde}
\int_s^te^{(t-r)A} G(r)\widetilde u_t(r)dr= e^{(t-s)A}h, \quad \mathbb P\textup{-a.s.},    
\end{align}
and, for some positive constant $C$ which does not depend on $t\in(0,T]$.
\begin{align*}
\|\widetilde u_t\|_{L^2((0,t);U)}
\leq & C \frac{|h|_{K}}{t^{3/2}};
\end{align*}
moreover if $h=Jb=\begin{pmatrix}
0 \\ b
\end{pmatrix} $  with $b\in U$, then 
\begin{align*}
    \|\widetilde u_t\|_{L^2((0,t);U)}^2
    \leq C \frac{|b|^2_U}{t},
\end{align*}
\end{proposition}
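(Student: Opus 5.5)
We look for $\widetilde u_t$ as a minimum-energy control for the wave system, computed mode by mode. We read $K=V\times U$, normed by $|h|_K^2=|h_0|_V^2+|h_1|_U^2$, and read $t$ in the bounds as the length of the time interval (the case $s=0$); the general case is a time shift.

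\textbf{Step 1: reduction.} Since $(e^{tA})$ is a group, applying $e^{-(t-s)A}$ to \eqref{link_gat_der_mall_der_onde} gives the equivalent condition
\begin{align*}
\int_s^t e^{(s-r)A}J\sigma(r)\widetilde u_t(r)\,dr=h .
\end{align*}
Set $w(r):=\sigma(r)\widetilde u_t(r)$. By Hypothesis \ref{ip-sigma-wave}, $\widetilde u_t=\sigma^{-1}w$ and $\|\widetilde u_t\|_{L^2}\le \|\sigma^{-1}\|_\infty\|w\|_{L^2}$. Shifting time, with $\tau=t-s$, it suffices to find $w\in L^2(0,\tau;U)$ such that
\begin{align*}
\int_0^\tau e^{-rA}Jw(r)\,dr=h,
\end{align*}
with $\|w\|^2_{L^2}\le C|h|_K^2\tau^{-3}$ in general and $\|w\|^2_{L^2}\le C|b|_U^2\tau^{-1}$ when $h=Jb$. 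This is exact controllability of the wave equation with distributed velocity control.

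\textbf{Step 2: modal decomposition.} Since $\Lambda^{-1}$ is trace class, $\Lambda$ has an orthonormal eigenbasis $(e_k)$ in $U$ with eigenvalues $\lambda_k\ge\lambda_1>0$. Write $\mu_k=\sqrt{\lambda_k}$, $h=(\sum_k h_{0,k}e_k,\sum_k h_{1,k}e_k)$ and $w(r)=\sum_k w_k(r)e_k$. Using the explicit formula for $e^{tA}$, the constraint decouples into independent scalar problems
\begin{align*}
\int_0^\tau \begin{pmatrix} -\mu_k^{-1}\sin(\mu_k r)\\ \cos(\mu_k r)\end{pmatrix} w_k(r)\,dr=\begin{pmatrix} h_{0,k}\\ h_{1,k}\end{pmatrix}.
\end{align*}
For each $k$ we take the minimum-norm solution $w_k(r)=\langle \eta_k(r),Q_k^{-1}(h_{0,k},h_{1,k})\rangle_{\R^2}$. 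Here $\eta_k(r)=(-\mu_k^{-1}\sin(\mu_k r),\cos(\mu_k r))$ and $Q_k=\int_0^\tau\eta_k\eta_k^{T}\,dr$ is the $2\times2$ Gramian. Then $\|w_k\|^2_{L^2}=\langle Q_k^{-1}(h_{0,k},h_{1,k}),(h_{0,k},h_{1,k})\rangle$. Summing over $k$, it suffices to prove uniform-in-$k$ bounds on the rescaled inverse Gramian. Rescale the first coordinate by $\mu_k$, i.e. set $\widehat h_{0,k}=\mu_k h_{0,k}$, which is the natural $V$-coordinate. The target bounds are
\begin{align*}
\langle Q_k^{-1}h,h\rangle\le C\tau^{-3}\big(\mu_k^2 h_{0,k}^2+h_{1,k}^2\big),\qquad \langle Q_k^{-1}(0,b_k),(0,b_k)\rangle\le C\tau^{-1}b_k^2 .
\end{align*}

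\textbf{Step 3: Gramian estimates (main obstacle).} After rescaling, the entries of $Q_k$ are explicit: $\int_0^\tau\sin^2$, $\int_0^\tau\sin\cos$ and $\int_0^\tau\cos^2$ of $\mu_k r$. We split according to $\theta=\mu_k\tau$.
\begin{itemize}
\item For $\theta\ge\theta_0$ (high frequencies), the rescaled Gramian is $\tfrac\tau2 I+O(\mu_k^{-1})$, so it is bounded below by $c\tau$. Its inverse is then $\le C/\tau$, which is better than needed in both bounds.
\item For $\theta\le\theta_0$ (low frequencies, nondegenerate only because $\lambda_1>0$ and $\tau\le T$), Taylor expansion shows the unrescaled Gramian behaves like the double-integrator Gramian $\begin{pmatrix}\tau^3/3&-\tau^2/2\\-\tau^2/2&\tau\end{pmatrix}$, whose determinant is $\tau^4/12$. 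For $h$ general, this gives $\langle Q_k^{-1}h,h\rangle\lesssim \tau^{-3}h_{0,k}^2+\tau^{-1}h_{1,k}^2$. Since $h_{0,k}^2\le\lambda_1^{-1}\mu_k^2h_{0,k}^2$ and $\tau^{-1}\le T^2\tau^{-3}$, this yields the $\tau^{-3}$ bound. For $h=(0,b_k)$, the $(2,2)$ entry of $Q_k^{-1}$ equals $(\tau^3/3)/(\tau^4/12)=4/\tau$, which gives the $\tau^{-1}$ bound.
\end{itemize}
The delicate part is making the determinant lower bound uniform across the transition region $\theta\approx\theta_0$. There I would compute $\det Q_k$ exactly, namely
\begin{align*}
\det Q_k=\mu_k^{-2}\Big(\int_0^\tau\sin^2\int_0^\tau\cos^2-\Big(\int_0^\tau\sin\cos\Big)^2\Big),
\end{align*}
and use that the bracket, a Cauchy--Schwarz defect, is a strictly positive continuous function of $\theta$ (times $\tau^2$). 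Positivity holds because $\sin$ and $\cos$ are linearly independent on any interval. Together with the two asymptotic regimes, compactness in $\theta$ then gives uniformity.

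\textbf{Step 4: conclusion.} Summing the modal bounds gives $w\in L^2(0,\tau;U)$ with the claimed norms. The series defining $w$ converges in $L^2$, and the constraint holds mode by mode, hence in $H$. Finally $\widetilde u_t=\sigma^{-1}w$ is deterministic, so \eqref{link_gat_der_mall_der_onde} holds $\mathbb P$-a.s. trivially.
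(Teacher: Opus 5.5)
Your proposal is correct, but it takes a genuinely different route from the paper.

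\textbf{Your route.} You solve the control problem optimally. You use the group property to pull $e^{-(t-s)A}$ across, diagonalize in the eigenbasis of $\Lambda$, take the minimum-norm control mode by mode, and bound the $2\times 2$ inverse Gramians uniformly in $k$.

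\textbf{The paper's route.} The paper writes down an explicit Zabczyk-type control, with no optimization. For $h=(a,b)\in V\times U$ it sets $\widetilde u_t(s)=\widetilde G(s)^{-1}(\psi_1(s)+\psi_2'(s))$, where $(\psi_2(s),\psi_1(s))=\Phi_t(s)e^{sA}h$ and $\Phi_t(s)=s^2(t-s)^2/\int_0^t r^2(t-r)^2\,dr$. Since $A(0,y)=(y,0)$ and $\Phi_t(0)=\Phi_t(t)=0$, one integration by parts turns $\int_0^t e^{(t-s)A}J\psi_2'(s)\,ds$ into $\int_0^t e^{(t-s)A}(\psi_2(s),0)\,ds$. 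The left-hand side then collapses to $\int_0^t\Phi_t(s)e^{(t-s)A}e^{sA}h\,ds=e^{tA}h$. The bounds follow from $|\Phi_t|\lesssim t^{-1}$ and $|\Phi_t'|\lesssim t^{-2}$, and, when $h=Jb$, from $|\Lambda^{-1/2}\sin(\Lambda^{1/2}s)b|_U\le s|b|_U$.

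\textbf{What each buys.} The paper's construction needs only functional calculus and the semigroup property. It uses no eigenbasis, no inversion of $e^{tA}$, and no frequency regimes. It is exactly the mechanism $h=JK_1h+AJK_2h$, with $K=[J|AJ]^{-1}$, that the paper reuses for the damped wave equation in Section~\ref{sec:exa_damped}. There $A$ generates only a semigroup and its eigenvectors are not orthogonal, so your Gramian analysis would not transplant easily. What your route buys is sharpness. Because your control has minimal norm, your bounds measure the intrinsic cost of the constraint. The low-mode entry $(Q_k^{-1})_{11}\approx 12/\tau^3$ shows that the rate $t^{-3/2}$ cannot be improved for general $h\in K$.

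\textbf{Your delicate step.} The step you flag as delicate can be done explicitly. After your rescaling, the Gramian has eigenvalues $\tfrac{\tau}{2}(1\pm|\sin\theta|/\theta)$, with $\theta=\mu_k\tau$. Since $1-|\sin\theta|/\theta\ge c\min\{1,\theta^2\}$ and $\theta\ge\mu_1\tau$, this gives the general $\tau^{-3}$ bound in both regimes and across the transition, with no compactness argument. Likewise, the $(2,2)$ entry of the inverse equals $\tfrac{2}{\tau}(1-\tfrac{\sin 2\theta}{2\theta})/(1-\tfrac{\sin^2\theta}{\theta^2})$. This is a continuous function of $\theta$ times $\tfrac{2}{\tau}$, with limits $4/\tau$ and $2/\tau$, so it is uniformly bounded by $C/\tau$.
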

\dim
We recall the idea used in \cite[Appendix]{MaPr}, which in turn is a generalization of \cite{Za} where the finite dimensional case is studied, here we also need to notice that requiring $h\in K$ is equivalent to ask that $h\in {D(A)=V\times U}$.
\newline We consider $h=\begin{pmatrix}
a \\ b
\end{pmatrix}\in V\times U$ and we set
\begin{align}\label{wave_def_contr}
\widetilde u_t(s)
=& \widetilde G(s)^{-1}\psi_1(s)
+\widetilde G(s)^{-1}\psi_2'(s) \ \in U
\end{align}
for every $s\in [0,t]$, where 
\begin{align*}
\psi_1(s)= & \Phi_t(s)
[-\Lambda^{1/2}\sin(\Lambda^{1/2}s)a+ \cos(\Lambda^{1/2}s)b], \\
\psi_2(s)= &  \Phi_t(s)[\cos(\Lambda^{1/2}s)a +\displaystyle \frac{1}{\Lambda^{1/2}}\sin(\Lambda^{1/2}s)b],
\end{align*}
and
\begin{align*}
\Phi_t(s)=\frac {s^2(t-s)^2}{\displaystyle\int_0^tr^2(t-r)^2dr} \qquad s\in[0,t].  
\end{align*}
We get that $\widetilde u_t$ is well defined in $U$ since by Hypothesis \ref{hyp:operatorG} $\widetilde G^{-1}\in L(U,U)$ and both $\psi_1(s),\psi_2'(s)\in U$.
%\newline Hence, $\widetilde u\in L^2_{\mathcal P}(\Omega\times [0,T];U)$ since $X\in L^2_{\mathcal P}(\Omega\times[0,T],H)$ and $\widetilde G^{-1}$ is Borel measurable. 
Further, we have $\Phi(0)=\Phi(t)=\Phi'(0)=\Phi'(t)=0$, $\|\Phi\|_{L^1(0,t)}=1$, $|\Phi(s)|\leq Cs^{-1}$ and $|\Phi(s)|\leq Cs^{-2}$ for every $s\in[0,t]$, for some constant $C>0$ independent of $t$. Then, we get
\begin{align*}
& \int_0^te^{(t-s)A}G(s)\widetilde u_t(s)ds  \\   
= & \int_0^te^{(t-s)A}\begin{pmatrix}
0 \\
\psi_1(s)
\end{pmatrix}ds
+\int_0^te^{(t-s)A}\begin{pmatrix}
0 \\
\psi_2'(s)
\end{pmatrix}ds \\
= & \int_0^te^{(t-s)A}\begin{pmatrix}
0 \\
\psi_1(s)
\end{pmatrix}ds
+\int_0^te^{(t-s)A}A\begin{pmatrix}
0 \\
\psi_2(s)
\end{pmatrix}ds \\
= & \int_0^te^{(t-s)A}\begin{pmatrix}
\psi_2(s) \\
\psi_1(s)
\end{pmatrix}ds 
= \int_0^te^{(t-s)A}e^{sA}h\Phi(s)ds \\
= & e^{tA}h,
\end{align*}
which gives \eqref{link_gat_der_mall_der_onde}. Here, we have used the fact that $A\begin{pmatrix}
0 \\ b
\end{pmatrix}=\begin{pmatrix}
b \\ 0
\end{pmatrix}$ and that $\begin{pmatrix}
\psi_2(s) \\ \psi_1(s)
\end{pmatrix}=e^{sA} h$ for every $s\in[0,+\infty)$. It remains to compute the $L^2$-norm of $\widetilde u_t$ in $\Omega\times (0,t)$. We have
\begin{align*}
\|\widetilde u_t\|_{L^2((0,t);U)}^2
\leq & 2\|\widetilde G^{-1}\|_{\infty}
\left(\int_0^t|\psi_1(s)|^2_{U}ds+\int_0^t|\psi_2'(s)|_{U}^2ds\right) \\
\leq & 2\widetilde C\|\widetilde G^{-1}\|_\infty\left(t^{-1}(|\Lambda^{1/2}a|_{U}^2+|b|_{U}^2)+t^{-3}(|\Lambda^{1/2}a|_{U}^2+|b|_{U}^2)\right) 
\leq C\frac{|h|_{V\times U}^2}{t^3},
\end{align*}
where $\widetilde C$ and $\widetilde M$ are positive constants which do not depend on $t\in(0,T]$, and we have used the fact that $a\in V=\Lambda^{-1/2}(U)$, that $|\Lambda^{1/2}a|_{U}=|a|_V$ and that $|h|_{V\times U}^2=|a|_V^2+|b|_{U}^2$.
\newline Finally, if we take $h=\begin{pmatrix}
0 \\ b
\end{pmatrix} \in H$ with $b\in U$, then arguing as above we get
\begin{align*}
    \|\widetilde u_t\|_{L^2((0,t);U)}^2
    \leq C \frac{|b|^2_U}{t},
\end{align*}
for some positive constant $C$ which does not depend on $t\in(0,T]$.
\qed

%%%%%%%%%%%%%%%%%%%%%%%%%

\section{The stochastic damped wave equation}
\label{sec:exa_damped}
\subsection{The general case: abstract formulation}
\label{sub:damped_general}
We consider the stochastic equation with damping given by
\begin{align*}
\left\{
\begin{array}{ll}
{\partial^2_t y}(t)=-\Lambda y(t)-\rho\Lambda^\alpha({\partial_t y}(t))+C(t,y(t),\partial_ty(t))+\sigma\left(t\right)\dot W(t), & t\in(0,T], \\[1mm]
y(0)=y_0\in D(\Lambda^{\frac12}), \quad 
\displaystyle\frac{\partial y}{\partial t}(0)=y_1\in V, &
\end{array}
\right.
\end{align*}
where $V$ is a separable Hilbert space, $\Lambda:D(\Lambda)\subset V\to V$ is a self-adjoint operator which admits an orthogonal basis of $V$ of eigenvectors $\{e_n:n\in\N\}$, with corresponding simple eigenvalues $(\mu_n)_{n\in\N}$ which blow up as $n$ tends to infinity, $C:[0,T]\times V\times V\to V$ is a bounded measurable function, Lipschitz continuous with respect to the second and the third variable, uniformly in $t\in[0,T]$, and $W$ is a $V$-cylindrical Wiener process. In abstract formulation, the above equation reads as
\begin{align*}
dX(t)=AX(t)dt+B(t,X(t))dt+G(t)dW(t), \qquad t\in[0,T], \qquad X(0)=\begin{pmatrix}
\Lambda^{\frac12}y_0 \\ y_1    
\end{pmatrix}\in H,     
\end{align*}
where $U=V$, $H=U\times U$, $X=\begin{pmatrix}
X_1 \\ X_2    
\end{pmatrix}$, $X_1=\Lambda^\frac12 y$, $X_2=\frac{\partial y}{\partial t}$ and
\begin{align*}
& A:D(A)\subseteq H\to H, \quad A=\begin{pmatrix}
0 & \Lambda^{1/2} \\[1mm]
-\Lambda^{1/2} & -\rho \Lambda^{\alpha}
\end{pmatrix},
\qquad J:U\to H, \quad Ju=\begin{pmatrix}
0 \\ u    
\end{pmatrix}, \quad u\in U, \\[1mm] 
& \bar B(t,h)=C(t,\Lambda^{-\frac12}h_1,h_2), \qquad  \ h=\begin{pmatrix}
h_1 \\ h_2    
\end{pmatrix}\in H, \ t\in[0,T], \\[1mm]
& \widetilde G:[0,T]\to \mathcal L(U), \qquad \widetilde G(t)=\sigma(t), \quad  t\in[0,T], \\[1mm]
&  G:[0,T]\to\mathcal L(U;H), \quad  G(t)=J \widetilde G(t)
= \begin{pmatrix}
0 \\ \widetilde G(t)  
\end{pmatrix}, \quad t\in[0,T].
\end{align*}

We take advantage of the spectral decomposition of $A$ described in ..., which we briefly recall here. Assume that $\mu_n$ is a simple eigenvalue of $\Lambda$ for every $n\in\N$ and that $4\mu_n^{1-2\alpha}\neq \rho^2$ for every $n\in\N$. It follows that $A$ has simple eigenvalues $\lambda_n^+$, $\lambda_n^-$, defined as
\begin{align*}
\lambda_n^{\pm}:=\frac{-\rho\mu_n^\alpha\pm\sqrt{\rho^2\mu_n^{2\alpha}-4\mu_n}}{4}, n\in\N,
\end{align*}
with corresponding normalized eigenvectors
\begin{align*}
\Phi_n^+=\begin{pmatrix}
\mu_n^\frac12 e_n \\ \lambda_n^+ e_n    
\end{pmatrix}, \qquad \Phi_n^-=\chi_n\begin{pmatrix}
\mu_n^\frac12 e_n \\ \lambda_n^- e_n    
\end{pmatrix}, \qquad n\in\N,   
\end{align*}
We decompose $H$ as $H^++H^-$ (non orthogonal, direct sum), defined as
\begin{align*}
H^+=\overline{{\rm span}\{\Phi_n^+:n\in\N\}}, \qquad H^-=\overline{{\rm span}\{\Phi_n^-:n\in\N\}}.  
\end{align*}
It follows that every $h\in H$ can be written in a unique way as $h=h^+h^-$, $h^+\in H^+$, $h^-\in H^-$, and
\begin{align*}
e^{tA} & =\sum_{n\in\N}\left(e^{t\lambda_n^+}\langle h^+,\Phi_n^+\rangle_H\Phi_n^++e^{t\lambda_n^-}\langle h^-,\Phi_n^-\rangle_H\Phi_n^-\right), \qquad h\in H, \ t\geq0, \\
Ah & =\sum_{n\in\N}\left(\lambda_n^+\langle h^+,\Phi_n^+\rangle_H\Phi_n^++\lambda_n^-\langle h^-,\Phi_n^-\rangle_H\Phi_n^-\right), \qquad h\in D(A),
\end{align*}
where $(e^{tA})_{t\geq0}$ is the semigroup generated by $A$. Further, we assume that $C$ is Lipschitz continuous with respect to $h$, uniformly with respect to $t\in[0,T]$, and that Hypothesis \ref{ip-sigma-wave} holds true.

Let us prove that, under the above assumptions and with some additional condition, Hypothesis \ref{hyp:base} are satisfied.
\begin{proposition}
\label{prop:stima_HS_1}
Assume that there exists a positive constant $\delta$ such that 
\begin{align}
\label{condizione_traccia}
\mu_n\sim  n^{\delta}, \qquad \delta>\frac{1}{\alpha}.
\end{align}
Then, Hypothesis \ref{hyp:base} are fulfilled, i.e.,  $e^{tA}G(\tau)\in\mathcal L_2(U;H)$ for every $t\in(0,T]$ and every $\tau\in[0,T]$, and there exist positive constants $C_G$ and  $\gamma_G\in\left(0,\frac12\right)$ such that \eqref{hyp_base1} is satisfied. 
\end{proposition}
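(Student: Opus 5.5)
We compute the Hilbert--Schmidt norm of $e^{tA}G(\tau)$ directly, using the basis $\{e_n\}$ of $U$ and the spectral decomposition of $A$. Since $G(\tau)e_k = J\sigma(\tau)e_k$ and $\sigma$ is bounded (Hypothesis \ref{ip-sigma-wave}), we have
\begin{align*}
\|e^{tA}G(\tau)\|_{\mathcal L_2(U;H)}\leq \|e^{tA}J\|_{\mathcal L_2(U;H)}\,\|\sigma\|_\infty .
\end{align*}
It therefore suffices to estimate $\|e^{tA}J\|_{\mathcal L_2(U;H)}^2=\sum_{n}|e^{tA}Je_n|_H^2$. If $\sigma(\tau)$ is a multiplication operator that does not commute with the $e_n$, we instead write $\|e^{tA}J\widetilde G(\tau)\|_{\mathcal L_2}\le\|e^{tA}J\|_{\mathcal L_2}\|\widetilde G(\tau)\|_{\mathcal L(U)}$, which gives the same reduction.

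Each vector $Je_n=(0,e_n)^T$ lies in the two-dimensional $A$-invariant subspace ${\rm span}\{(e_n,0),(0,e_n)\}$. On that subspace $A$ acts as the $2\times2$ matrix
\begin{align*}
A_n=\begin{pmatrix}0&\mu_n^{1/2}\\-\mu_n^{1/2}&-\rho\mu_n^\alpha\end{pmatrix}.
\end{align*}
Hence $|e^{tA}Je_n|_H=|e^{tA_n}(0,1)^T|_{\R^2}$, and the problem becomes a family of explicit two-dimensional computations.

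The next step is to bound $|e^{tA_n}(0,1)^T|$. We write $e^{tA_n}(0,1)^T$ through the eigenvalues $\lambda_n^\pm$ of $A_n$: its components are combinations of $e^{t\lambda_n^\pm}$ with coefficients coming from the eigenvectors, whose size depends on whether $\alpha>1/2$ or $\alpha<1/2$. The aim is a bound of the form
\begin{align*}
|e^{tA_n}(0,1)^T|^2\leq C e^{-c t\,\kappa_n},
\end{align*}
where $c>0$ is independent of $n$ and $t\in(0,T]$, and $\kappa_n\sim\mu_n^{\alpha}$ in the overdamped regime $\alpha\ge\frac12$, resp.\ $\kappa_n\sim\mu_n^{1-\alpha}$ if $\alpha<\frac12$. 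For example, in the case $\alpha>\frac12$ we have $\lambda_n^-\sim-\rho\mu_n^\alpha$ and $\lambda_n^+\sim-\mu_n^{1-\alpha}/\rho$. The projection of $(0,1)$ onto the slow eigendirection has size $O(\mu_n^{1/2-\alpha})$, so the slow mode decays more slowly but carries a small weight. This must be balanced carefully.

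An alternative route avoids the eigenvector bookkeeping: an energy estimate for the ODE $z'=A_nz$ with $z(0)=(0,1)$ gives
\begin{align*}
\frac{d}{dt}|z|^2=-2\rho\mu_n^\alpha z_2^2 ,
\end{align*}
so $\int_0^\infty z_2^2\,dt\le (2\rho\mu_n^{\alpha})^{-1}$. I expect the direct spectral bound to be needed, however, to get a pointwise-in-$t$ decay.

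To finish, we sum over $n$ using $\mu_n\sim n^\delta$:
\begin{align*}
\sum_n e^{-ct\mu_n^{\alpha}}\lesssim\int_0^\infty e^{-ct x^{\alpha\delta}}dx\lesssim t^{-1/(\alpha\delta)} .
\end{align*}
Thus $\|e^{tA}J\|_{\mathcal L_2}\le C t^{-\gamma_G}$ with $\gamma_G=\frac1{2\alpha\delta}$, and $\gamma_G<\frac12$ exactly because $\delta>1/\alpha$. This also shows $e^{tA}G(\tau)\in\mathcal L_2(U;H)$ for every $t>0$. The boundedness of $\bar B$ and the Lipschitz estimate \eqref{hyp:base2B} with $\gamma_B=0$ follow from the assumptions on $C$ and $\sup_{t\le T}\|e^{tA}\|<\infty$.

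The main obstacle is the uniform-in-$n$ pointwise decay of $e^{tA_n}(0,1)^T$ with the rate $\mu_n^\alpha$. Near the non-generic case the two eigenvalues almost coincide and the eigenvector coefficients $\chi_n$ can blow up. The assumption $4\mu_n^{1-2\alpha}\ne\rho^2$, together with $\mu_n\to\infty$, keeps the eigenvalues away from coalescence for large $n$, and finitely many $n$ only contribute a bounded term. If only the slow rate $\mu_n^{1-\alpha}$ can be obtained, I would fall back on the weighted slow-mode estimate described above and recheck that the resulting exponent still gives $\gamma_G<\frac12$.
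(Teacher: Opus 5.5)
Your architecture matches the paper's: reduce to $\|e^{tA}J\|_{\mathcal L_2(U;H)}$ by the ideal property, split $H$ into the invariant planes ${\rm span}\{(e_n,0),(0,e_n)\}$, bound $|e^{tA_n}(0,1)^T|$, and sum by comparison with an integral. The gap is in the per-mode bound, where your two regimes are swapped.

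For $\alpha<\frac12$ the eigenvalues $\lambda_n^\pm=\frac12\bigl(-\rho\mu_n^\alpha\pm\sqrt{\rho^2\mu_n^{2\alpha}-4\mu_n}\bigr)$ are eventually complex with real part $-\frac\rho2\mu_n^\alpha$. So the rate is $\mu_n^\alpha$, not $\mu_n^{1-\alpha}$. Your final sum happens to use the right rate in this case.

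For $\alpha>\frac12$, however, the claim $|e^{tA_n}(0,1)^T|^2\le Ce^{-ct\mu_n^\alpha}$ is false. The explicit formula is
\[
e^{tA_n}\begin{pmatrix}0\\1\end{pmatrix}=\frac{1}{\lambda_n^+-\lambda_n^-}\left[e^{t\lambda_n^+}\begin{pmatrix}\mu_n^{1/2}\\ \lambda_n^+\end{pmatrix}-e^{t\lambda_n^-}\begin{pmatrix}\mu_n^{1/2}\\ \lambda_n^-\end{pmatrix}\right],
\]
with $\lambda_n^+-\lambda_n^-\sim\rho\mu_n^\alpha$, $\lambda_n^+\sim-\mu_n^{1-\alpha}/\rho$ and $\lambda_n^-\sim-\rho\mu_n^\alpha$. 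At $t=\mu_n^{\alpha-1}$ the first component is of order $\mu_n^{\frac12-\alpha}$. Hence the left side is $\gtrsim\mu_n^{1-2\alpha}$, while $Ce^{-ct\mu_n^\alpha}=Ce^{-c\mu_n^{2\alpha-1}}$ is exponentially small. This is not a side case: for $\Lambda=-\Delta$ on $(0,\pi)$ one has $\delta=2$, and $\delta>1/\alpha$ means exactly $\alpha>\frac12$.

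The fallback you only announce is the missing step, and it must keep two terms. For large $n$ the formula gives
\[
|e^{tA_n}(0,1)^T|^2\le C\bigl(\mu_n^{1-2\alpha}e^{-ct\mu_n^{1-\alpha}}+e^{-ct\mu_n^{\alpha}}\bigr),
\]
and the finitely many remaining modes are bounded by $1$ by your energy identity.

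\begin{itemize}
\item The slow piece is the weight--rate pair $n^{\delta(\theta-\alpha)}e^{-2\rho n^{\delta\theta}t}$, $\theta=1-\alpha$, used in the paper's proof. Summed over $n$ it is $\lesssim1+t^{-q}$ with $q=\frac{1/\delta+1-2\alpha}{1-\alpha}$ (bounded or logarithmic if $q\le0$), and $q<1$ iff $\delta>1/\alpha$.
\item The fast piece cannot be dropped. The energy identity gives $|e^{tA_n}(0,1)^T|^2\ge1-2\rho\mu_n^\alpha t$, so every mode with $\rho\mu_n^\alpha t\le\frac14$ contributes at least $\frac12$. Hence $\sum_ne^{-ct\mu_n^\alpha}\simeq t^{-1/(\alpha\delta)}$.
\item Since $q-\frac1{\alpha\delta}=\frac{(2\alpha-1)(1/\delta-\alpha)}{\alpha(1-\alpha)}<0$, the fast piece dominates.
\end{itemize}

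So your value $\gamma_G=\frac1{2\alpha\delta}<\frac12$ is in fact correct in every regime. As written, though, the argument is sound only for $\alpha\le\frac12$ (after correcting the rate label), and it rests on a false estimate when $\alpha>\frac12$.
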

\begin{remark}
Since in the case of damped wave equation $(\Lambda=-\Delta$ with Dirichlet homogeneous boundary conditions in $L^2([0,\pi]^d))$ we have $\delta=\frac 2d$, it follows that  \eqref{condizione_traccia} is satisfied if and only if $d=1$ and $\alpha>\frac 1{2}$, see Example \ref{example:damped_gen} below. 
\end{remark}
\begin{proof}
Notice that, from the assumption on $C$, condition on $\bar B$ is verified.

The proof follows from that of \cite[Proposition 5.3]{AddBig24}. Notice that, since $G(\tau)=J\widetilde G(\tau)$ and, from Hypothesis \ref{ip-sigma-wave}, $\sup_{\tau\in[0,T]}\|G(\tau)\|_{\mathcal L(U)}<\infty$, it is enough to show that $e^{tA}J\in \mathcal L_2(U;H)$. Here, it has been proved that there exists a positive constant $c$ such that for every $t\in(0,T]$ we get
\begin{align}
\label{damped_stima_norma_HS_2_1}
\left\|e^{tA}\begin{pmatrix}
0 \\ {\rm Id}    
\end{pmatrix}\right\|^2_{\mathcal L_2(U;H)}\leq c\sum_{n=1}^\infty n^{\delta(\theta-\alpha)}e^{-2\rho n^{\delta \theta}t},
\end{align}
where $\theta=1-\alpha$ if $\alpha\in\left(\frac12,1\right)$ and $\theta=\alpha$ if $\alpha\in\left[0,\frac12\right]$. Let us estimate the right-hand side of \eqref{damped_stima_norma_HS_2_1}. We get
\begin{align*}
\sum_{n=2}^\infty n^{\delta(\theta-\alpha) }e^{-2\rho n^{\delta \theta}t}
\leq \int_1^\infty x^{\delta(\theta-\alpha)}e^{-2\rho x^{\delta\theta}t}dx.
\end{align*}
By applying the change of variables $y=x^{\delta\theta}t$, we get
\begin{align*}
\int_1^\infty x^{\delta(\theta-\alpha)}e^{-2\rho x^{\delta\theta}t}dx
= & \frac{t^{-(\theta-\alpha)/\theta-1/(\delta\theta)}}{\delta\theta} \int _t^\infty y^{1/(\delta\theta)+(\theta-\alpha)/\theta-1}e^{-2\rho y}dy \\
= & \frac{t^{-(\theta-\alpha)/\theta-1/(\delta\theta)}}{\delta\theta} \int _t^T y^{1/(\delta\theta)+(\theta-\alpha)/\theta-1}e^{-2\rho y}dy \\
& + \frac{t^{-(\theta-\alpha)/\theta-1/(\delta\theta)}}{\delta\theta} \int _T^\infty y^{1/(\delta\theta)+(\theta-\alpha)/\theta-1} e^{-2\rho y}dy \\
\leq & c\left(1+t^{-(\theta-\alpha)/\theta-1/(\delta\theta)}\right).
\end{align*}
Let us notice that $-(\theta-\alpha)/\theta-1/(\delta\theta)>-1$ if and only if $\delta>1/\alpha$. It follows that under our assumptions there exists a positive constant $c$ such that
\begin{align}
\label{damped_stima_fin_HS_1_1}
\left\|e^{tA}\begin{pmatrix}
0 \\ {\rm Id}   
\end{pmatrix}\right\|^2_{\mathcal L_2(U;H)}\leq c^2t^{-2 \widetilde \gamma_G}   
\end{align}
for every $t\in(0,T]$, where $\widetilde \gamma_G=\frac12(-(\theta-\alpha)/\theta-1/(\delta\theta))$. From \eqref{damped_stima_fin_HS_1_1} and the properties of $\sigma$, we infer that
\begin{align*}
& \|e^{tA}G(\tau)\|_{\mathcal L_2(U;H)}
\leq c\|\sigma\|_\infty t^{- \widetilde \gamma_G}
\end{align*}
for every $t\in(0,T]$ and every $\tau\in[0,T]$. It follows that \eqref{hyp_base1} is fulfilled with
$C_G=c\|\sigma\|_{\infty}$ and $\gamma_G=\widetilde \gamma_G$, if $\widetilde \gamma_G>0$, and $\gamma_G$ any value in $\left(0,\frac12\right)$ if $\widetilde \gamma_G\leq 0$.
% Finally, to show that $e^{ta}G(\tau,\cdot)\in \mathcal G^1(H;\mathcal L_2(U;H))$ for every $t\in(0,T]$ and every $\tau\in[0,T]$, it is enough to notice that
% \begin{align*}
% \nabla e^{tA}G(\tau,x)y= e^{tA}\begin{pmatrix}
% 0 \\ \Lambda^{-\gamma}\nabla \widetilde G(\tau,x)y    
% \end{pmatrix}
% \end{align*}
% for every $t\in(0,T]$, every $\tau\in[0,T]$ and every $x,y\in H$. Since
% \begin{align*}
% \|\nabla G(\tau,x)y\|_U\leq \|\partial_x\sigma\|_{\infty}+\|\partial_y\sigma\|_{\infty}(\|y_1\|_U+\|y_2\|_U),    
% \end{align*}
% where $y=(y_1,y_2)$, arguing as above we conclude the proof.
\end{proof}

To prove the existence of a family of processes $(\widetilde u_t)_{t\in[0,T]}\subseteq L^2(\Omega\times[0,T];U)$ which satisfies \eqref{link_gat_der_mall_der} with $s=0$, we generalize the approach in \cite{AddMasPri23} and in \cite{AddBig24} for the case $G$ not depending on $t$. We recall that this approach relies on the idea in \cite{Za}, where the finite dimensional case is considered. Let us briefly explain such an idea. The (infinite dimensional) $2\times 2$ matrix 
\begin{align*}
[J|AJ]=\begin{pmatrix}
0 & \Lambda^{\frac12} \\ {\rm Id} & -\rho \Lambda^{\alpha}    
\end{pmatrix}
\end{align*}
is, at least formally, invertible, and its inverse matrix is formally given by
\begin{align*}
K:=[J|AJ]^{-1}
= \Lambda^{-\frac12}\begin{pmatrix}
\rho\Lambda^{\alpha}  &  \Lambda^{\frac12} \\ {\rm Id} & 0   
\end{pmatrix}
= \begin{pmatrix}
\rho\Lambda^{\alpha-\frac12}  &  {\rm Id} \\ \Lambda^{-\frac12} & 0   
\end{pmatrix}.
\end{align*}
We denote by $K_i$ the $i$-th row of $K$, with $i=1,2$, i.e., $K_1\begin{pmatrix}a \\ b\end{pmatrix}=\rho \Lambda^{\alpha-\frac12}a+b$ for all $a\in D(\Lambda^{(\alpha-\frac12)\vee 0})$ and $b\in U$, and $K_2\begin{pmatrix} a \\ b\end{pmatrix}=\Lambda^{-\frac12}a$ for every $a,b\in U$. This implies that, at least formally,
\begin{align}
\label{damped_inv_matr_op}
\begin{pmatrix}
0 \\ K_1h    
\end{pmatrix} 
+A\begin{pmatrix}
0 \\ K_2 h    
\end{pmatrix}
=JK_1h+AJK_2h
= & [J|AJ]\left[\begin{matrix}K_1 \\ K_2\end{matrix}\right]h=h, \qquad h\in H.
\end{align}

Hence, if we fix $h\in H$ and for every $t\in[0,T]$ we set
\begin{align}
\label{damped_def_ctrl}
\widetilde u_t(\tau)=\begin{cases}
\widetilde G(\tau)^
{-1}(K_1\psi_{t}(\tau)+K_2\psi_{t}'(\tau)), & \tau\in(0,t), \\
0, & \tau=0 \ \textrm{ and }\tau\in[t,T],
\end{cases}    
\end{align}
where $\psi_t(\tau)=\phi_t(\tau)e^{\tau A}h$ for every $\tau\in[0,T]$ (here, $\phi_t$ is a suitable non-negative real function which vanishes at $0$ and $t$ and with integral equal to $1$), then we get
\begin{align}
\int_0^te^{(t-\tau)A}G(\tau)\widetilde u_t(\tau)d\tau
= & \int_0^te^{(t-\tau)A}\begin{pmatrix}
0 \\ \widetilde G(\tau)\widetilde G(\tau)^{-1}(K_1\psi_{t}(\tau)+K_2\psi_{t}'(\tau))     
\end{pmatrix}d\tau \notag \\
= & \int_0^te^{(t-\tau)A}\begin{pmatrix}
0 \\ K_1\psi_t(\tau)    
\end{pmatrix} d\tau
+ \int_0^te^{(t-\tau)A}\begin{pmatrix}
0 \\ K_2\psi_t'(\tau)    
\end{pmatrix} d\tau.
\label{damped_eq_ctrl_formale_1}
\end{align}
Integrating by parts the second integral which appears in the last line of \eqref{damped_eq_ctrl_formale_1} and recalling that $\psi_t(0)=\psi_t(t)=0$, it follows that
\begin{align*}
\int_0^t e^{(t-\tau)A}\begin{pmatrix}
0 \\ K_2\psi_t'(\tau)    
\end{pmatrix} d\tau
= \int_0^t Ae^{(t-\tau)A}\begin{pmatrix}
0 \\ K_2\psi_t(\tau)    
\end{pmatrix} d\tau
= \int_0^t e^{(t-\tau)A}A\begin{pmatrix}
0 \\ K_2\psi_t(\tau)    
\end{pmatrix} d\tau.
\end{align*}
Replacing in \eqref{damped_eq_ctrl_formale_1} and taking \eqref{damped_inv_matr_op} into account, we infer that
\begin{align*}
\int_0^t e^{(t-\tau)A}G(\tau)\widetilde u_t(\tau)d\tau   = & \int_0^t e^{(t-\tau)A}\left[\begin{pmatrix}
0 \\ K_1\psi_t(\tau)    
\end{pmatrix}
+A\begin{pmatrix}
0 \\ K_2\psi_t(\tau)    
\end{pmatrix}
\right] d\tau \\
= & \int_0^te^{(t-\tau)A}\psi_t(\tau)d\tau
=  \int_0^t\phi_t(\tau)e^{(t-\tau)A}e^{\tau A}hd\tau \\
= & e^{tA}h.
\end{align*}
We stress that above computations are just formal. Indeed, to apply our reasoning we should prove that $K_1\psi_{t}(\tau)+K_2\psi_{t}'(\tau)\in D(\Lambda^{(\alpha-\frac12)\vee0})$ and
$JK_2\psi_t(\tau)\in D(A)$ for every $\tau\in(0,t)$. A detailed proof of these facts is contained in the proofs of \cite[Theorems 4.6 \& 4.7]{AddBig26} and \cite[Theorems 5.4 \& 5.6]{AddBig24}, together with an estimate of $\|\widetilde u_t\|_{L^2(\Omega\times[0,T];U)}$. The unique difference, here, is the presence of $\widetilde G(\tau)^{-1}$ in the definition of $\widetilde u$ but, due to Hypothesis \ref{ip-sigma-wave}, the estimate of $\|\widetilde u_t\|_{L^2(\Omega\times[0,T];U)}$, in terms of singularity as $t$ approaches $0^+$, does not change. We state the quoted results in the following proposition.

\begin{proposition}
\label{prop:stima_utilde}
Assume that condition \eqref{condizione_traccia} are verified, let $T>0$ be fixed, let $\alpha\in\left(0,1\right)$ and let $\widetilde u_t$ be the function defined in \eqref{damped_def_ctrl}. Hence, there exists a positive constant $C>0$ such that, for every $t\in(0,T]$ and every $a\in U$,
\begin{align}
\label{stima_ctrl_alpha>12}
\|\widetilde u_t\|_{L^2(0,T;U)}    
\leq \frac{C\|Ja\|_H}{t^{\frac12}}.
\end{align}
where $\widetilde u_t$ is defined as in \eqref{damped_def_ctrl} with $h=Ja$.
\end{proposition}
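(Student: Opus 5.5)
The plan is to estimate $\widetilde u_t$ from \eqref{damped_def_ctrl} with $h=Ja=\begin{pmatrix}0\\ a\end{pmatrix}$ directly, working mode by mode in the spectral decomposition of $A$. Since $\sup_\tau\|\widetilde G(\tau)^{-1}\|_{\mathcal L(U)}<\infty$ by Hypothesis \ref{ip-sigma-wave}, it suffices to prove
\[
\int_0^t\big(|K_1\psi_t(\tau)|_U^2+|K_2\psi_t'(\tau)|_U^2\big)\,d\tau\leq \frac{C|a|_U^2}{t}.
\]
I would choose $\phi_t(\tau)=t^{-1}\phi(\tau/t)$, where $\phi\in C^\infty_c(0,1)$, $\phi\geq0$ and $\int\phi=1$. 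Then $\|\phi_t\|_{L^2}^2\sim t^{-1}$ and $\|\phi_t'\|_{L^2}^2\sim t^{-3}$.

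First, I expand $e^{\tau A}Ja$ on each two-dimensional invariant subspace spanned by $e_n$ in both components. Writing $a=\sum a_ne_n$, the solution of the damped oscillator $\ddot y=-\mu_n y-\rho\mu_n^\alpha\dot y$ with data $(0,a_n)$ gives
\[
e^{\tau A}Ja=\sum_n\begin{pmatrix}\mu_n^{1/2}y_n(\tau)\\ \dot y_n(\tau)\end{pmatrix}e_n,\qquad y_n(\tau)=a_n\frac{e^{\lambda_n^+\tau}-e^{\lambda_n^-\tau}}{\lambda_n^+-\lambda_n^-}.
\]
Applying $K_1$ and $K_2$ yields, coefficientwise,
\[
K_1\psi_t=\phi_t\big(\rho\mu_n^{\alpha}y_n+\dot y_n\big),\qquad K_2\psi_t'=\phi_t'\,y_n+\phi_t\,\dot y_n.
\]
This identification also settles the domain issues raised after \eqref{damped_eq_ctrl_formale_1}: the formal identity becomes a genuine computation on each mode. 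Convergence in $U$ then follows from the bounds below together with Parseval.

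The key step is a pair of uniform bounds for $\tau\in(0,t]$, with $t\leq T$:
\[
|y_n(\tau)|\leq C\tau|a_n|,\qquad |\dot y_n(\tau)|+\rho\mu_n^\alpha|y_n(\tau)|\leq C|a_n|.
\]
The bound on $|\dot y_n|$ holds because the energy is nonincreasing. The bound on $|y_n|$ follows from $y_n(0)=0$ together with $|\dot y_n|\leq |a_n|$. The remaining bound, on $\mu_n^\alpha|y_n|$, is the main obstacle and has to be handled separately in the overdamped regime ($\rho^2\mu_n^{2\alpha}>4\mu_n$, which for $\alpha>\frac12$ covers large $n$) and the underdamped regime. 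In the overdamped case I would use $\lambda_n^-\approx-\rho\mu_n^\alpha$, $\lambda_n^+\approx-\mu_n^{1-\alpha}/\rho$ and $\lambda_n^+-\lambda_n^-\sim\rho\mu_n^\alpha$, which gives $\mu_n^\alpha|y_n|\leq C|a_n|$. In the underdamped case, the energy-type estimate $|\mu_n^{1/2}y_n|\leq|a_n|$ together with $\mu_n^\alpha\lesssim\mu_n^{1/2}$ should suffice. This is exactly where the nondegeneracy $4\mu_n^{1-2\alpha}\neq\rho^2$ and the eigenvalue asymptotics from \cite{AddBig24,AddBig26} enter. I would follow those computations and check that only $\widetilde G^{-1}$ has been added.

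Granting these bounds, I conclude as follows:
\[
\int_0^t|K_1\psi_t|^2d\tau\leq C|a|^2\|\phi_t\|_{L^2}^2\leq \frac{C|a|^2}{t},
\]
\[
\int_0^t|K_2\psi_t'|^2d\tau\leq C|a|^2\int_0^t\big(\tau^2\phi_t'(\tau)^2+\phi_t(\tau)^2\big)d\tau\leq C|a|^2\big(t^2t^{-3}+t^{-1}\big).
\]
Summing over $n$ gives \eqref{stima_ctrl_alpha>12}, since $\|Ja\|_H=|a|_U$. The gain relative to a general $h$, where the bound would be of order $t^{-3/2}$, comes precisely from $y_n(0)=0$, which produces the factor $\tau$ compensating $\phi_t'$.
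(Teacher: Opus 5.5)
Your proof is correct, but it takes a different route from the paper. The paper gives no self-contained argument: it defers to the computations in \cite[Theorems 4.6 \& 4.7]{AddBig26} and \cite[Theorems 5.4 \& 5.6]{AddBig24}. There $K_1\psi_t$ and $K_2\psi_t'$ are expanded along the non-orthogonal eigenvectors $\Phi_k^\pm$ of $A$, with coefficients $\langle (Ja)^\pm,\Phi_k^\pm\rangle_H$ and explicit eigenvalues $\lambda_k^\pm$, and estimated directly, case by case in $\alpha$. The paper then only observes that the factor $\widetilde G(\tau)^{-1}$ is harmless by Hypothesis \ref{ip-sigma-wave}.

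You use the same modal quantities; indeed $\rho\mu_n^\alpha y_n+\dot y_n=a_n(\lambda_n^-e^{\lambda_n^+\tau}-\lambda_n^+e^{\lambda_n^-\tau})/(\lambda_n^--\lambda_n^+)$. The difference is that you bound them by energy monotonicity, and this makes the mechanism explicit. The facts $y_n(0)=0$ and $|\dot y_n|\le|a_n|$ give $|y_n(\tau)|\le\tau|a_n|$, which absorbs $\phi_t'$. This is exactly what separates the direction $Ja$ (rate $t^{-1/2}$) from a general $h$ (rate $t^{-3/2}$).

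Your case split for $\mu_n^\alpha|y_n|$ is correct. The asymptotics $\lambda_n^+-\lambda_n^-\sim\rho\mu_n^\alpha$ hold only for large $n$, but the finitely many remaining modes are covered by $\mu_n^\alpha|y_n|\le\mu_n^{\alpha-1/2}|a_n|$. The split can, however, be dropped altogether. Set $w_n=\dot y_n+\rho\mu_n^\alpha y_n$; then $\dot w_n=-\mu_n y_n$, hence $\frac{d}{d\tau}\bigl(w_n^2+\mu_n y_n^2\bigr)=-2\rho\mu_n^{1+\alpha}y_n^2\le0$ and $|w_n(\tau)|\le|a_n|$. Only $w_n$ enters $K_1\psi_t$.

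Seen this way, your route yields a constant depending only on the bump $\phi$ and on $\sup_\tau\|\widetilde G(\tau)^{-1}\|_{\mathcal L(U)}$. Neither \eqref{condizione_traccia} (needed only for Hypothesis \ref{hyp:base}(ii)) nor the non-resonance condition $4\mu_n^{1-2\alpha}\neq\rho^2$ is used, contrary to your remark that the latter enters here.

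The explicit spectral route, on the other hand, tracks the decay rates $e^{\lambda_k^\pm\tau}$. That is what is needed in the weighted variants, with the $\mu_k^\varepsilon$ factors and the $J_1$ directions of rate $t^{-\frac12-\beta}$. There a plain energy bound no longer captures the correct singularity.
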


\begin{proposition}
Estimate \eqref{stima_norma_L2_utilde} is verified.
\end{proposition}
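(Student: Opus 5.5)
The estimate \eqref{stima_norma_L2_utilde} requires, for the damped wave setting and $h=Ja$, a control $\widetilde u_{Ja,s,x,r}$ satisfying \eqref{link_gat_der_mall_der} on $[s,r]$ with $\|\widetilde u_{Ja,s,x,r}\|_{L^2(s,r;U)}\le C\|a\|_U(r-s)^{-1/2}$. The plan is to reduce this to Proposition \ref{prop:stima_utilde} by a time shift, and then to convert $\|Ja\|_H$ into $\|a\|_U$.

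\emph{Step 1: time shift.} Fix $0\le s<r\le T$ and set $t:=r-s$. Let $\widetilde u_t$ be the control of \eqref{damped_def_ctrl}, built with $h=Ja$, $\widetilde G(\cdot+s)$ in place of $\widetilde G(\cdot)$, and cut-off $\phi_t$ vanishing at $0$ and $t$ with unit integral. Define
\[
\widetilde u_{Ja,s,x,r}(\tau):=\widetilde u_t(\tau-s)
\]
for $\tau\in(s,r)$, and zero elsewhere. The change of variables $\tau=\sigma+s$ gives
\[
\int_s^r e^{(r-\tau)A}G(\tau)\widetilde u_{Ja,s,x,r}(\tau)\,d\tau=\int_0^t e^{(t-\sigma)A}J\widetilde G(\sigma+s)\widetilde u_t(\sigma)\,d\sigma .
\]
The factor $\widetilde G(\sigma+s)$ cancels against $\widetilde G(\sigma+s)^{-1}$ in the definition of $\widetilde u_t$. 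The formal computation following \eqref{damped_inv_matr_op} then yields $e^{tA}Ja=e^{(r-s)A}Ja$, which is \eqref{link_gat_der_mall_der}. The control is deterministic and independent of $x$, consistent with $B=0$ after the Girsanov step.

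\emph{Step 2: uniformity of the constant.} Proposition \ref{prop:stima_utilde} gives $\|\widetilde u_t\|_{L^2(0,T;U)}\le C\|Ja\|_H\,t^{-1/2}$. Its constant depends on $\widetilde G$ only through $\sup_{\tau}\|\widetilde G(\tau)^{-1}\|_{\mathcal L(U)}$, which is bounded by Hypothesis \ref{ip-sigma-wave} and is unchanged by the shift. It is also uniform for $t\in(0,T]$, so it does not depend on $s$. Translation invariance of the $L^2$ norm then gives $\|\widetilde u_{Ja,s,x,r}\|_{L^2(s,r;U)}=\|\widetilde u_{r-s}\|_{L^2(0,r-s;U)}$.

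\emph{Step 3: from $\|Ja\|_H$ to $\|a\|_U$.} Since $H=U\times U$ with the product norm, $\|Ja\|_H=\|(0,a)\|_H=\|a\|_U$. Combining the three steps gives \eqref{stima_norma_L2_utilde}.

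\textbf{Main obstacle.} The delicate point is checking that the estimates behind Proposition \ref{prop:stima_utilde}, taken from \cite{AddBig24,AddBig26}, are unaffected by the time-dependent factor $\widetilde G(\tau+s)^{-1}$. The issue is that $K_1\psi_t+K_2\psi_t'$ is estimated in $U$ before $\widetilde G^{-1}$ is applied, so the $\widetilde G^{-1}$ factor enters only as a bounded multiplier; this has to be confirmed rather than assumed. Concretely, the required bound is
\[
\|\widetilde G(\cdot)^{-1}(K_1\psi_t+K_2\psi_t')\|_{L^2}\le \sup_\tau\|\widetilde G(\tau)^{-1}\|_{\mathcal L(U)}\,\|K_1\psi_t+K_2\psi_t'\|_{L^2}.
\]
No derivative of $\widetilde G$ appears, because the derivative in $\psi_t'$ falls on $\psi_t=\phi_t e^{\tau A}h$ and not on $\widetilde G$. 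So the $t^{-1/2}$ singularity proved in the time-independent case transfers verbatim.
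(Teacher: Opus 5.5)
Your proposal is correct and follows the paper's proof: the case $s=0$ is handled directly by Proposition~\ref{prop:stima_utilde} together with $\|Ja\|_H=\|a\|_U$, and the case $s>0$ by time translation. Building the shifted control with $\widetilde G(\cdot+s)$, and noting that the constant depends on $\widetilde G$ only through $\sup_\tau\|\widetilde G(\tau)^{-1}\|_{\mathcal L(U)}$, simply spells out what the paper compresses into ``the above results are invariant under time translation''.
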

\begin{proof}
If $s=0$, then it is enough to consider $\widetilde u_{Ja,0,x,t}=\widetilde u_t$ and recall that $\|Ja\|_H=\|a\|_U$ for every $a\in U$. If $s>0$, then it suffices to notice that the above results are invariant under time translation.     
\end{proof}
Now we provide a concrete example of $\Lambda$ which satisfies our conditions. In particular, we are able to cover the case of stochastic damped wave equation in dimension $1$.
\begin{example}
\label{example:damped_gen}
Let $\Lambda$ be the realization of the Laplace operator in $L^2([0,\pi])$ with homogeneous Dirichlet boundary conditions and let $\alpha\in\left(\frac12,1\right)$. In this case, $\delta=2$ and so condition \eqref{condizione_traccia} is fulfilled. Hence, if Hypothesis \ref{ip-sigma-wave} are satisfied, then Hypothesis \ref{hyp:base} and condition \eqref{stima_norma_L2_utilde} hold true. 
\end{example}

\subsection{A special drift: the abstract formulation}
In this subsection, we consider the stochastic damped wave equation given by
\begin{align*}
\left\{
\begin{array}{ll}
\displaystyle {\partial^2_t y}(t)=-\Lambda y(t)-\rho\Lambda ^\alpha{\partial_t y}(t)+C(t,y(t))+\sigma\left(t\right)\dot W(t), & t\in(0,T], \\[2.5mm]
y(0,\xi)=y_0(\xi)\in D(\Lambda^\frac12), \quad 
\displaystyle\frac{\partial y}{\partial t}(0,\xi)=y_1(\xi)\in V,
\end{array}
\right.
\end{align*}
where $\Lambda:D(\Lambda)\subset V\to V$ and $W$ is a $V$-cylindrical Wiener process. As before, we assume that $C:[0,T]\times V\to V$ is a bounded measurable function, Lipschitz continuous with respect to the second variable, uniformly in $t\in[0,T]$. The main difference, here, is that the nonlinear term does not depend on the time derivative of $y$, and so it is possible to consider a wider space $H$ where the solution $X$ evolves. Indeed, the abstract formulation of the above equation reads as
\begin{align*}
dX(t)=AX(t)dt+B(t,X(t))dt+ GdW_\varepsilon(t), \qquad t\in[0,T], \qquad X(0)=\begin{pmatrix}
\Lambda^{-\frac12}y_0 \\ y_1    
\end{pmatrix}\in H=V_\varepsilon\times V_\varepsilon, 
\end{align*}
where $U=V_\varepsilon:=D(\Lambda^{-\varepsilon})\supset V$ ($0\leq \varepsilon\leq \frac12$ to be appropriately fixed) is the closure of $V$ with respect to the norm  $|\cdot|_{V_\varepsilon}:=|\Lambda^{-\varepsilon}\cdot|_V$, $H=U\times U$, $X=\begin{pmatrix}
X_1 \\ X_2    
\end{pmatrix}$, $y=\Lambda^{-\frac12}X_1$, $\partial_t y=X_2$, $W_\varepsilon=\Lambda^{\varepsilon}W$ is a $U$-cylindrical Wiener process and
\begin{align*}
& A:D(A)\subseteq H\to H, \quad A=\begin{pmatrix}
0 & \Lambda^{\frac12} \\[1mm]
-\Lambda^{\frac12} & -\rho \Lambda^{\alpha}
\end{pmatrix},\qquad J:U\to H, \quad Ju=\begin{pmatrix}
0 \\ \Lambda^{-\varepsilon}u    
\end{pmatrix}, \quad u\in U, \\[1mm] 
& \bar B(t,h)=C_\varepsilon(t,\Lambda^{-\frac12}h_1), \quad  C_\varepsilon(t,\Lambda^{-\frac12}h_1)=\Lambda^\varepsilon C(t,\Lambda^{-\frac12}h_1), \qquad h=\begin{pmatrix}
h_1 \\ h_2    
\end{pmatrix}\in H, \ t\in[0,T], \\[1mm] 
& \widetilde G:[0,T]\to \mathcal L(U), \quad \widetilde G(t)=\sigma(t), \qquad  t\in[0,T], \\[1mm]
&  G:[0,T]\to\mathcal L(U;H), \qquad  G(t)=J \widetilde G(t)
= \begin{pmatrix}
0 \\ \Lambda^{-\varepsilon}\widetilde G(t)  
\end{pmatrix}
= \begin{pmatrix}
0 \\ \widetilde G(t) \Lambda^{-\varepsilon}  
\end{pmatrix}, \quad t\in[0,T].
\end{align*}
We stress that, in the above definitions, $\Lambda$ is always meant as operator on $V_\varepsilon$, i.e., $\Lambda:D(\Lambda)\subset V_\varepsilon\to V^\varepsilon$.

Let us prove that, assuming Hypothesis \ref{ip-sigma-wave}, Hypothesis \ref{hyp:base} are satisfied under an extra assumption.
\begin{proposition}
\begin{enumerate}[\rm(i)]
\item There exists $C_B>0$ such that $|\bar B(x)-\bar B(y)|_U\leq C_B|x-y|_H$ for every $x,y\in H$.
\item If there exists a positive constant $\delta$ such that 
\begin{align}
\label{cond_traccia_gamma}
\mu_n\sim  n^{\delta}, \qquad \delta>\frac{1}{2\varepsilon+\alpha},
\end{align}
then, $e^{tA}G(\tau)\in\mathcal L_2(U;H)$ for every $t\in(0,T]$ and every $\tau\in[0,T]$, and there exist positive constants $C_G$ and $\varepsilon_G\in\left(0,\frac12\right)$ such that \eqref{hyp_base1} is satisfied.
\end{enumerate}
% Assume that there exists a positive constant $\delta$ such that 
% \begin{align*}
% \mu_n\sim  n^{\delta}, \qquad \delta>\frac{1}{2\varepsilon+\alpha}.
% \end{align*}
% Then, Hypothesis \ref{hyp:base} are fulfilled, i.e.,  $e^{tA}G(\tau)\in\mathcal L_2(U;H)$ for every $t\in(0,T]$ and every $\tau\in[0,T]$, and there exist positive constants $C_G$ and  $\varepsilon_G\in\left(0,\frac12\right)$ such that \eqref{hyp_base1} is satisfied.
\end{proposition}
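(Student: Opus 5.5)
For (i) I will reduce everything to norm comparisons on the spectral scale of $\Lambda$. For $x=(x_1,x_2)$, $y=(y_1,y_2)\in H=V_\varepsilon\times V_\varepsilon$, we have $\bar B(t,x)-\bar B(t,y)=\Lambda^{\varepsilon}\big(C(t,\Lambda^{-\frac12}x_1)-C(t,\Lambda^{-\frac12}y_1)\big)$. Its $U=V_\varepsilon$-norm is
\[
|\Lambda^{-\varepsilon}\Lambda^{\varepsilon}(\cdots)|_V=|C(t,\Lambda^{-\frac12}x_1)-C(t,\Lambda^{-\frac12}y_1)|_V.
\]
The Lipschitz property of $C$ bounds this by $L_C|\Lambda^{-\frac12}(x_1-y_1)|_V$. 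I then compare $|\Lambda^{-\frac12}z|_V$ with $|z|_{V_\varepsilon}=|\Lambda^{-\varepsilon}z|_V$. Since $\varepsilon\le\frac12$ and $\Lambda$ has eigenvalues $\mu_n\to\infty$ bounded away from zero (WLOG $\mu_n\ge\mu_1>0$), we get $|\Lambda^{-\frac12}z|_V=|\Lambda^{\varepsilon-\frac12}\Lambda^{-\varepsilon}z|_V\le \mu_1^{\varepsilon-\frac12}|z|_{V_\varepsilon}$. This gives $C_B=L_C\mu_1^{\varepsilon-\frac12}$, and (i) follows. I also need to check that $\bar B$ is well defined as a map into $U$. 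This holds because $C$ takes values in $V\subset V_\varepsilon$, and $|\Lambda^\varepsilon C|_{V_\varepsilon}=|C|_V\le\|C\|_\infty$, so $\bar B$ is bounded as Hypothesis \ref{hyp:base}(i) requires. Since $J$ is bounded, condition \eqref{hyp:base2B} follows with $\gamma_B=0$.

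For (ii) I will follow the proof of Proposition \ref{prop:stima_HS_1}. Since $G(\tau)=J\widetilde G(\tau)$ and $\sup_\tau\|\widetilde G(\tau)\|\le\|\sigma\|_\infty$ by Hypothesis \ref{ip-sigma-wave}, it suffices to estimate $\|e^{tA}J\|_{\mathcal L_2(U;H)}$. Here $J=\begin{pmatrix}0\\ \Lambda^{-\varepsilon}\end{pmatrix}$, so the only change with respect to the previous case is the extra factor $\Lambda^{-\varepsilon}$. Because $A$ commutes with the diagonal operator $\Lambda^{-\varepsilon}$ acting on both components, the coordinatewise computation behind \eqref{damped_stima_norma_HS_2_1} (from \cite[Proposition 5.3]{AddBig24}) applies on each $2\times2$ block spanned by $e_n$. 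It should give
\[
\|e^{tA}J\|^2_{\mathcal L_2(U;H)}\le c\sum_n \mu_n^{-2\varepsilon}\, n^{\delta(\theta-\alpha)}e^{-2\rho n^{\delta\theta}t}\le c\sum_n n^{\delta(\theta-\alpha-2\varepsilon)}e^{-2\rho n^{\delta\theta}t}.
\]
The norms on $U$ and $H$ are both the $V_\varepsilon$-norm, so using orthonormal bases $\mu_n^{\varepsilon}e_n$ of $V_\varepsilon$ produces exactly the block quantities of the case $\varepsilon=0$, multiplied by $\mu_n^{-2\varepsilon}$.

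The remaining step is the same integral comparison and change of variables $y=x^{\delta\theta}t$ as before. It yields a bound $c(1+t^{-2\widetilde\gamma_G})$ with
\[
2\widetilde\gamma_G=-\frac{\theta-\alpha-2\varepsilon}{\theta}-\frac1{\delta\theta}.
\]
We have $2\widetilde\gamma_G<1$ if and only if $\delta(\alpha+2\varepsilon)>1$, which is exactly \eqref{cond_traccia_gamma}. Then I set $\gamma_G=\widetilde\gamma_G$ if it is positive, and any value in $(0,\frac12)$ otherwise, with $C_G=c\|\sigma\|_\infty$. One also has to check that the exponent $\frac1{\delta\theta}+\frac{\theta-\alpha-2\varepsilon}{\theta}-1$ in the $y$-integral is greater than $-1$, so the integral converges near $0$. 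If it is not, I bound the sum directly by $\sum_n n^{\delta(\theta-\alpha-2\varepsilon)}<\infty$, which gives a bounded norm and any admissible $\gamma_G$.

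The main obstacle I expect is justifying the weighted block estimate, i.e.\ confirming that the eigen-decomposition constants (the $\chi_n$ and the non-orthogonality of $H^\pm$) from \cite{AddBig24} are unaffected by passing to $V_\varepsilon$. I would handle this by observing that the map $\Lambda^{\varepsilon}\oplus\Lambda^{\varepsilon}$ is an isometry from $V_\varepsilon\times V_\varepsilon$ onto $V\times V$ that commutes with $A$.
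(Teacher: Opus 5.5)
Your proposal is correct and follows the paper's proof essentially line by line: (i) via $|\Lambda^{-\frac12}z|_V\le\|\Lambda^{\varepsilon-\frac12}\|_{\mathcal L(V)}|z|_{V_\varepsilon}$, and (ii) via the $\varepsilon=0$ block estimate weighted by $\mu_n^{-2\varepsilon}$ and the same substitution $y=x^{\delta\theta}t$, where your isometry $\Lambda^{\varepsilon}\oplus\Lambda^{\varepsilon}$ and your case split on the $y$-exponent add care the paper omits. Fix one sign, which you inherited from the paper's formula for $\widetilde\varepsilon_G$: the prefactor after substitution is $t^{-(\theta-\alpha-2\varepsilon)/\theta-1/(\delta\theta)}$, so $2\widetilde\gamma_G=\frac{\theta-\alpha-2\varepsilon}{\theta}+\frac{1}{\delta\theta}$, and only with this sign is $2\widetilde\gamma_G<1$ equivalent to $\delta(\alpha+2\varepsilon)>1$ (also, in the borderline case $\delta(\theta-\alpha-2\varepsilon)=-1$ your fallback series is harmonic and diverges, but the sum is then $O(1+\log\frac1t)$, which still gives \eqref{hyp_base1} for any $\gamma_G\in(0,\frac12)$).
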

\begin{proof}
To prove (i), we notice that, from the definition of $B$, we get, for every $x,y\in H$,
\begin{align*}
|\bar B(t,x)-\bar B(t,y)|_U
= & |C_\varepsilon(\Lambda^{-\frac12}x_1)-C_\varepsilon(\Lambda^{-\frac12}y_1)|_{V_\varepsilon}
= |C(\Lambda^{-\frac12}x_1)-C(\Lambda^{-\frac12}y_1)|_V \\
\leq & \|C\|_{\rm Lip}|\Lambda^{-\frac12}(x_1-y_1)|_V \leq \|C\|_{\rm Lip}\|\Lambda^{-\frac12+\varepsilon}\|_{\mathcal L(V)}|\Lambda^{-\varepsilon}(x_1-y_1)|_{V} \\
= & \|C\|_{\rm Lip}\|\Lambda^{-\frac12+\varepsilon}\|_{\mathcal L(V)}|x_1-y_1|_{V_\varepsilon}\leq \|C\|_{\rm Lip}\|\Lambda^{-\frac12+\varepsilon}\|_{\mathcal L(V)}|x-y|_{H}, \qquad t\in[0,T].
\end{align*}

The proof of (ii) follows the lines of that of Proposition \ref{prop:stima_HS_1}. We recall that, if $\{e_n:n\in\N\}$  is an orthonormal basis of $U$ consisting of eigenvectors of $\Lambda$ with corresponding simple eigenvalues $(\mu_n)$, the $\{\mu_n^{-\varepsilon} e_n:n\in\N\}$ is an orthonormal basis of $V_\varepsilon$. From the definition of $J$ and arguing as in the proof of \cite[Proposition 5.3]{AddBig24}, it follows that there exists a positive constant $c$ such that for every $t\in(0,T]$ we get
\begin{align}
\label{damped_stima_norma_HS_2}
\left\|e^{tA}\begin{pmatrix}
0 \\ \Lambda^{-\varepsilon}    
\end{pmatrix}\right\|^2_{\mathcal L_2(U;H)}\leq c\sum_{n=1}^\infty n^{-2\delta\varepsilon}n^{\delta(\theta-\alpha)}e^{-2\rho n^{\delta \theta}t},
\end{align}
where $\theta=1-\alpha$ if $\alpha\in\left(\frac12,1\right)$ and $\theta=\alpha$ if $\alpha\in\left[0,\frac12\right]$. From analogous computations as those in the proof of Proposition \ref{prop:stima_HS_1}, we infer that there exists a positive constant, which only depends on $T$, such that
\begin{align*}
\sum_{n=2}^\infty n^{-2\delta\varepsilon}n^{\delta(\theta-\alpha) }e^{-2\rho n^{\delta \theta}t}
\leq & c\left(1+t^{-(\theta-\alpha-2\varepsilon)/\theta-1/(\delta\theta)}\right).
\end{align*}
Let us notice that $-(\theta-\alpha-2\varepsilon)/\theta-1/(\delta\theta)>-1$ if and only if $\delta>\frac1{2\varepsilon+\alpha}$. It follows that under our assumptions there exists a positive constant $c$ such that
\begin{align}
\label{damped_stima_fin_HS_1}
\left\|e^{tA}\begin{pmatrix}
0 \\ \Lambda^{-\varepsilon}   
\end{pmatrix}\right\|^2_{\mathcal L_2(U;H)}\leq c^2t^{-2 \widetilde \varepsilon_G}   
\end{align}
for every $t\in(0,T]$, where $\widetilde \varepsilon_G=\frac12(-(\theta-\alpha-2\varepsilon)/\theta-1/(\delta\theta))$. Since $\sup_{t\in[0,T]}\|\sigma(t)\|_{\mathcal L(V_\varepsilon)}<\infty$ it follows that
\begin{align*}
& \|e^{tA}G(\tau)\|_{\mathcal L^2(U;H)}
\leq c t^{- \widetilde \varepsilon_G}
\end{align*}
for every $t\in(0,T]$, every $\tau\in[0,T]$ and some positive constant $c$. It follows that \eqref{hyp_base1} is fulfilled with
$\varepsilon_G=\widetilde \varepsilon_G$, if $\widetilde \varepsilon_G>0$, and $\varepsilon_G$ any value in $\left(0,\frac12\right)$ if $\widetilde \varepsilon_G\leq 0$.
% Finally, to show that $e^{ta}G(\tau,\cdot)\in \mathcal G^1(H;\mathcal L_2(U;H))$ for every $t\in(0,T]$ and every $\tau\in[0,T]$, it is enough to notice that
% \begin{align*}
% \nabla e^{tA}G(\tau,x)y= e^{tA}\begin{pmatrix}
% 0 \\ \Lambda^{-\varepsilon}\nabla \widetilde G(\tau,x)y    
% \end{pmatrix}
% \end{align*}
% for every $t\in(0,T]$, every $\tau\in[0,T]$ and every $x,y\in H$. Since
% \begin{align*}
% \|\nabla G(\tau,x)y\|_U\leq \|\partial_x\sigma\|_{\infty}+\|\partial_y\sigma\|_{\infty}(\|y_1\|_U+\|y_2\|_U),    
% \end{align*}
% where $y=(y_1,y_2)$, arguing as above we conclude the proof.
\end{proof}

As in the previous section, we are able to construct the function $\widetilde u_t$, by setting, for every $h\in\ H$,
\begin{align}
\label{damped_def_ctrl_2}
\widetilde u_t(\tau)=\begin{cases}
\widetilde G(\tau)^
{-1}(K_1\psi_{t}(\tau)+K_2\psi_{t}'(\tau)), & \tau\in(0,t), \\
0, & \tau=0 \ \textrm{ and }\tau\in[t,T],
\end{cases}    
\end{align}
where $\psi_t$ is the same defined in Subsection \ref{sub:damped_general} and $K_1,K_2$ are the row of the matrix-type operator $K$ given by
\begin{align*}
K:=[J|AJ]^{-1}
= \Lambda^{\varepsilon}\Lambda^{-\frac12}\begin{pmatrix}
\rho\Lambda^{\alpha}  &  \Lambda^{\frac12} \\ {\rm Id} & 0   
\end{pmatrix}
= \begin{pmatrix}
\rho\Lambda^{\alpha+\varepsilon-\frac12}  &  \Lambda^{\varepsilon} \\ \Lambda^{\varepsilon-\frac12} & 0   
\end{pmatrix},
\end{align*}
i.e., $K_1\begin{pmatrix}
a \\ b    
\end{pmatrix}=\rho \Lambda^{\alpha+\varepsilon-\frac12}a+\Lambda^{\varepsilon}b$ for all $a\in D(\Lambda^{(\alpha+\varepsilon-\frac12)\vee 0})$ and $b\in D(\Lambda^\varepsilon)$ and $K_2\begin{pmatrix}
a \\ b    
\end{pmatrix}=\Lambda^{\varepsilon-\frac12} a$ for every $a\in D(\Lambda^{\varepsilon-\frac12})$ The following result holds true.

\begin{proposition}
Assume that condition \eqref{cond_traccia_gamma} is verified and let $T>0$ and $\alpha\in(0,1)$ be fixed. Hence, there exists a positive constant $c$, which only depends on $T$, such that
\begin{align*}
%\label{stima_ctrl_alpha_caso_part}
\|\widetilde u_t\|_{L^2(0,T;U)}    
\leq \frac{C\|a\|_U}{t^\frac12}
\end{align*}
for every $t\in(0,T]$ and every $a\in U$, where $\widetilde u_t$ is defined as in \eqref{damped_def_ctrl_2} with $h=Ja$.
\end{proposition}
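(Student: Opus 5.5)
The plan is to reduce the estimate to the one already established in Proposition \ref{prop:stima_utilde} for the general damped case, via a spectral conjugation by $\Lambda^{\varepsilon}$. Fix $h=Ja=(0,\Lambda^{-\varepsilon}a)^T$ with $a\in U=V_\varepsilon$. Since $A$ commutes with $\mathrm{diag}(\Lambda^{\varepsilon},\Lambda^{\varepsilon})$, we have $\psi_t(\tau)=\phi_t(\tau)e^{\tau A}Ja$ with $e^{\tau A}Ja=e^{\tau A}(0,\Lambda^{-\varepsilon}a)^T$. Writing $b:=\Lambda^{-\varepsilon}a$, the vector $(0,b)^T$ lies in $U\times U$ and has norm $|b|_{V_\varepsilon}$-type controlled by $|a|_U$.

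First step: compute $K_1\psi_t+K_2\psi_t'$. The $\Lambda^{\varepsilon}$ in front of $K$ exactly cancels the $\Lambda^{-\varepsilon}$ in $J$. Consequently,
\begin{align*}
K_1\psi_t(\tau)+K_2\psi_t'(\tau)=\Lambda^{\varepsilon}\Big(\bar K_1\bar\psi_t(\tau)+\bar K_2\bar\psi_t'(\tau)\Big),
\end{align*}
where $\bar K_i$ are the rows of the operator $K$ of Subsection \ref{sub:damped_general} and $\bar\psi_t(\tau)=\phi_t(\tau)e^{\tau A}(0,b)^T$. The function $\bar\psi_t$ is exactly the one used there, with initial direction $(0,b)^T=\bar J b$, where $\bar J u=(0,u)^T$.

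Second step: the norm in $U=V_\varepsilon$ is $|\Lambda^{-\varepsilon}\cdot|_V$. Hence the $L^2(0,T;U)$-norm of $\widetilde G^{-1}\Lambda^{\varepsilon}(\cdots)$ equals, up to $\sup_\tau\|\sigma(\tau)^{-1}\|$, the $L^2(0,T;V)$-norm of $\bar K_1\bar\psi_t+\bar K_2\bar\psi_t'$. For this, $\widetilde G^{-1}=\sigma^{-1}$ must commute with $\Lambda^{\varepsilon}$, or at least be bounded on $V_\varepsilon$; this is used already in the proof of the previous proposition, where $\sigma(t)\Lambda^{-\varepsilon}=\Lambda^{-\varepsilon}\sigma(t)$ is written.

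Third step: the bound in $V$ is the content of \cite[Theorems 5.4 \& 5.6]{AddBig24} and \cite[Theorems 4.6 \& 4.7]{AddBig26}, as recalled in Proposition \ref{prop:stima_utilde}. It gives $\le Ct^{-1/2}|\bar Jb|=Ct^{-1/2}|b|_V$ in the $V$-setting. Since $|b|_V=|\Lambda^{-\varepsilon}a|_V=|a|_{V_\varepsilon}=|a|_U$, the claim follows.

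The main obstacle is that Proposition \ref{prop:stima_utilde} is stated under \eqref{condizione_traccia}, while here only \eqref{cond_traccia_gamma} is assumed. I would check that the control estimate in \cite{AddBig24} uses only the spectral structure of $A$, namely the eigenvalues $\lambda_n^\pm$, the eigenvectors $\Phi_n^\pm$ and the nonorthogonality constants $\chi_n$, and not the trace condition. The trace condition is needed only for the Hilbert--Schmidt property of Hypothesis \ref{hyp:base}. Concretely, I would expand $\bar\psi_t$ mode by mode: on the $n$-th mode $e^{\tau A}$ acts as a $2\times2$ matrix, and $\bar K$ acts as the matrix $\begin{pmatrix}\rho\mu_n^{\alpha-1/2}&1\\ \mu_n^{-1/2}&0\end{pmatrix}$. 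The task is then to bound
\begin{align*}
\int_0^t\big|\phi_t\,\bar K e^{\tau A_n}(0,b_n)^T+\bar K(\phi_t e^{\tau A_n})'(0,b_n)^T\big|^2d\tau\le Ct^{-1}|b_n|^2
\end{align*}
uniformly in $n$, and sum over $n$. The uniformity in $n$, in particular controlling $\rho\mu_n^{\alpha-1/2}\lambda_n^\pm$-type terms when $\alpha>\tfrac12$ through the exponential decay $e^{\tau\lambda_n^\pm}$ and the choice of $\phi_t$, is the delicate point. I would take it from the cited works rather than redo it.
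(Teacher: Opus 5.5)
Your proposal is correct and is essentially the paper's argument. The paper likewise reduces to the $\varepsilon=0$ expressions: it writes $K_1\psi_t$ and $K_2\psi_t'$ in the eigenbasis, where the $\varepsilon$-weights drop out, and then defers to the $\gamma=0$ computations of \cite{AddBig26,AddBig24}. Your factorization $K=\Lambda^{\varepsilon}\bar K$, $Ja=\bar J\Lambda^{-\varepsilon}a$, combined with $|\Lambda^{\varepsilon}w|_{V_\varepsilon}=|w|_V$ and $|\Lambda^{-\varepsilon}a|_V=|a|_{V_\varepsilon}$, expresses the same cancellation at the operator level and for all $\alpha\in(0,1)$ at once.

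The mismatch you flag between \eqref{condizione_traccia} and \eqref{cond_traccia_gamma}, which the paper passes over, is harmless, and the ``delicate point'' is elementary. The relevant scalar quantity is $\bar K_1\psi_t+\bar K_2\psi_t'$, not the vector expression in your last display. Let $\lambda_n^\pm$ be the eigenvalues of the $n$-th $2\times 2$ block of $A$, so that $\lambda_n^++\lambda_n^-=-\rho\mu_n^\alpha$. Then the $n$-th component equals $b_n\bigl[\phi_t(\tau)(e^{\lambda_n^+\tau}+e^{\lambda_n^-\tau})+\phi_t'(\tau)\frac{e^{\lambda_n^+\tau}-e^{\lambda_n^-\tau}}{\lambda_n^+-\lambda_n^-}\bigr]$. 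Since $\mathrm{Re}\,\lambda_n^\pm\le 0$, this is bounded by $|b_n|\bigl(2\phi_t(\tau)+\tau|\phi_t'(\tau)|\bigr)$. With $\phi_t\le Ct^{-1}$ and $|\phi_t'|\le Ct^{-2}$ for the usual choice of $\phi_t$, you get an $L^2(0,t)$ bound $Ct^{-1}|b_n|^2$ uniformly in $n$, with no trace condition at all.
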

\begin{proof}
The proof is similar to those of [Theorems 4.6 \& 4.7]\cite{AddBig26} (see also \cite[Theorems 5.4 \& 5.6]{AddBig24}). Here, we just sketch the main modifications in the case $\alpha\in \left(0,\frac12\right]$.

in \cite[Theorem 4.6]{AddBig26}, it has been proved that
\begin{align}
K_1\psi_t(\tau)
= & -\sum_{k=1}^{+\infty}\mu_k^\varepsilon[e^{\lambda_k^+\tau }\langle h^+,\Phi_k^+\rangle_H\lambda_k^-e_k
+e^{\lambda_k^-\tau }\langle h^-,\Phi_k^- \rangle_H \chi_k\lambda_k^+e_k]
\label{forma_K_1psi_t}
\end{align}
and
\begin{align}
K_2\psi_t'(\tau )
= & \sum_{k=1}^{+\infty}\mu_k^{\varepsilon}[\phi'_t(\tau )(e^{\lambda_k^+\tau }\langle h^+,\Phi_k^+\rangle_H+e^{\lambda_k^-\tau }\langle h^-,\Phi_k^-\rangle_H\chi_k)e_k \notag \\
& +\phi_t(\tau )(\lambda_k^+e^{\lambda_k^+\tau }\langle h^+,\Phi_k^+\rangle_H+\lambda_k^-e^{\lambda_k^-\tau }\langle h^-,\Phi_k^-\rangle_H\chi_k)e_k]
\label{forma_K_2psi_t'}
\end{align}
for every $\tau \in(0,t]$. Further, for every $k\in\N$,
\begin{align}
\label{exp_Ga^+Ga^-}
\langle (Ja)^+,\Phi_k^+\rangle_H
= & \frac{-\mu_k^\varepsilon a_k}{(\lambda_k^--\lambda_k^+)\|e_k\|_V},\qquad
\langle (J_2a)^-,\Phi_k^-\rangle_H
=  \frac{-\mu_k^\varepsilon a_k}{\chi_k(\lambda_k^+-\lambda_k^-)\|e_k\|_V}.    
\end{align}
Replacing the above expressions in \eqref{forma_K_1psi_t} and in \eqref{forma_K_2psi_t'}, we infer that
\begin{align*}
K_1\psi_t(\tau)
= \phi_t(\tau)\sum_{k=1}^{+\infty}\frac{a_k}{(\lambda_k^--\lambda_k^+)\|e_k\|_V}[e^{\lambda_k^+\tau}\lambda_k^--e^{\lambda_k^-\tau}\lambda_k^+]e_k, \qquad \tau\in(0,t),
\end{align*}
and
\begin{align*}
K_2\psi_t'(\tau)
= & -\sum_{k=1}^{+\infty}\frac{a_k}{(\lambda_k^--\lambda_k^+)\|e_k\|_V}[\phi_t'(\tau)(e^{\lambda_k^+\tau}-e^{\lambda_k^-\tau})e_k+\phi_t(\tau)(\lambda_k^+ e^{\lambda_k^+\tau}-\lambda_k^-e^{\lambda_k^-\tau})e_k] \end{align*}
for every $\tau\in(0,t)$. The thesis now follows from computations in the proof of the quoted theorems, with $\gamma=0$. 
\end{proof}

As in Subsection \ref{sub:damped_general}, we recover the desired estimate.
\begin{proposition}
Estimate \eqref{stima_norma_L2_utilde} is verified.
\end{proposition}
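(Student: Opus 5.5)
The statement asks for estimate \eqref{stima_norma_L2_utilde} in the special-drift setting, i.e. with $H=V_\varepsilon\times V_\varepsilon$, $J u=(0,\Lambda^{-\varepsilon}u)$ and the control $\widetilde u_t$ of \eqref{damped_def_ctrl_2}. The plan is the same reduction used for the analogous proposition in Subsection \ref{sub:damped_general}: first treat the case $s=0$ directly from the preceding proposition, then obtain the case $s>0$ by time translation. As there, the natural choice is $\widetilde u_{Ja,s,x,r}$ independent of $x$, since $B=0$ in the identification step and the control only depends on $A$, $G$, $h$ and the time interval.

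\textbf{Case $s=0$.} Set $\widetilde u_{Ja,0,x,r}:=\widetilde u_r$, where $\widetilde u_r$ is built from \eqref{damped_def_ctrl_2} with $h=Ja$ and $t=r$. The formal computation following \eqref{damped_def_ctrl} gives \eqref{link_gat_der_mall_der} for this control; the domain issues are handled exactly as in \cite{AddBig26,AddBig24}, and this is implicit in the preceding proposition. That proposition then yields
\[
\|\widetilde u_r\|_{L^2(0,r;U)}\le\|\widetilde u_r\|_{L^2(0,T;U)}\le \frac{C\|a\|_U}{r^{1/2}},
\]
because $\widetilde u_r$ vanishes on $[r,T]$. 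This is \eqref{stima_norma_L2_utilde} with $s=0$. Here the bound is already stated in terms of $\|a\|_U$, so no comparison between $\|Ja\|_H$ and $\|a\|_U$ is needed (in this setting one has $\|Ja\|_H=|\Lambda^{-\varepsilon}a|_{V_\varepsilon}\le C\|a\|_U$ anyway).

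\textbf{Case $0<s<r<T$.} Define the shifted control
\[
\widetilde u_{Ja,s,x,r}(\tau):=\widetilde G(\tau)^{-1}\widetilde G_s(\tau-s)\,\widehat u_{r-s}(\tau-s),\qquad \tau\in(s,r),
\]
and $0$ elsewhere. Here $\widehat u_{r-s}$ is the control of \eqref{damped_def_ctrl_2} constructed with the shifted diffusion $\widetilde G_s(\cdot):=\widetilde G(\cdot+s)$. Equivalently, one simply writes $\widetilde G(\tau)^{-1}(K_1\psi_{r-s}(\tau-s)+K_2\psi'_{r-s}(\tau-s))$. The change of variables $\tau=s+\theta$ gives
\[
\int_s^r e^{(r-\tau)A}G(\tau)\widetilde u(\tau)\,d\tau=\int_0^{r-s}e^{(r-s-\theta)A}J\bigl(K_1\psi_{r-s}(\theta)+K_2\psi'_{r-s}(\theta)\bigr)\,d\theta=e^{(r-s)A}Ja,
\]
by the same integration by parts and \eqref{damped_inv_matr_op}. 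This identity uses only the autonomy of $A$ and the fact that $\widetilde G$ enters solely through $\widetilde G\widetilde G^{-1}=\mathrm{Id}$.

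\textbf{The norm bound.} Because the $\widetilde G$ factor cancels in the identity above, the $L^2$ estimate is the one of the preceding proposition with $t=r-s$. The factor $\widetilde G(\tau)^{-1}$ contributes only $\sup_\tau\|\widetilde G(\tau)^{-1}\|\le C$ by Hypothesis \ref{ip-sigma-wave}, uniformly in $s$. Hence $\|\widetilde u_{Ja,s,x,r}\|_{L^2(s,r;U)}\le C\|a\|_U(r-s)^{-1/2}$, with $C$ depending only on $T$, $\rho$, $\alpha$, $\varepsilon$ and $\sigma$.

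\textbf{Main point to check.} The only delicate step is confirming that the constant in the preceding proposition depends on $\widetilde G$ only through $\sup\|\widetilde G^{-1}\|$ and not on its time profile. Inspecting \eqref{forma_K_1psi_t}–\eqref{exp_Ga^+Ga^-}, $K_1\psi_t$ and $K_2\psi_t'$ do not involve $\widetilde G$ at all, so this holds and the translation argument is legitimate.
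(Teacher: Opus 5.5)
Your proposal is correct and follows the paper's argument. The case $s=0$ is read off from the preceding control estimate, using $\|Ja\|_H\le C\|a\|_U$ (an equality in the setting of Subsection~\ref{sub:damped_general}), and the case $s>0$ follows by time translation. You also make the translation step explicit, which the paper only asserts as ``invariance under time translation'': you write down the shifted control $\widetilde G(\tau)^{-1}\bigl(K_1\psi_{r-s}(\tau-s)+K_2\psi_{r-s}'(\tau-s)\bigr)$, note that $\widetilde G$ cancels in the identity, and observe that the constant depends on $\widetilde G$ only through $\sup_\tau\|\widetilde G(\tau)^{-1}\|$.
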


In this case, we are able to consider stochastic damped wave equations up to dimension $3$.
\begin{example}
Let $\Lambda$ be the realization of the Laplace operator in $L^2([0,\pi]^d)$ with homogeneous Dirichlet boundary conditions with $d=1,2,3$. and assume that condition \eqref{cond_traccia_gamma} is fulfilled. In this case, $\delta=\frac2d$ and so condition \eqref{cond_traccia_gamma}, and consequently our results, are fulfilled in the following situations:
\begin{description}
\item[$d=1$] in dimension $1$, for every $\alpha\in(0,1)$ there exists $\varepsilon\in\left(0,\frac12\right)$ such that condition \eqref{cond_traccia_gamma} is verified: it is enough to consider $\varepsilon>\frac{1-2\alpha}{4}$ $($notice that, if $\alpha>\frac12$, then we can choose  $\varepsilon=0$, accordingly with Example \ref{example:damped_gen}$)$.
\item [$d=2$] In dimension $2$, condition \eqref{cond_traccia_gamma} reads as $\varepsilon>\frac{1-\alpha}{2}$, which again implies that for every $\alpha\in(0,1)$ there exists $\varepsilon\in\left(0,\frac12\right)$ such that \eqref{cond_traccia_gamma} is satisfied.
\item [$d=3$] Finally, we notice that in dimension $3$ condition \eqref{cond_traccia_gamma} is equivalent to $\varepsilon>\frac{3-2\alpha}{4}$, which implies that for every $\alpha\in\left(\frac12,1\right)$ there exists $\varepsilon\in\left(0,\frac12\right)$ such that \eqref{cond_traccia_gamma} holds true.
\end{description}

In this situation, we are also able to show that Hypothesis \ref{hyp:operatorG1} is satisfied with $J_1=\begin{pmatrix}
0 \\ {\rm Id}    
\end{pmatrix}$ and $S=\Lambda^{-\varepsilon}$, obtaining the differentialìbility of $Y_s^{s,x}$ along the directions of $\begin{pmatrix}
    0 \\ a
\end{pmatrix}$ with $a\in U$. The price to pay is a worst behaviour at $0$ of the control $\widetilde u_t$, as the following result shows.

\begin{proposition}
Assume that condition \eqref{cond_traccia_gamma} is verified and let $T>0$ and $\alpha\in(0,1)$ be fixed. Hence, there exists a positive constant $c$, which only depends on $T$, such that
\begin{align}
\label{stima_ctrl_alpha_caso_part}
\|\widetilde u_t\|_{L^2(0,T;U)}    
\leq 
\begin{cases}
\displaystyle \frac{C\|a\|_U}{t^{\frac12+\frac\varepsilon\alpha}}
, & \alpha\in\left(0,\frac12\right], \\[2mm]
\displaystyle \frac{C\|a\|_U}{t^{\frac12+\frac\varepsilon{1-\alpha}}}, & \alpha\in\left[\frac12,1\right),
\end{cases}
\end{align}
for every $t\in(0,T]$ and every $a\in U$, where $\widetilde u_t$ is defined as in \eqref{damped_def_ctrl_2} with $h=J_1a$. In particular, estimate \eqref{stima_norma_L2_utilde_1} is satisfied with $\beta=\frac\varepsilon\alpha$ and $\varepsilon<\frac\alpha2$ if $\alpha\in\left(0,\frac12\right]$ and with $\beta=\frac\varepsilon{1-\alpha}$ and $\varepsilon<\frac{1-\alpha}2$ if $\alpha\in\left[\frac12,1\right)$. 
\end{proposition}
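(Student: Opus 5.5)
The plan is to write $\widetilde u_t$ explicitly in the eigenbasis and estimate its $L^2$ norm mode by mode, following the scheme of the previous proposition (case $h=Ja$). The new feature is the factor $\mu_k^{\varepsilon}$ coming from $K$. With $h=J_1a=\begin{pmatrix}0\\ a\end{pmatrix}$, the coefficients $\langle (J_1a)^\pm,\Phi_k^\pm\rangle_H$ are the analogues of \eqref{exp_Ga^+Ga^-} \emph{without} the factor $\mu_k^{-\varepsilon}$ that $J=J_1\Lambda^{-\varepsilon}$ produced. Hence in the expansions of $K_1\psi_t(\tau)$ and $K_2\psi_t'(\tau)$ the $k$-th coefficient is multiplied by $\mu_k^{\varepsilon}$ relative to the case $h=Ja$:
\begin{align*}
K_1\psi_t(\tau)+K_2\psi_t'(\tau)=\sum_{k}\frac{\mu_k^{\varepsilon}a_k}{(\lambda_k^--\lambda_k^+)}\Big[\phi_t(\tau)\big(e^{\lambda_k^+\tau}\lambda_k^--e^{\lambda_k^-\tau}\lambda_k^+-\lambda_k^+e^{\lambda_k^+\tau}+\lambda_k^-e^{\lambda_k^-\tau}\big)-\phi_t'(\tau)\big(e^{\lambda_k^+\tau}-e^{\lambda_k^-\tau}\big)\Big]e_k,
\end{align*}
with norms taken in $U=V_\varepsilon$. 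Since $\widetilde G^{-1}$ is bounded by Hypothesis \ref{ip-sigma-wave}, it suffices to bound $\int_0^t$ of the squared norm of this expression.

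The key step is to trade the extra factor $\mu_k^{\varepsilon}$ for a power of $t^{-1}$ using the exponential decay of the modes. For $\alpha\in(0,\frac12]$ the relevant eigenvalue asymptotics give $\Re\lambda_k^\pm\sim -c\mu_k^{\alpha}$. For $\alpha\in[\frac12,1)$ the slow branch gives $\lambda_k^+\sim -c\mu_k^{1-\alpha}$, while the fast branch $\lambda_k^-\sim-c\mu_k^\alpha$ decays even faster. Put $\theta=\alpha$ in the first case and $\theta=1-\alpha$ in the second. The elementary inequality
\[
\mu_k^{\varepsilon}e^{-c\mu_k^\theta\tau}\le C\tau^{-\varepsilon/\theta}e^{-\frac c2\mu_k^\theta\tau}
\]
then lets me bound each mode by $\tau^{-\varepsilon/\theta}$ times the corresponding term of the $\varepsilon=0$ computation, with $c$ halved. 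That computation is exactly the one quoted from \cite[Theorems 4.6 \& 4.7]{AddBig26} behind Proposition \ref{prop:stima_utilde}, which yields $t^{-1/2}$ with constants depending only on $T$.

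The main obstacle is the weight $\tau^{-2\varepsilon/\theta}$, which is singular at $\tau=0$. The bound $\mu_k^\varepsilon e^{-c\mu_k^\theta\tau}\lesssim\tau^{-\varepsilon/\theta}$ is useless near $\tau=0$, so I would combine it with the cutoff. Choosing $\phi_t(\tau)=\Phi_t(\tau)$ (vanishing to second order at $0$, so $\phi_t\lesssim \tau^2/t^3$ and $|\phi_t'|\lesssim \tau/t^3$ near $0$), the products $\phi_t\tau^{-\varepsilon/\theta}$ and $\phi_t'\tau^{-\varepsilon/\theta}$ stay square integrable provided $\varepsilon/\theta<\frac12$, which holds under the stated restrictions $\varepsilon<\alpha/2$, resp. $\varepsilon<(1-\alpha)/2$. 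If the $\phi_t$ used earlier is not of this form, I would redefine it accordingly; the identity \eqref{link_gat_der_mall_der} only needs $\int\phi_t=1$ and vanishing at the endpoints. Scaling $\tau=ts$ then gives
\[
\|\widetilde u_t\|^2_{L^2(0,T;U)}\le C\|a\|_U^2\,t^{-1-2\varepsilon/\theta},
\]
which is \eqref{stima_ctrl_alpha_caso_part}.

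I must also check that the term $\phi_t(\tau)(\dots)$ without a derivative does not lose the cancellation $\lambda_k^--\lambda_k^+$ in the denominator; I would handle it as in the quoted theorems. Finally, time-translation invariance, as in the previous proposition, gives \eqref{stima_norma_L2_utilde_1} with $\beta=\varepsilon/\theta<\frac12$.
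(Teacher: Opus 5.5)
The paper states this proposition without proof. The only indication is the sketch given for $h=Ja$, which defers to the modal computations of the quoted theorems.

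Your argument is the natural completion of that sketch. Since $J_1a=J\Lambda^{\varepsilon}a$, each mode carries an extra $\mu_k^{\varepsilon}$, and you trade it for $\tau^{-\varepsilon/\theta}$ using the decay rate $\mu_k^\theta$ of the slow branch. This is correct for $\varepsilon<\theta/2$, which is what the ``in particular'' part, and hence Hypothesis \ref{hyp:operatorG1}, needs.

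You can make the ``$\varepsilon=0$ bound with $c$ halved'' step explicit, and remove your worry about the denominator $\lambda_k^--\lambda_k^+$, by simplifying your bracket. Up to the normalisation of $e_k$, the $k$-th component of $\widetilde G(\tau)\widetilde u_t(\tau)$ is exactly
\[
\mu_k^{\varepsilon}a_k\Big[\phi_t(\tau)\big(e^{\lambda_k^+\tau}+e^{\lambda_k^-\tau}\big)+\phi_t'(\tau)\,\frac{e^{\lambda_k^+\tau}-e^{\lambda_k^-\tau}}{\lambda_k^+-\lambda_k^-}\Big]e_k .
\]
The divided difference is bounded by $\tau e^{-c\mu_k^\theta\tau}$:
\begin{itemize}
\item for complex conjugate eigenvalues it equals $e^{\tau\,\mathrm{Re}\lambda_k^+}\sin(\tau\,\mathrm{Im}\lambda_k^+)/\mathrm{Im}\lambda_k^+$;
\item for real eigenvalues it follows from the mean value theorem.
\end{itemize}
So each mode contributes at most $C\int_0^t\tau^{-2\varepsilon/\theta}\big(\phi_t^2+\tau^2(\phi_t')^2\big)\,d\tau$. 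For $\varepsilon<\theta/2$ the weight is integrable at $0$, so the cutoff need not vanish there at all. The bounds $|\phi_t|\lesssim t^{-1}$ and $|\phi_t'|\lesssim t^{-2}$ already give $Ct^{-1-2\varepsilon/\theta}$; your ``main obstacle'' is not one.

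The genuine gap is the range of $\varepsilon$. The proposition asserts the displayed estimate under \eqref{cond_traccia_gamma} alone. The restriction $\varepsilon<\theta/2$ enters only to guarantee $\beta<\frac12$, whereas you use it as a hypothesis. For $\varepsilon\ge\theta/2$ the order of vanishing of the cutoff at $0$ is decisive. Writing $\phi_t(\tau)=t^{-1}\phi(\tau/t)$ and $M=\mu_k^\theta t$, one has
\[
\mu_k^{2\varepsilon}\int_0^t e^{-2c\mu_k^\theta\tau}\big(\phi_t^2+\tau^2(\phi_t')^2\big)\,d\tau=t^{-1-\frac{2\varepsilon}{\theta}}M^{\frac{2\varepsilon}{\theta}}\int_0^1e^{-2cMs}\big(\phi(s)^2+s^2\phi'(s)^2\big)\,ds .
\]
If $\phi$ vanishes to infinite order at $0$, this is bounded uniformly in $M>0$ for every $\varepsilon$. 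So a flat cutoff yields the statement as written.

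With your choice $\Phi_t$, which vanishes only to second order, the conclusion is different. One checks that for large $k$ the $k$-th mode contributes a quantity comparable to $\mu_k^{2\varepsilon-5\theta}t^{-6}$, since the two terms of the bracket do not cancel. Testing with $a=\mu_k^{\varepsilon}e_k$, the display then fails whenever $2\varepsilon>5\theta$. Condition \eqref{cond_traccia_gamma} allows this, for instance $d=1$, $\alpha=0.1$, $\varepsilon=0.3$.

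For the application to Hypothesis \ref{hyp:operatorG1} your restricted version is enough. For the full statement, replace $\Phi_t$ by a flat cutoff.
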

\end{example}

%%%%%%%%%%%%%%%%%%%%%%

\end{document}